\definecolor{dblue}{rgb}{0,0,.6}
\numberwithin{equation}{section}
\newtheorem{theorem}{Theorem}[section]
\theoremstyle{plain} %italic
\newtheorem{corollary}[theorem]{Corollary}
\newtheorem{definition}[theorem]{Definition}
\newtheorem{lemma}[theorem]{Lemma}
\newtheorem{proposition}[theorem]{Proposition}
\theoremstyle{definition} %roman
\newtheorem{example}[theorem]{Example}
\newtheorem{remark}[theorem]{Remark}
\newcommand{\del}{\partial}
\newcommand{\Z}{\mathbb Z}
\newcommand{\Q}{\mathbb Q}
\newcommand{\A}{\mathbb A}
\newcommand{\C}{\mathbb C}
\newcommand{\R}{\mathbb R}
\newcommand{\CP}{\mathbb P}
\newcommand{\im}{\operatorname{im}}
\newcommand{\Hom}{\operatorname{Hom}}
\newcommand{\id}{\operatorname{id}}
\newcommand{\Spec}{\operatorname{Spec}}
\newcommand{\pr}{\operatorname{pr}}
\newcommand{\CH}{\operatorname{CH}}
\newcommand{\sing}{\operatorname{sing}}
\newcommand{\cl}{\operatorname{cl}} 
\newcommand{\Frac}{\operatorname{Frac}}
\newcommand{\dashedlongrightarrow}{\xymatrix@1@=15pt{\ar@{-->}[r]&}}
\renewcommand{\longrightarrow}{\xymatrix@1@=15pt{\ar[r]&}}
\renewcommand{\mapsto}{\xymatrix@1@=15pt{\ar@{|->}[r]&}}
\renewcommand{\twoheadrightarrow}{\xymatrix@1@=15pt{\ar@{->>}[r]&}}
\newcommand{\hooklongrightarrow}{\xymatrix@1@=15pt{\ar@{^(->}[r]&}}
\newcommand{\congpf}{\xymatrix@1@=15pt{\ar[r]^-\sim&}}
\renewcommand{\cong}{\simeq}
\begin{document}   
\title[Rationality of hypersurfaces]{Rationality of hypersurfaces}

\author{Stefan Schreieder} 
\address{Leibniz University Hannover, Institute of Algebraic Geometry, Welfengarten 1, 30167 Hannover, Germany.}
\email{schreieder@math.uni-hannover.de}

\date{\today}
%\date{June 18, 2024}
\subjclass[2020]{primary 14J70, 14E08; secondary 05B35 14M20, 14C25} %; secondary 14J45} 
%
% 14J70 Hypersurfaces
% 14J45 Fano varieties
% 14J10 Families, moduli, classification: algebraic theory
% 	14J35   	$4$-folds
% 14M20   	Rational and unirational varieties
% 	14M22   	Rationally connected varieties
% 14D06   	Fibrations, degenerations
%  	14E08   	Rationality questions
% 	14C25   	Algebraic cycles
%  	14M10   	Complete intersections
% 	14C30   	Transcendental methods, Hodge theory [See also 14D07, 32G20, 32J25, 32S35], Hodge conjecture

\keywords{Hypersurfaces, Rationality Problem, Retract Rationality, Decomposition of the Diagonal, Matroids.}

\maketitle

\begin{abstract} 
We survey recent developments on rationality problems for algebraic varieties, with a particular emphasis on cycle-theoretic and combinatorial methods and their applications to hypersurfaces. 
\end{abstract}

\section{Introduction} 
 
A variety $X$ of dimension $n$ over a field $k$ is called \emph{rational} if it is birational to projective space $\mathbb P^n_k$; equivalently, its function field $k(X)$ is a purely transcendental extension of $k$.  
This means that away from a lower-dimensional subset, the solutions of the defining equations of $X$ can be parametrized bijectively by rational functions in $k(t_1,\dots ,t_n)$. 
The rationality problem asks whether a given variety $X$ is rational or not.
This is a classical problem in algebraic geometry, which boils down to a basic solvability property for a given set of algebraic equations.

There are several weaker versions of this, including 
\emph{unirationality}, \emph{retract rationality}, and \emph{stable rationality}.  
A variety $X$ is unirational if there is a dominant rational map $g\colon \mathbb P^N_k \dashrightarrow X$; over algebraically closed fields this still ensures that almost all solutions of the defining equations can be parametrized by rational functions (though not uniquely) but this typically fails over non-closed fields.  
We say that $X$ is retract rational if there is a unirational parametrization $g$ with a rational section.
%Equivalently, there are open dense subsets $U\subset X$ and $V\subset \CP^N_k$ and a morphism $g\colon V\to U$ which is surjective on $L$-rational points for all field extensions $L$ of $k$: $V(L)\twoheadrightarrow U(L)$ for all $L/k$.
%Equivalently, there are dense open subsets $V\subset \CP^N_k$ and $U\subset X$, and a morphism $g\colon V\to U$ which is surjective on $L$ rational points for all field extensions $L/k$.
%In other words, 
Equivalently, 
there are rational functions which, for all field extensions $L/k$,  allow to parametrize all $L$-rational points of $X$ outside a lower dimensional subset, see Definition \ref{def:retract}. 
%This implies that, over any field $k$ (and after any field extension), all $k$-points of $X$ are parametrized by rational functions outside a lower-dimensional subset.
Finally, $X$ is stably rational if $X \times \mathbb P^m$ is rational for some $m \ge 0$.  
One then has the following sequence of implications
\[
\text{rational} \;\Longrightarrow\; \text{stably rational} \;\Longrightarrow\; \text{retract rational} \;\Longrightarrow\; \text{unirational},
\]
with strictness of the first and last implications proven in \cite{BCTSS} and \cite{AM}, respectively. 
Strictness in the middle is open over algebraically closed fields, but established over some non-closed fields in \cite{endo-miyata}.

A natural and historically central testing ground for these questions is the case of smooth hypersurfaces $X \subset \mathbb P^{n+1}_k$ of degree $d$, cf.\ \cite{kollar-survey}.  
If $d \geq n+2$, then $K_X=\mathcal O_X(d-n-2)$ has nontrivial sections, so $X$ is far from rational (it is not even separably rationally connected).  
The interesting range is $d \leq n+1$, where $X$ is Fano and hence rationally chain connected \cite{campana,KMM}, i.e.\ it contains many rational curves and hence looks like a rational variety in some sense.
Moreover, if $k=\C$ and $d!\leq \log_2n$, then $X$ is unirational, see \cite{HMP,BR}.

To establish that a variety satisfies one of the above rationality properties, one typically has to provide an explicit construction.  
Conversely, to prove failure of such a property, one must exhibit a nontrivial obstruction.  
Moreover, if the base change of a variety $X$ to the algebraic closure of $k$ is not (stably/retract) rational, then neither is $X$, nor the base change $X_L$ to any field extension $L/k$.  
Thus the strongest irrationality results are obtained over algebraically closed fields; in characteristic zero, this allows us to work over the ground field $\C$.

For smooth hypersurfaces $X \subset \mathbb P^{n+1}_\C$ with $d \leq n+1$, the picture is as follows. 
For $d\leq 2$, such hypersurfaces are always rational and the result goes back to the ancient Greeks.
In 1866, Clebsch showed that smooth cubic surfaces over $\C$ are rational, settling the case $n=2$. 
In the 1970s, Clemens--Griffiths \cite{clemens-griffiths} and Iskovskikh--Manin \cite{IM} showed that cubic and quartic threefolds are not rational, via intermediate Jacobians and birational rigidity, respectively.  
Birational rigidity was later developed further and culminated in de Fernex’s result \cite{deF1} that smooth hypersurfaces of degree $d=n+1$ are irrational in all dimensions $n\geq 3$ (see also \cite{Pu1,Pu2}).  
A different approach, using mixed characteristic degenerations and differential forms in positive characteristic, was introduced by Koll\'ar \cite{kollar} in 1995, who proved that very general hypersurfaces of degree $d \geq 2\lceil (n+3)/3 \rceil$ are not ruled, hence not rational.  
This settled, for very general hypersurfaces, roughly one third of the remaining open cases.

In the last decade, exciting further progress has been made in this area.  
A distinctive feature of the subject is the variety of methods that have been used successfully by different sets of authors. 
These include: 
\begin{enumerate}[label={(\arabic*)}]
\item algebraic cycles, see \cite{voisin,CTP,Sch-duke}, \cite{voisin-JEMS,shen}, and \cite{Pavic-Sch,Lange-Sch};\label{item:intro:alg-cycles}
\item unramified cohomology and quadratic forms, see \cite{CTO,HPT,Sch-JAMS};\label{item:intro:unramified}
\item motivic integration and weak factorization, see \cite{NS,KT};\label{item:intro:motivic-integration}
\item tropical and discrete geometry, see \cite{NO,moe};\label{item:intro:tropical-discrete-geo}
\item the combinatorial theory of regular matroids, see \cite{gwena,EGFS};\label{item:intro:matroids}
\item mirror symmetry and quantum cohomology, see \cite{KKPY}.\label{item:intro:quantum-cohomology}
\end{enumerate}

We survey here the irrationality results for hypersurfaces obtained via these methods.  
Totaro \cite{totaro} used the cycle-theoretic approach \ref{item:intro:alg-cycles} of Voisin \cite{voisin} and Colliot-Th\'el\`ene--Pirutka \cite{CTP}, to upgrade Koll\'ar’s earlier result \cite{kollar} from rationality to stable and retract rationality, giving the linear bound $d \geq 2\lceil (n+2)/3 \rceil$. 
Using the improvement from \cite{Sch-duke} and combining it with \ref{item:intro:unramified}, we obtained 
a logarithmic bound as follows.

\begin{theorem}[\cite{Sch-JAMS}] \label{thm:JAMS-intro}
%Let $k$ be a field of characteristic different from $2$ and 
A very general hypersurface $X\subset \CP^{n+1}_k$ of dimension $n\geq 3$ and degree $d\geq (\log_2 n)+2$ over a field of characteristic different from $2$ is not retract rational.
%neither stably nor retract rational. 
\end{theorem}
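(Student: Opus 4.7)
My plan is to combine the cycle-theoretic decomposition-of-the-diagonal method with an explicit construction of a special hypersurface carrying a nontrivial mod-$2$ unramified cohomology class. Retract rationality of a smooth projective variety $X$ implies an integral decomposition of its diagonal, which reduces modulo $2$ and thereby forces $H^{i}_{nr}(k(X)/k,\Z/2)=0$ for every $i\geq 1$. So it suffices to produce, for $n\geq 3$ and $d\geq\lceil\log_2 n\rceil+2$, a smooth model of a suitable degeneration of a degree-$d$ hypersurface in $\PP^{n+1}_k$ carrying a nonzero unramified class in some positive degree.

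The deformation input is the refined specialization theorem of \cite{Sch-duke}. I would choose a flat projective family $\mathcal{X}\to B$ over a smooth curve whose general fibre is a very general smooth hypersurface of degree $d$ and whose special fibre $X_0$ is a carefully designed singular hypersurface admitting an alteration $\tau\colon\widetilde{X}_0\to X_0$. The theorem of \cite{Sch-duke} then asserts that the very general fibre is not retract rational, provided some class $\alpha\in H^{r}_{nr}(\widetilde{X}_0,\Z/2)$ has a nontrivial restriction away from the $\tau$-exceptional locus. The crucial improvement over \cite{voisin,CTP} is that $\tau$ need not be universally $\CH_0$-trivial; the condition to check is purely cohomological on the smooth locus of $X_0$, which affords considerable flexibility in the choice of $X_0$.

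For the construction of $X_0$ itself, the natural template is an iterated Pfister-type quadric bundle. I would take $r\approx d-1$ sufficiently generic functions $f_1,\dots,f_r$ on a base variety, form the associated rank-$2^r$ Pfister quadric, and realize $X_0$ as (the closure in $\PP^{n+1}_k$ of) an iterated quadric fibration whose residue sequences ascend the Milnor symbol $\{f_1,\dots,f_r\}\in K^M_r/2$ up the tower. Because each quadric stage roughly doubles the relative fibre dimension of the total space, after $r$ steps one reaches ambient dimension of order $2^r$; this is precisely the source of the logarithmic relation between $d$ and $n$. Non-vanishing of the top iterated residue in $H^r_{nr}(\widetilde{X}_0,\Z/2)$ reduces, via the norm residue isomorphism (the Voevodsky--Rost theorem, which requires $\operatorname{char}(k)\neq 2$) together with the Arason--Pfister Hauptsatz, to the non-triviality of the Pfister symbol itself, which is automatic for generic $f_i$.

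The principal obstacle will be realizing this abstract iterated quadric bundle as an \emph{honest} hypersurface of degree exactly $d$ in $\PP^{n+1}_k$, with singularities mild enough for the \cite{Sch-duke} specialization mechanism to apply. Concretely one needs to cook up explicit defining equations of the shape $\sum_i g_i q_i=0$, where the $q_i$ are Pfister-type quadrics in disjoint variable blocks and the $g_i$ are polynomials of the complementary degree that glue them into a single hypersurface of degree $d$, and then one must analyze a resolution or alteration carefully enough to verify the cohomological hypothesis of \cite{Sch-duke} for the symbol class constructed above. Once this geometric scaffolding is in place, the residue computation producing the nontrivial unramified class becomes a routine, if intricate, application of Milnor K-theory modulo $2$, and the theorem follows.
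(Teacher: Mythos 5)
Your proposal follows essentially the same route as the paper: degeneration to a singular Pfister-type quadric bundle hypersurface, a nontrivial mod-$2$ symbol class in unramified cohomology pulled back from the base, and the refined specialization criterion of \cite{Sch-duke,Sch-JAMS} in which the resolution/alteration need only satisfy a cohomological vanishing condition over the singular locus rather than universal $\CH_0$-triviality. The only cosmetic difference is that the paper uses a single quadric bundle over $\CP^{d-2}$ whose fibers sit inside the rank-$2^{d-2}$ Pfister quadric (rather than an iterated tower), with explicit equations of exactly the shape $\sum_i g_i q_i=0$ that you anticipate.
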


The above logarithmic bound is equivalent to $n \leq 2^{d-2}$.  
Using ideas from motivic integration \cite{NS,KT} together with tropical and discrete methods, Nicaise--Ottem \cite{NO} improved the stable rationality bound for $k=\C$ and $n=5$ from $d\geq 5$ to $d\geq 4$.  
Moe \cite{moe} extended this to $n \leq (d+1)2^{d-4}$, which recovers precisely the Nicaise--Ottem bound for $d=4$; his proof uses Theorem \ref{thm:JAMS-intro} as input.  
Joint work with Lange \cite{Lange-Sch} generalized Moe’s theorem using a variant of the cycle-theoretic method from \cite{Pavic-Sch}, yielding the following; an alternative proof in characteristic zero is due to Hotchkiss--Stapleton \cite{hotchkiss-stapleton}.

\begin{theorem}[\cite{NO,moe,Pavic-Sch,Lange-Sch}]\label{thm:Lange-Sch-intro} 
A very general hypersurface $X\subset \CP^{n+1}_k$ of degree $d\geq 4$ and dimension $n \leq (d+1)2^{d-4}$ over a field of characteristic different from $2$ is not retract rational. 
\end{theorem}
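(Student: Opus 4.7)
The plan is to reduce, via specialization, to constructing a single singular degree-$d$ hypersurface $X_0 \subset \PP^{n+1}_k$ which fails a cycle-theoretic criterion for retract rationality, and then to establish this failure using the refined decomposition-of-the-diagonal method developed in \cite{Pavic-Sch}. Theorem \ref{thm:JAMS-intro} will serve as the base case for an induction on $d$, starting at $d=4$ where the bound $n\leq 5$ of \cite{NO} is the target.

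First, by the specialization principle for retract rationality through decomposition of the diagonal (as in \cite{voisin,CTP,Sch-duke}), it suffices to exhibit a single degree-$d$ hypersurface $X_0 \subset \PP^{n+1}_k$ whose resolution of singularities carries a nontrivial unramified obstruction, provided the singularities and resolution are well-behaved in the sense of \cite{Pavic-Sch}. The very general smooth hypersurface of degree $d$ and dimension $n$ then specializes to such an $X_0$ in the linear system, and failure of retract rationality propagates upward to the generic member.

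Second, the construction of $X_0$ follows the pattern of \cite{NO,moe}: one takes $X_0$ to admit a (generically) conic- or quadric-bundle structure whose discriminant locus is, up to birational modification, a lower-dimensional hypersurface $Y$ of smaller degree for which non-retract-rationality is already known, either inductively or from Theorem \ref{thm:JAMS-intro}. The dimension bookkeeping in this bundle construction is what produces the bound $n \leq (d+1)2^{d-4}$: the factor $(d+1)$ reflects the number of linear sections available in the degeneration, while the exponential $2^{d-4}$ arises from iterating a halving of dimensions, in parallel with the JAMS induction.

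Third, the cycle-theoretic step requires that a nontrivial unramified class on $Y$ (of the type used in the proof of Theorem \ref{thm:JAMS-intro}) pull back to a nonzero class on a resolution $\widetilde{X}_0$ of $X_0$ in a way that genuinely obstructs retract rationality. The Voisin--Colliot-Th\'el\`ene--Pirutka machinery in its original form requires universal $\CH_0$-triviality of $\widetilde{X}_0 \to X_0$, which typically fails for the non-isolated singularities produced by the conic-bundle construction. The key input from \cite{Pavic-Sch} is a relaxed $\CH_0$-triviality condition which tolerates such singularities, provided the exceptional contributions admit explicit control.

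The main obstacle will be precisely this third step: verifying the Pavic-Sch-type $\CH_0$-triviality while simultaneously tracking the unramified class through the conic-bundle. This requires an explicit description of $X_0$, a local analysis of the resolution near its singular strata, and an adaptation of the unramified-cohomology computations of \cite{Sch-JAMS} to the bundle setting. The gain of the polynomial factor $(d+1)/4$ over the logarithmic JAMS bound is precisely a measure of how efficiently this induction step can be executed within the Pavic-Sch framework.
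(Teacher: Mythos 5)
Your proposal misses the key mechanism of the proof. The improved bound $n\leq (d+1)2^{d-4}$ is \emph{not} obtained by degenerating to a single irreducible singular hypersurface $X_0$ with a conic/quadric-bundle structure and a nontrivial unramified class on a resolution, checked against a relaxed $\CH_0$-triviality condition. That is essentially the strategy behind Theorem \ref{thm:JAMS-intro}, and it only yields the logarithmic bound $n\leq 2^{d-2}$. The actual proof degenerates $X$ to a \emph{reducible} simple normal crossing special fiber $Y=Y_0\cup Y_1$ (the ``double cone construction'' of Example \ref{ex:double-cone}), where both components $Y_0,Y_1$ are \emph{rational} (after shrinking, open subsets of affine space) and carry no obstruction at all; the obstruction lives on the double locus $Z=Y_0\cap Y_1$, which is a hypersurface of the \emph{same} degree $d$ and dimension one less. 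The key cycle-theoretic input (Theorem \ref{thm:Obstruction-Lange-Sch}, building on \cite{Pavic-Sch,Lange-Sch}) is a surjectivity statement for restriction of $1$-cycles from the components onto $\CH_0$ of the double locus in a strictly semi-stable (possibly non-proper) degeneration: since the components are open in affine space, this forces $\CH_0(Z^\circ_L)\otimes\Lambda=0$, so a $\Lambda$-decomposition of the diagonal on the generic fiber descends to $Z$. The role of \cite{Pavic-Sch} is thus not a ``relaxed $\CH_0$-triviality of a resolution'' but this passage to the double locus of a reducible fiber.

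Consequently your induction runs in the wrong direction and your bookkeeping for the bound does not match. The induction is on the dimension $N$ for \emph{fixed} degree $d$ (each double-cone step raises the dimension of the generic fiber by one while $Z$ is the previous step's hypersurface), with base case the JAMS bound $N=d-2+2^{d-2}-2$; it is not an induction on $d$ with discriminants of smaller degree. The factor $2^{d-4}$ does not come from ``halving dimensions'': the bound arises as $d-2+2^{d-2}+(2^{d-2}-2)\lfloor (d-2)/4\rfloor\approx (d+1)2^{d-4}$, where the number of admissible iterations is governed by how often the variables $y_j$ in the Pfister-type equation \eqref{eq:F-JAMS} appear quadratically in monomials divisible by suitable powers of $x_0$. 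Without the degeneration-to-unions idea and the obstruction theorem for the double locus, your plan would at best recover the logarithmic bound, not the stated polynomial improvement.
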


The above techniques yield strong asymptotic results in large dimension and degree $\geq 4$, but they do not apply to cubics. 
After the work of Clemens--Griffiths \cite{clemens-griffiths} in the 1970s, the first open case is the stable or retract rationality of cubic threefolds.  
In joint work with Engel and de Gaay Fortman, we used matroid theory to show that an obstruction of Voisin \cite{voisin-JEMS} in terms of algebraicity of the minimal class of the intermediate Jacobian is nontrivial for very general cubic threefolds, thus settling this longstanding open case as well:

\begin{theorem}[\cite{EGFS}] \label{thm:EGFS-intro}
A very general cubic threefold $X\subset \CP^4_\C$ is not retract rational. 
\end{theorem}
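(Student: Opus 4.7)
The approach combines Voisin's Hodge-theoretic obstruction for cubic threefolds with a degeneration argument in which the intermediate Jacobian is governed by a matroid. The starting point is the following result of Voisin \cite{voisin-JEMS}: if a smooth cubic threefold $X\subset \CP^4_\C$ is retract rational (equivalently, admits an integral decomposition of the diagonal), then the minimal cohomology class $\theta^4/4! \in H^8(J(X),\Z)$ of the principally polarized intermediate Jacobian $(J(X),\Theta)$ is represented by an algebraic $1$-cycle. Since the locus of cubic threefolds on which a fixed integral Hodge class is algebraic is a countable union of closed subvarieties of the moduli space, it is enough to produce one smooth cubic threefold on which $\theta^4/4!$ is not algebraic; the conclusion for a very general cubic then follows.

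To produce such an example, I would construct a $1$-parameter family $\mathcal X\to B$ of smooth cubic threefolds degenerating to a maximally singular cubic $X_0$, chosen so that the N\'eron model of the associated family of intermediate Jacobians has special fiber whose toric part is encoded by a regular matroid $M$, building on Gwena \cite{gwena}. The algebraic cohomology classes on this N\'eron special fiber that lift to algebraic classes on nearby smooth fibers are then combinatorially controlled by the cycle lattice of $M$. Translating Voisin's algebraicity requirement into this setting produces an explicit element of the cohomology of a matroidal tropical abelian variety, and one must show that this element does \emph{not} lie in the span of cycles of $M$; by specialization of algebraic cycles, this will preclude algebraicity of $\theta^4/4!$ on the generic smooth fiber.

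The main obstacle is therefore twofold: one must (i) exhibit a regular matroid $M$ that is genuinely realized as the combinatorial type of a degeneration of smooth cubic threefolds, and (ii) verify that the matroid-theoretic avatar of the minimal class is not algebraic on the limit. Step (i) requires the explicit construction of a suitable family — likely through hyperplane arrangements or tropical/toric models associated with $M$ — while step (ii) is a concrete but delicate matroid computation, and I expect it to be the heart of the argument: the right $M$ must be selected so that the distinguished combinatorial class lies outside the span of cycles of $M$, a property that is not generic among regular matroids and requires an inspired choice. Combined with the specialization argument and Voisin's obstruction, steps (i) and (ii) together yield the theorem.
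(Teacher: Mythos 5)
Your proposal has the right skeleton --- Voisin's theorem that a (cohomological) decomposition of the diagonal forces the minimal class $[\Theta]^4/4!\in H_2(JX,\Z)$ to be algebraic, combined with Gwena's observation that the degeneration to the Segre cubic realizes the matroid $\underline R_{10}$ --- and this is indeed the architecture of the actual proof. But there is a genuine gap at the middle step, and it is exactly the point where the argument has to depart from all earlier degeneration arguments, including Gwena's.

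First, a $1$-parameter degeneration does not suffice. Gwena's disc argument works for \emph{rationality} because Clemens--Griffiths gives an isomorphism of principally polarized abelian varieties $(JY_t,\Theta)\cong(JC_t,\Theta_C)$, so the monodromy bilinear forms, their Delaunay decompositions, and hence the matroids literally coincide, and one concludes from $\underline R_{10}$ not being cographic. Voisin's obstruction gives far less: algebraicity of the minimal class only produces a curve $C$ and a morphism $f\colon JC\to JY$ with $f_\ast[C]=m[\Theta]^4/4!$, where it remains to rule out odd $m$; after localizing at $2$ one obtains merely that $H_1(JY,\Z_{(2)})$ is a \emph{direct summand} of $H_1(JC,\Z_{(2)})$, not an isometry of polarized lattices. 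So "the algebraic classes that lift are controlled by the cycle lattice of $M$" is not the statement you actually get. The correct matroidal avatar of this summand condition is a "quadratic $\Z_{(2)}$-splitting of level $d$" of $\underline R_{10}$ into a cographic matroid, and extracting it requires working over the full $10$-dimensional universal deformation $\Delta^S$ of the Segre cubic (so that the ten vanishing cycles and their monodromies can be tracked independently), not over a disc. Second, your step (ii) is not a check that a distinguished class lies outside the span of cycles of $M$: the combinatorial heart is the theorem that any regular matroid admitting a quadratic $\Z_{(2)}$-splitting into a cographic matroid is itself cographic, proved by a chain of reductions ending in Tutte's excluded-minor characterization (and $\underline R_{10}$ has $M(K_{3,3})$ as a minor). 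Without these two inputs --- the higher-dimensional base and the splitting-implies-cographic theorem --- the proposed argument does not close.
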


In all dimensions $n\geq 4$, taking the cone over such a threefold produces irrational singular Fano hypersurfaces of degree $3$ with only terminal singularities.
These examples, however, do not shed light on smooth cubics in higher dimensions, cf.\ \cite[Section 4]{totaro}.
For instance, even in dimension $4$ rationality of smooth cubics was an open problem until very recently, when Katzarkov--Kontsevich--Pantev--Yu proved the following striking result: 

\begin{theorem}[\cite{KKPY}] \label{thm:KKPY}
A very general cubic fourfold $X\subset \CP^5_\C$ is not rational.
\end{theorem}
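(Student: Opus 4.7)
The plan is to construct a new birational invariant of smooth projective varieties using ideas from quantum cohomology and from the Hodge theory of Landau--Ginzburg mirrors \ref{item:intro:quantum-cohomology}, and then to show that for a very general cubic fourfold this invariant is nontrivial and therefore obstructs rationality. Concretely, to a smooth projective variety $X$ one associates (via mirror symmetry) a Landau--Ginzburg model $(Y,w)$, or equivalently extracts data from the small quantum cohomology ring of $X$ and from the non-commutative Hodge structure on the periodic cyclic homology of $\D^b(X)$, and one distills from the asymptotic/limiting mixed Hodge structure of $(Y,w)$ a finite-dimensional invariant $I(X)$, for instance an ``atomic'' summand of the Hodge structure of a general fiber of $w$ together with its monodromy filtration.

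First I would prove that $I$ is a birational invariant vanishing on rational varieties. By weak factorization it suffices to establish a blow-up formula: for the blow-up $\tilde X \to X$ along a smooth center $Z$, the invariant $I(\tilde X)$ decomposes additively in $I(X)$ and copies of $I(Z)$, paralleling the semi-orthogonal decomposition of $\D^b(\tilde X)$ and the Künneth-type decomposition of the corresponding LG mirror. Together with the vanishing $I(\mathbb P^n) = 0$, this forces $I(X)=0$ whenever $X$ is rational, because then $I(X)$ is built out of $I$ of lower-dimensional pieces, and one can then induct on dimension to reduce to a spanning class of rational varieties on which $I$ vanishes.

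Second, I would exhibit a nonzero class in $I(X)$ for a smooth cubic fourfold $X \subset \CP^5_\C$. Here one uses the known LG mirror of a cubic fourfold, whose vanishing cohomology is closely related both to the primitive Hodge structure of $X$ of type $(1,20,1)$ and to the $K3$-type Hodge structure on the Kuznetsov component $\mathcal A_X \subset \D^b(X)$; concretely one aims to isolate an atomic summand whose Hodge-theoretic type or dimension cannot appear in $I(Z)$ for any smooth projective $Z$ of dimension at most three. Since $I$ varies in a constructible way in families of smooth projective varieties and behaves well under specialization, non-vanishing at a single sufficiently generic cubic fourfold would then propagate, via a standard specialization argument in the moduli of cubic fourfolds, to the very general member and hence rule out rationality there.

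The main obstacle I expect is the proof of the blow-up formula for $I$, since quantum cohomology and LG mirrors transform in subtle ways under birational modifications: one has to identify the correct ambient category or quotient in which the invariant is truly additive, and control the contributions of the exceptional divisor and its normal bundle to the asymptotic Hodge structure of the mirror. A secondary difficulty is to rule out that the specific atomic piece of $I(X)$ detected for cubic fourfolds could be absorbed by $I(Z)$ for some smooth projective $Z$ of smaller dimension appearing in a hypothetical rational parametrization; this requires a finite but subtle classification of the possible atomic summands of low dimension, for which the non-commutative Hodge-theoretic structure (in particular its compatibility with the monodromy weight filtration) has to be exploited.
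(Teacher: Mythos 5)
The paper does not prove Theorem \ref{thm:KKPY}: it is quoted from Katzarkov--Kontsevich--Pantev--Yu \cite{KKPY}, and the survey explicitly declines to discuss the argument for reasons of space. So the only fair comparison is with the strategy of \cite{KKPY} itself, and your sketch does correctly identify that circle of ideas --- a birational invariant extracted from quantum multiplication and the associated (non-commutative / Landau--Ginzburg) Hodge-theoretic data, a blow-up formula playing the role of the semi-orthogonal decomposition, and an ``atomic'' piece of the cubic fourfold that cannot be accounted for by lower-dimensional centers. In that sense the proposal is pointed in the right direction.

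However, as written this is a research plan, not a proof, and every load-bearing step is deferred. (i) The invariant $I(X)$ is never actually defined: ``distills from the asymptotic/limiting mixed Hodge structure \dots a finite-dimensional invariant'' does not specify an object, and without a precise definition none of the subsequent claims can be checked. (ii) The blow-up formula --- which you yourself flag as the main obstacle --- is exactly the technical heart of \cite{KKPY}; asserting that it ``parallels'' the semi-orthogonal decomposition of $\D^b(\tilde X)$ is not an argument, since quantum cohomology is notoriously not additive under blow-ups in any naive sense, and identifying the correct structure in which additivity holds is the whole point. Note also that the correct conclusion from birational invariance is $I(X)\cong I(\CP^4)$ corrected by contributions of blow-up centers of dimension $\le 2$, not literally $I(X)=0$. (iii) The non-vanishing step is likewise only asserted: you would need to classify which atoms can arise from smooth projective varieties of dimension at most three and show that the relevant summand for the very general cubic fourfold is not among them; the relation to the $(1,20,1)$ primitive Hodge structure or to the Kuznetsov component is suggestive but proves nothing by itself (indeed, special rational cubic fourfolds have the same Hodge numbers). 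So there is a genuine gap: the proposal reproduces the announced strategy of \cite{KKPY} at the level of headlines but supplies none of the constructions or arguments that would make it a proof.
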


The aforementioned results on irrationality for (very general) hypersurfaces cover essentially all results known up till now.  
The next open cases concern stable rationality of cubic fourfolds and rationality of cubic fivefolds.
In degree $d\geq 4$, the next open cases are quartic sixfolds and quintics in dimension $13$, cf.\ \cite[Theorem 7.1]{Lange-Sch}.  
 
This survey is organized as follows.  
In Section \ref{sec:rationality} we give examples of rational and retract rational varieties, and explain how to obstruct retract rationality via decompositions of the diagonal \cite{ACTP,voisin}.  
Section \ref{sec:deg+coho} sketches the proof of Theorem \ref{thm:JAMS-intro} (see the survey \cite{Sch-survey} for more details), while Section \ref{sec:motivic} explains the improvements leading to Theorem \ref{thm:Lange-Sch-intro}.  
Sections \ref{subsec:coho-dec-diag-1}--\ref{subsec:coho-dec-diag-2} revisit the aspects of Voisin’s work \cite{voisin-JEMS,voisin-min-class} that are used in the proof of Theorem \ref{thm:EGFS-intro}; we make some effort to formulate parts of it over arbitrary fields. % \cite{EGFS}. %, with an effort to treat arbitrary base fields, relevant for future work in positive characteristic.  
Section \ref{subsec:matroids} recalls regular matroids, and Section \ref{subsec:cubic-threefolds} comments on the proof strategy of Theorem \ref{thm:EGFS-intro}.
 
Due to limitations of space, we will not try to address the proof of Theorem \ref{thm:KKPY}.  
We also omit many further exciting developments, such as the rationality problem over non-closed fields, %\cite{BW1,KP,HT-nonclosed,BW2}, 
equivariant birational geometry, Noether's problem, relations to the Cremona group, birational rigidity or derived category approaches to rationality.  
Excellent surveys on various aspects of the rationality problem that we do not discuss in detail here can for instance be found in \cite{CT-Sansuc,kuznetsov,hassett-survey,pirutka,bogomolov-tschinkel,CT-garda,voisin-survey,kollar-survey,kollar-survey-2,debarre}.

\section{Basic notions} \label{sec:rationality}

\subsection{Examples of rational varieties.} 
The following example goes back to the ancient Greeks.

\begin{example}[Stereographic projection]
Let $X\subset \CP^{n+1}_k$ be a quadric with a $k$-rational point $p\in X(k)$ in its smooth locus.
Choose a hyperplane $H\subset \CP^{n+1}_k$ not containing $p$, and let $L_{p,q}$ be the line through $p$ and $q\neq p$.
The rational map $\varphi\colon X\dashrightarrow H$, $q\mapsto L_{p,q}\cap H$, is birational because a general line through $p$ meets $X$ in precisely two points. 
Hence $X$ is rational.
\end{example}

The inverse $\varphi^{-1}$ sends a general point $q\in H$ to the residual point $r$ of $L_{p,q}\cap X=\{p,r\}$, giving an explicit parametrization of $X(k)$ outside the indeterminacy loci.  
For instance, if $X\subset \CP^2_k$ is given by $x^2+y^2=z^2$, one can recover the parametrization $x=2ts$, $y=s^2-t^2$, $z=s^2+t^2$ with $[s:t]\in \CP^1_k$.  
For $k=\Q$, this yields the classical Pythagorean triples 
$a=2mn$, $b=m^2-n^2$, $c=m^2+n^2$ for coprime $m,n\in \Z$ satisfying $a^2+b^2=c^2$. 

\begin{example}[Euler 1761]
Rationality of the Fermat cubic surface $w^{3}+x^{3}+y^{3}+z^{3}=0$ over $\Q$ was shown by Euler.
An explicit description of all rational solutions in some dense open subset is given by
\small{
\[
(w,x,y,z)=\bigl(\lambda(1-(a-3b)(a^2+3b^2)),\;\lambda((a+3b)(a^2+3b^2)-1),\;-\lambda((a+3b)-(a^2+3b^2)^2),\;-\lambda((a^2+3b^2)^2-(a-3b))\bigr),
\]}
where $a,b,\lambda\in\mathbb{Q}$, see \cite[\S 13.7]{hardy-wright}.
\end{example}

%The next result goes back to Clebsch in 1950.
 
\begin{example}[Clebsch 1866] \label{ex:cubic-surface}
Let $X\subset \CP^3_k$ be a smooth cubic surface over an algebraically closed field $k$.
It can be shown that $X$ contains two lines $L_1,L_2\subset X$ with $L_1\cap L_2=\emptyset$.
Then the rational map $L_1\times L_2\dashrightarrow X$, $(p,q)\mapsto r$, where $r$ is the residual intersection point of $X\cap L_{p,q}$, is birational. Hence $X$ is rational.
\end{example}

The first irrational smooth cubic surfaces over non-closed fields were produced by Segre, see e.g.\ \cite{segre}. % around 1950. %, cf.\ \cite{CT-garda}. 

% \cite{segre}, whoe found examples over $\R$ that have disconnected real locus; stably irrational examples have later been provided by the Russian school.

\begin{example}
While the very general cubic fourfold $X\subset \CP^{5}_\C$ is irrational \cite{KKPY}, many special smooth cubic fourfolds are rational. 
The simplest case is when $X$ contains two disjoint planes, so that a similar construction as in Example \ref{ex:cubic-surface} applies.
Much more sophisticated constructions are 
contained in the recent work of Russo and Staglian\`o  \cite{russo-stagliano-1,russo-stagliano-2}; see also Hassett's survey \cite{hassett-survey} on the subject.
\end{example}

\subsection{Retract rational varieties.} 
A $k$-variety $X$ is \emph{unirational} if there exists a dominant rational map $g\colon \CP^n_k\dashrightarrow X$.
For example, building on work of Segre and Manin, Koll\'ar \cite{kollar-cubics} proved that a smooth cubic hypersurface of dimension $n\geq 2$ over any field $k$ is unirational precisely when it contains a $k$-point.
Unirationality implies that a Zariski-dense set of $k$-points can be parametrized by rational functions, but this does in general not guarantee that we can parametrize ``almost all'' of them in a satisfactory way.
For instance, the unirational real cubic surface 
$S = \{\lambda x^3 + y^3 + z^3 + w^3 + (-x-y-z-w)^3=0\}\subset \CP^3_\R$ with $\tfrac{1}{16}<\lambda<\tfrac{1}{4}$ has disconnected real locus $S(\R)$, cf.\ \cite{segre} and \cite[Example 3.1]{polo-blanco-top}.
However, any dominant rational map $g\colon \CP^N_\R\dashrightarrow S$ can be resolved by a sequence of smooth blow-ups and since any such modification of projective space has connected real locus, $g$ can parametrize $\R$-points on at most one component of $S(\R)$.
This motivates the following; for Saltman's original motivation in \cite{saltman-2}, see \cite[Proposition 1.2, Remark 1.3 and Proposition 3.15]{CT-Sansuc}:

\begin{definition} \label{def:retract}
    A variety $X$ over a field $k$ is retract rational if for some integer $N$ there is a commutative diagram of rational maps such that the composition $g\circ f$ is defined:
\begin{equation} \label{diag:retract-rational}
\vcenter{\xymatrix{
X\ar@{-->}[dr]_f  &   & \ar@{-->}[ll]^{\id_X} X\\
& \CP_k^N\ar@{-->}[ru]_g &
}}
\end{equation}
Equivalently, there are open dense subsets $U\subset X$ and $V\subset \CP^N$ and a morphism $g\colon V\to U$ which is surjective on $L$-rational points for all field extensions $L$ of $k$: $V(L)\twoheadrightarrow U(L)$ for all $L/k$.
\end{definition}

To see the above claimed equivalence, note that the former implies the latter, because \eqref{diag:retract-rational} implies that we can find open dense subsets $U\subset X$ and $V\subset \mathbb P^N$ such that $f|_U\colon U\to V$ and $g|_V\colon V\to U$ are morphisms with $g|_V\circ f|_U=\id_U$. Hence, $g|_V$ is surjective on $L$-rational points for all $L/k$. To see the converse, apply the latter definition to $L=k(X)$ and lift the diagonal point $\delta_X\in U(k(X))$ to a point $\delta'\in V(k(X))$. Then $\delta'$ corresponds to a rational map $f\colon X\dashrightarrow V\subset \mathbb P^N$ and the condition $g(\delta')=\delta_X$ implies $g\circ f=\id_X$, as we want.

%The integer $N$ is typically larger than $\dim X$.
%%; the map $g$ is unirational and $f$ is a rational section of $g$.
%If $X/k$ is retract rational, then it is unirational, since $g$ in \eqref{diag:retract-rational} must be dominant.
%The rational map $f$ provides a rational section of the unirational parametrization given by $g$.
%%In fact, \eqref{diag:retract-rational} 
%This implies the existence of dense open subsets $U\subset \CP^N_k$ and $V\subset X$ such that $g\colon U\to V$ is surjective on $L$-rational points for every field extension $L/k$.
%Thus almost all solutions of the defining equations can be parametrized by rational functions; not only over $k$ but also over any field extension $L/k$. 
%This is one reason why even over algebraically closed fields, the distinction between unirational and retract rational varieties is significant.

%If $X/k$ is retract rational, then it is unirational because $g$ in \eqref{diag:retract-rational} must be dominant.
%Moreover, the existence of $f$ implies that there are dense open subsets $U\subset X$ and $V\subset \CP^N_k$ such that $g$ yields a morphism $g\colon V\to U$ which is surjective on $L$-rational points for any field extension $L/k$. 
%In other words, almost all solutions of the underlying equations can be parametrized by rational functions; not only over $k$ but also over any (possibly non-closed) field extension of $k$.
%This explains that even over algebraically closed fields, the distinction between unirational and retract rational varieties is interesting. 

\begin{example}
A stably rational variety is retract rational.
More generally, if $X,Y$ are $k$-varieties such that $X\times Y$ is rational and $Y$ contains a $k$-rational point, then $X$ is retract rational.
Indeed, under this assumption, $X\times Y$ serves the purpose of $\CP_k^N$ up to birational equivalence and the maps $f$ and $g$ are induced by the inclusion of a factor and the projection, respectively.
\end{example}

Note that in the diagram \eqref{diag:retract-rational}, the map $f$ is necessarily generically injective, i.e.\ $f$ induces an isomorphism between $\kappa(\eta_X)$ and $\kappa(f(\eta_X))$, where $\eta_X\in X$ denotes the generic point. 
The rational map $g$ is then a rational retraction of $f$, i.e.\ a rational map such that the composition $g\circ f$ is defined and coincides with the identity as a rational map.
It is worth noting the following, see \cite[Lemma 2.1]{Sch-MRL}.

\begin{lemma}
Let $k$ be a field and let $X$ be a retract rational $k$-variety. 
Then for any rational map  $f\colon X\dashrightarrow \CP^N_k$ which is generically injective, there is a
rational retraction $g\colon \CP^N\dashrightarrow X$. %, i.e.\ a rational map $g$ such that the composition $g\circ f$ is defined and coincides with the identity as a rational map.
\end{lemma}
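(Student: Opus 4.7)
The plan is to reduce to the given retract rationality data by extending an auxiliary rational map from a subvariety of $\CP^N_k$ to all of $\CP^N_k$. Fix $f'\colon X\dashrightarrow \CP^{N'}_k$ and $g'\colon \CP^{N'}_k\dashrightarrow X$ with $g'\circ f'=\id_X$ as in \eqref{diag:retract-rational}, and let $Z\subset \CP^N_k$ denote the closure of the image of the given generically injective map $f$. Since $f$ is birational onto $Z$, it admits a rational inverse $f^{-1}\colon Z\dashrightarrow X$, and we may form the rational map $\phi:=f'\circ f^{-1}\colon Z\dashrightarrow \CP^{N'}_k$, which is defined at the generic point $\eta_Z$.

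The heart of the argument is to extend $\phi$ to a rational map $h\colon \CP^N_k\dashrightarrow \CP^{N'}_k$ that is still defined at $\eta_Z$ and restricts to $\phi$ on $Z$. Granting such an $h$, one sets $g:=g'\circ h$ and verifies $g\circ f=\id_X$ by tracking generic points: since $f$ sends $\eta_X$ to $\eta_Z$ and $h(\eta_Z)=\phi(\eta_Z)=f'(f^{-1}(\eta_Z))=f'(\eta_X)$, one obtains $h\circ f=f'$ as rational maps from $X$, and then $g\circ f=g'\circ h\circ f=g'\circ f'=\id_X$ by hypothesis.

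The main (and essentially only) obstacle is the extension step. This is a standard but slightly delicate exercise: on a suitable affine chart $U\cong \A^N_k$ of $\CP^N_k$, the rational map $\phi$ is given on a dense open of $Z\cap U$ by a tuple of rational functions of the form $\overline{P_i}/\overline{Q_i}$, where $P_i,Q_i\in k[U]$ and $Q_i$ does not vanish identically on $Z\cap U$; the same formulas then define the desired extension $h\colon U\dashrightarrow \A^{N'}_k\subset \CP^{N'}_k$, which is automatically defined at $\eta_Z$ because $Q_i|_Z$ is nonzero there. Equivalently, one can invoke surjectivity of the restriction $H^0(\CP^N_k,\OO(d))\to H^0(Z,\OO_Z(d))$ for $d\gg 0$ to lift a tuple of sections representing $\phi$. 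This lifting, together with the generic-point chase above, is the entire content of the proof.
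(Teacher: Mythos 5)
Your argument is correct and is essentially the argument behind the cited reference [Sch-MRL, Lemma 2.1]; the paper itself does not reprove the lemma but defers to that reference, and your route (invert $f$ onto $Z=\overline{\im f}$, extend $f'\circ f^{-1}$ from $Z$ to $\CP^N_k$ by lifting the defining rational functions, then compose with the given retraction $g'$) is the expected one. The only step you leave implicit is that $g:=g'\circ h$ is defined as a rational map on all of $\CP^N_k$ and not merely along $Z$: this is routine, since $g'$ is defined at $h(\eta_Z)=f'(\eta_X)$ because $g'\circ f'$ is assumed defined, and the domain of definition of $g'$ is open, hence stable under generization, so it also contains $h(\eta_{\CP^N})$.
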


Hence, if a hypersurface $X\subset \CP^{n+1}_k$ is retract rational, then the inclusion $X\hookrightarrow \CP^{n+1}_k$ admits a rational retraction.
Since any variety is birational to a hypersurface, it follows that we may take $N=\dim X+1$ in \eqref{diag:retract-rational}. 

%\begin{remark}
%Recently, Banecki \cite{banecki,banecki-2} showed that smooth retract rational varieties over infinite fields are uniformly retract rational, i.e.\ for any point $x\in X$ there is a commutative diagram as in \eqref{diag:retract-rational} such that $f$ is defined at $x$ and $g$ is defined at $f(x)$.
%Hence, by Noetherian induction, any smooth projective retract rational variety over an infinite field $k$ admits finitely many unirational parametrizations such that for any $L/k$ any $L$-rational point of $X$ is covered by one of them.
%\end{remark}
The following uniform result on retract rationality has recently been proven by Banecki.

\begin{theorem}[\cite{banecki}] \label{thm:banecki}
Let $X$ be a smooth variety over an infinite field $k$.
If $X$ is retract rational then it is uniformly retract rational, that is, for any point $x\in X$ there is a commutative diagram as in \eqref{diag:retract-rational} such that $f$ is defined at $x$ and $g$ is defined at $f(x)$.
\end{theorem}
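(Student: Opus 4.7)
The plan is to start with a given retract rational structure $(f_0, g_0, N_0)$ on $X$ satisfying $g_0 \circ f_0 = \id_X$, and to modify it, for each specified point $x \in X$, into a new structure $(f, g, N)$ that is defined at $x$ and at $f(x)$, while still satisfying $g \circ f = \id_X$. The two key inputs are smoothness of $X$ at $x$, which makes $\OO_{X,x}$ a regular local ring and hence a unique factorization domain, and the infinitude of $k$, which supplies enough generic $k$-rational parameters for a moving argument. A preliminary reformulation reduces the desired defined-ness conditions to the construction of a pair of local $k$-algebra homomorphisms $\OO_{X,x} \to \OO_{\mathbb{P}^N,y} \to \OO_{X,x}$ (for some $y$) whose composition is the identity and which induces the field-level retraction coming from $(f_0,g_0)$.

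Concretely, I would first use smoothness at $x$ to write $f_0$, locally around $x$, in the form $[s_0:\cdots:s_{N_0}]$ with $s_i \in \OO_{X,x}$ having no common factor; the obstruction to $f_0$ being defined at $x$ is then the common vanishing of the $s_i$ at $x$, a condition of codimension at least two. To remove this obstruction, I would enlarge the retract structure by composing with a generic linear embedding $\mathbb{P}^{N_0} \hookrightarrow \mathbb{P}^N$ and adjoining auxiliary sections coming from a regular system of parameters of $\OO_{X,x}$, so that the new representing sections do not all vanish at $x$; the corresponding $g$ is built from $g_0$ via the generic linear projection $\mathbb{P}^N \dashrightarrow \mathbb{P}^{N_0}$, which keeps the composition equal to $\id_X$. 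Finally, I would invoke a moving argument via a generic element of $\PGL_{N+1}(k)$ to ensure that $g$ is defined at the new image point $f(x)$; the infinitude of $k$ guarantees that the finitely many Zariski-open conditions arising from these successive modifications can be satisfied simultaneously, since over an infinite field a finite intersection of non-empty Zariski-opens in an affine parameter space is non-empty.

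The main obstacle is the coupling between the two defined-ness requirements: modifying $f$ to be defined at $x$ also changes the target point $f(x)$, so the condition that $g$ be defined at $f(x)$ changes as well, and generic moves designed to fix one condition can a priori destroy the other. Overcoming this requires that the parameter space of allowable modifications, namely the product of choices of auxiliary sections, linear embeddings, and elements of $\PGL_{N+1}(k)$, is large enough that both conditions can be met by a single generic choice; verifying this amounts to checking that the corresponding incidence locus in the moduli of retract rational structures is irreducible and its projections onto the two defined-ness loci are dominant. Both properties ultimately reduce, via smoothness of $X$ at $x$, to statements about regular local rings that are insensitive to the particular choice of $x$, and the denseness of $k$-points on the parameter space (which uses crucially that $k$ is infinite) then produces an honest $k$-rational choice realizing the desired diagram.
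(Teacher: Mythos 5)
There is a genuine gap, and it sits exactly at the point you yourself flag as the ``main obstacle.'' Your mechanism for making $f$ defined at $x$ is to adjoin auxiliary sections $t_1,\dots,t_m$ (not all vanishing at $x$) to the representing sections $s_0,\dots,s_{N_0}$ of $f_0$, and to define the new $g$ as $g_0$ precomposed with a linear projection $\CP^N\dashrightarrow \CP^{N_0}$. But if $f_0$ was undefined at $x$, then all the $s_i$ vanish at $x$, so the new image point $f(x)=[0:\dots:0:t_1(x):\dots:t_m(x)]$ lies precisely in the center of that projection, i.e.\ exactly where your $g$ is undefined. No ``generic'' choice of linear embedding repairs this: the $\CP^{N_0}$-component of $f(x)$ is identically zero, so $f(x)$ lies in the center of every complementary linear projection. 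The $\PGL_{N+1}(k)$ move is likewise vacuous for this problem: replacing $(f,g)$ by $(\sigma\circ f,\,g\circ\sigma^{-1})$ preserves $g\circ f=\id_X$, but $g\circ\sigma^{-1}$ is defined at $\sigma(f(x))$ if and only if $g$ is defined at $f(x)$, while moving only one of the two maps destroys the identity of the composition. So genericity does not decouple the two defined-ness conditions, and your concluding appeal to irreducibility and dominance of an ``incidence locus in the moduli of retract rational structures'' is an unproved assertion that essentially restates the theorem rather than proving it.

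For calibration: the survey states this result only as a citation to Banecki and contains no proof, so there is no in-paper argument to match; the comparison must be with Banecki's original proof, which is considerably more involved than a moving argument and requires a genuinely new idea precisely because the naive genericity strategy fails in the way described above. If you want to salvage your outline, the place to invest effort is in constructing, from the given retraction, a \emph{new} pair $(f,g)$ for which the defined-ness of $g$ at $f(x)$ is built in from the start (e.g.\ by controlling the local rings $\OO_{X,x}\to\OO_{\CP^N,f(x)}\to\OO_{X,x}$ directly, as your first paragraph suggests), rather than trying to repair $g_0$ after the fact by linear-algebraic moves on $\CP^N$.
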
 

\begin{remark}
%By Theorem \ref{thm:banecki} and Noetherian induction, 
It follows that any smooth retract rational variety over an infinite field $k$ admits finitely many unirational parametrizations such that for any $L/k$ any $L$-rational point of $X$ is covered by one of them.
\end{remark}
%A generalization of the above theorem to stable rationality is contained in \cite{banecki-2};  the question whether any rational variety is uniformly rational, which goes back to Gromov, remains open. 

\subsection{Functorial birational invariants.}

Morally, we would like to talk about a functor from the “category’’ of smooth projective $k$-varieties with rational maps between them to another category.  
Since rational maps need not be composable, such a category does not exist (see however \cite{kahn-sujata,hotchkiss-stapleton}). 
For our purposes the following adhoc notion suffices.

\begin{definition} \label{def:bir'l-invariant}
Let $k$ be a field.  
A \emph{functorial birational invariant} $F$ of smooth projective $k$-varieties with values in a category $\mathcal A$ consists of
\begin{itemize}
    \item an object $F(X)\in \mathcal A$ for each smooth projective $k$-variety $X$, and
    \item a morphism $F(f)\in \Hom_{\mathcal A}(F(X),F(Y))$ for each rational map $f\colon X\dashrightarrow Y$,
\end{itemize}
such that if $f\colon X\dashrightarrow Y$ and $g\colon Y\dashrightarrow Z$ are composable, then $F(g\circ f)=F(g)\circ F(f)$, and $F(\id_X)=\id_{F(X)}$ for all $X$.
\end{definition}

Recall that an object $0\in\mathcal A$ in a category is a  \emph{zero object} if $\Hom(0,A)=\Hom(A,0)=\{\ast\}$ for all $A\in \mathcal A$.

\begin{lemma} \label{lem:bir'l-invariant}
Let $F$ be a functorial birational invariant with values in a category $\mathcal A$ admitting a zero object $0$ and such that $F(\CP^N_k)=0$ for all $N\geq 0$.
If $X$ is retract rational, then $F(X)=0$. 
\end{lemma}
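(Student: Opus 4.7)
The plan is to apply the functor $F$ directly to the retract rationality diagram \eqref{diag:retract-rational}. First I would unpack Definition \ref{def:retract} to obtain an integer $N\geq 0$ and rational maps $f\colon X\dashrightarrow \CP^N_k$ and $g\colon \CP^N_k\dashrightarrow X$ whose composition is defined and equals $\id_X$ as a rational map. Applying $F$ and invoking the functoriality axioms from Definition \ref{def:bir'l-invariant} immediately yields
\begin{equation*}
F(g)\circ F(f)=F(g\circ f)=F(\id_X)=\id_{F(X)}.
\end{equation*}

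The second step is to observe that the hypothesis $F(\CP^N_k)=0$ forces $F(f)\in \Hom_{\mathcal A}(F(X),0)$ and $F(g)\in \Hom_{\mathcal A}(0,F(X))$ to be the unique morphisms into and out of the zero object. Thus $\id_{F(X)}$ factors through $0$. The proof then reduces to the purely categorical statement that an object $A\in\mathcal A$ whose identity factors through a zero object is itself a zero object, and hence isomorphic to $0$. To verify this for $A=F(X)$, I would show that $\Hom_{\mathcal A}(F(X),B)$ and $\Hom_{\mathcal A}(B,F(X))$ are singletons for every $B\in\mathcal A$: given $\varphi,\psi\colon F(X)\to B$, the compositions $\varphi\circ F(g)$ and $\psi\circ F(g)$ both lie in the singleton $\Hom_{\mathcal A}(0,B)$ and are therefore equal; precomposing with $F(f)$ and using $F(g)\circ F(f)=\id_{F(X)}$ gives $\varphi=\psi$, and the dual argument handles $\Hom_{\mathcal A}(B,F(X))$.

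There is no real obstacle here: the whole argument is a one-line diagram chase once one knows that a retraction onto a zero object makes the domain a zero object. The only point requiring care is that the hypothesis on $\mathcal A$ is stated in terms of singleton $\Hom$-sets, so one must verify the zero-object universal property for $F(X)$ directly rather than by manipulating a ``zero morphism''.
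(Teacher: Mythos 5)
Your proposal is correct and follows essentially the same route as the paper: apply $F$ to the retraction diagram to get $F(g)\circ F(f)=\id_{F(X)}$ with $F(\CP^N_k)=0$, then conclude $F(X)\cong 0$. The only (immaterial) difference is in the final categorical step: the paper notes that $F(f)\circ F(g)$ is an endomorphism of the zero object and hence equals $\id_0$, so $F(f)$ and $F(g)$ are mutually inverse, whereas you verify the singleton-$\Hom$ universal property for $F(X)$ directly; both are valid one-line arguments.
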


\begin{proof} 
From diagram \eqref{diag:retract-rational} we have rational maps $f,g$ with $g\circ f=\id_X$.  
Thus $F(g)\circ F(f)=\id_{F(X)}$.  
Conversely, $F(f)\circ F(g)$ is an endomorphism of $F(\CP^N_k)=0$, hence equals $\id_0$.  
So $F(f),F(g)$ are inverses and $F(X)\cong 0$, as claimed.
\end{proof}

The somewhat adhoc Definition \ref{def:bir'l-invariant} was introduced to bypass the fact that smooth projective varieties with rational maps do not form a category.  
An alternative approach is to consider the category ${\rm Corr}(k)$ of Chow correspondences (restricted to smooth projective integral schemes): objects are smooth projective $k$-varieties, and
\[
\Hom_{{\rm Corr}(k)}(X,Y):=\CH_{\dim X}(X\times Y).
\]
If $\Gamma\in \CH_{\dim X}(X\times Y)$ and $\Omega\in \CH_{\dim Y}(Y\times Z)$, then
\[
\Omega\circ \Gamma=(\pr_{XZ})_\ast\bigl(\pr_{XY}^\ast \Gamma \cdot \pr_{YZ}^\ast \Omega\bigr)\in \CH_{\dim X}(X\times Z),
\]
where $\pr_{XY},\pr_{YZ},\pr_{XZ}$ are the respective projections from $X\times Y\times Z$.  
The above intersection product may be realized via a moving lemma---an operation unavailable for rational maps directly.

If $f\colon X\dashrightarrow Y$ is a rational map, its graph $\Gamma_f$ defines a correspondence, hence a morphism in ${\rm Corr}(k)$, again denoted by $f$.  
Unlike rational maps, correspondences can always be composed, so any $f\colon X\dashrightarrow Y$ and $g\colon Y\dashrightarrow Z$ yield a well-defined $g\circ f\in \Hom_{{\rm Corr}(k)}(X,Z)$.
We then have the following

\begin{lemma} \label{lem:functor->bir'l-invariant}
Let $F\colon {\rm Corr}(k)\to \mathcal A$ be an additive functor from the category ${\rm Corr}(k)$ of Chow correspondences of $k$-varieties to an additive category $\mathcal A$.
Assume that for any correspondence $\Omega\in \CH_{\dim X}(X\times Y)$  %between smooth projective $k$-varieties $X,Y$,
whose support does not dominate the first factor, 
the induced morphism $F(\Omega)\colon F(X)\to F(Y)$ is zero.  
Then the assignments $X\mapsto F(X)$ and $f\mapsto F(\Gamma_f)$ for a rational map $f$ define a functorial birational invariant.
\end{lemma}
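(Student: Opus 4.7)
The plan is to verify the two functoriality axioms of Definition \ref{def:bir'l-invariant}, using additivity of $F$ on ${\rm Corr}(k)$ together with the vanishing hypothesis on non-dominant correspondences. The identity axiom is immediate: $\Gamma_{\id_X}=\Delta_X$ is the identity morphism of $X$ in ${\rm Corr}(k)$, so $F(\Gamma_{\id_X})=\id_{F(X)}$ by functoriality. The main task is compatibility with composition: for composable rational maps $f\colon X\dashrightarrow Y$ and $g\colon Y\dashrightarrow Z$, I must show $F(\Gamma_{g\circ f})=F(\Gamma_g)\circ F(\Gamma_f)$. Functoriality of $F$ on ${\rm Corr}(k)$ identifies the right-hand side with $F(\Gamma_g\circ \Gamma_f)$, so it is enough to check that the cycle
\[
\Omega:=\Gamma_{g\circ f}-\Gamma_g\circ \Gamma_f\in \CH_{\dim X}(X\times Z)
\]
has support not dominating $X$: the hypothesis will then force $F(\Omega)=0$, and additivity of $F$ delivers the desired identity.

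To verify non-dominance of $\supp\Omega$, I would choose a dense open $U\subset X$ on which $f$ is a morphism with $f(U)$ contained in the domain of $g$, so that $g\circ f$ is a morphism on $U$ as well. Over $U\times Y\times Z$, the cycles $\pr_{XY}^\ast\Gamma_f$ and $\pr_{YZ}^\ast\Gamma_g$ restrict to the images of the morphisms $(u,z)\mapsto(u,f(u),z)$ and $(u,y)\mapsto(u,y,g(y))$ (the latter defined over $f(U)\subset \mathrm{dom}(g)$). These are smooth subschemes of complementary codimension, and a tangent-space computation at any point $(u,f(u),g(f(u)))$ shows that their tangent spaces already span the ambient tangent space; hence the intersection is transverse of expected dimension $\dim X$, reduced of multiplicity one, and coincides with the image of $U\to U\times Y\times Z$, $u\mapsto(u,f(u),g(f(u)))$. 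Pushing this forward by $\pr_{XZ}$ yields precisely $\Gamma_{g\circ f}|_{U\times Z}$, so $\Omega|_{U\times Z}=0$ and hence $\supp\Omega\subset (X\setminus U)\times Z$ fails to dominate $X$, as required.

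The only delicate point I anticipate is the transversality verification above: it is the cycle-theoretic incarnation of the classical statement that the composition of graph correspondences of two composable morphisms equals the graph of their composition with multiplicity one, a statement ultimately resting on regular embeddedness of graphs in a smooth ambient. Modulo this, the entire proof is a formal manipulation combining additivity of $F$, functoriality of $F$ on ${\rm Corr}(k)$, and the hypothesis on non-dominant correspondences.
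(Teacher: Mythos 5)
Your proof is correct and follows the same route as the paper: reduce to showing that $\Omega=\Gamma_{g\circ f}-\Gamma_g\circ\Gamma_f$ is represented by a cycle not dominating $X$, then combine additivity with the vanishing hypothesis. The paper simply asserts this representability, whereas you supply the justification via restriction to a common open of definition and a transversality computation; that added detail is sound (the localization sequence then gives a representative supported on $(X\setminus U)\times Z$).
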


\begin{proof}
Since $F$ is a functor, $F(\id_X)=\id_{F(X)}$.  
For composable rational maps $f,g$, we have to show that $F(g)\circ F(f)=F(g\circ f)$.
Since $F$ is a functor, we clearly have $F(\Gamma_g)\circ F(\Gamma_f)=F(\Gamma_g\circ \Gamma_f)$. 
However, since $f$ and $g$ are only rational maps, $\Gamma_{g\circ f}$ need not be equal to $\Gamma_g\circ \Gamma_f$.
Nonetheless, their difference
\[
\Omega:=\Gamma_{g\circ f}-\Gamma_g\circ \Gamma_f\in \CH_{\dim X}(X\times Z)
\]
can be represented by a cycle not dominating the first factor, hence $F(\Omega)=0$ by assumption.  
Thus
\[
F(g)\circ F(f)=F(\Gamma_g\circ \Gamma_f)=F(\Gamma_{g\circ f}-\Omega)=F(\Gamma_{g\circ f})=F(g\circ f),
\]
as claimed.
\end{proof}

\subsection{Zero-cycles and decompositions of the diagonal.} \label{subsection:CH0}

If $X$ is a smooth projective $k$-variety, we may consider the Chow group of $0$-cycles $\CH_0(X)$ modulo rational equivalence.
This defines a functor
\begin{align} \label{def:functor-CH0}
\CH_0\colon {\rm Corr}(k)\longrightarrow {\rm Ab}, \quad \quad  \CH_{\dim X}(X\times Y) \ni \Gamma\mapsto \Gamma_\ast\colon \CH_0(X)\to \CH_0(Y)
\end{align}
to the category of abelian groups, where $\Gamma_\ast(z)=p_{Y\ast}(\Gamma\cdot p_{X}^\ast (z))$ with $p_X\colon X\times Y\to X$ and $p_Y\colon X\times Y\to Y$. 

\begin{example} \label{ex:CH_0}
The functor $\CH_0$ from \eqref{def:functor-CH0} is a functorial birational invariant of smooth projective varieties over $k$ (see Definition \ref{def:bir'l-invariant}).
This follows from Lemma \ref{lem:functor->bir'l-invariant}: if $\Omega\in \CH_{\dim X}(X\times Y)$ is a correspondence that does not dominate $X$, then the induced map $\Omega_\ast\colon \CH_0(X)\to \CH_0(Y)$ is zero.
Indeed, via the moving lemma, any zero-cycle in $X$ can be represented by a cycle that does not meet $\pr_X(\Omega)\subset X$ and so $\Omega\cdot p_X^\ast (z)=0$. 
\end{example}

Note that $\CH_0$ does not satisfy the vanishing assumption on projective space needed to apply Lemma \ref{lem:bir'l-invariant}, since $\CH_0(\CP^N_k)\cong \Z$ is generated by the class of any $k$-point.  
To remedy this, consider the degree map
\[
\deg\colon \CH_0(X)\longrightarrow \Z,\quad \sum a_i[x_i]\mapsto \sum a_i \deg(\kappa(x_i)/k),
\]
and define the degree-zero part
\[
\CH_0(X)_0 \coloneqq \ker(\deg).
\]
The degree is preserved under correspondences, so $\CH_0(-)_0$ yields, as above, a functorial birational invariant for smooth projective $k$-varieties.  
This invariant vanishes on projective space and thus obstructs retract rationality by Lemma \ref{lem:bir'l-invariant}.  
On the other hand, it is too weak over algebraically closed fields $k=\bar k$: if $X$ is rationally chain connected over $k=\bar k$ (in particular, if $X$ is Fano, see \cite{campana,KMM}), then $\CH_0(X)_0=0$; this holds in particular for smooth hypersurfaces $X\subset \CP^{n+1}_k$ of degree $d\leq n+1$. 
Rational (chain) connectedness is a geometric property: for any algebraically closed extension $L/k$, any two closed points of $X_L$ are connected by a chain of rational curves.
Allowing non-closed field extensions therefore adds nontrivial additional information: 

\begin{lemma} \label{lem:CH_0(X)_0}
Let $L/k$ be a field extension.  
Then $X\mapsto \CH_0(X_L)_0$ defines a functorial birational invariant which vanishes on projective space.
\end{lemma}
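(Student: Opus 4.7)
The plan is to produce $\CH_0(-_L)_0$ as an additive functor on the category of Chow correspondences $\mathrm{Corr}(k)$, and then to deduce the claim from Lemma \ref{lem:functor->bir'l-invariant} exactly as was done for $\CH_0(-)_0$ in Example \ref{ex:CH_0} over the ground field. In other words, I would factor the desired invariant as the composition
\[
\mathrm{Corr}(k) \xrightarrow{\;(-)_L\;} \mathrm{Corr}(L) \xrightarrow{\;\CH_0(-)_0\;} \mathrm{Ab},
\]
where the first functor is base change and the second is the invariant already known (by the discussion following Example \ref{ex:CH_0}) to vanish on projective space and to annihilate correspondences whose support does not dominate the first factor.

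First I would define the base change functor on objects by $X \mapsto X_L$ and on a correspondence $\Gamma \in \CH_{\dim X}(X \times_k Y)$ by flat pullback under $X_L \times_L Y_L \to X \times_k Y$, producing $\Gamma_L \in \CH_{\dim X_L}(X_L \times_L Y_L)$. Additivity is immediate, $F((\id_X)_L) = \id_{X_L}$ holds since the diagonal pulls back to the diagonal, and functoriality $(\Omega \circ \Gamma)_L = \Omega_L \circ \Gamma_L$ follows from compatibility of proper pushforward, flat pullback, and intersection product with flat base change; this is the only nontrivial point and is standard (it is the statement that the composition of correspondences commutes with flat base change).

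Next, composing with $\CH_0(-)_0\colon \mathrm{Corr}(L)\to \mathrm{Ab}$ gives an additive functor $F\colon \mathrm{Corr}(k)\to \mathrm{Ab}$ with $F(X)=\CH_0(X_L)_0$. To apply Lemma \ref{lem:functor->bir'l-invariant}, I must check that any correspondence $\Omega \in \CH_{\dim X}(X\times_k Y)$ supported on a cycle not dominating $X$ induces the zero map $F(X)\to F(Y)$. If $\pr_X(\supp \Omega)\subsetneq X$, then each component of $\Omega_L$ is supported in $\pr_X(\supp \Omega)_L \times_L Y_L$, which still fails to dominate $X_L$. By Example \ref{ex:CH_0} applied over the field $L$, the induced map $(\Omega_L)_\ast \colon \CH_0(X_L)\to \CH_0(Y_L)$ is zero, and hence so is $F(\Omega)$. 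Thus Lemma \ref{lem:functor->bir'l-invariant} yields the functorial birational invariant $X\mapsto \CH_0(X_L)_0$, $f\mapsto F(\Gamma_f)$.

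Finally, vanishing on projective space follows from $(\CP^N_k)_L = \CP^N_L$ together with the fact that $\CH_0(\CP^N_L) \cong \Z$ via the degree map, so that $\CH_0(\CP^N_L)_0 = 0$. The only real obstacle in this argument is the verification that the composition of correspondences is compatible with base change, which however is the standard flat base change formula for Chow groups and requires no new input.
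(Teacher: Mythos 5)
Your proposal is correct and follows essentially the same route as the paper, which simply says "as in Example \ref{ex:CH_0}" together with the observation that correspondences preserve the degree of zero-cycles; you have merely made explicit the implicit steps (base change of correspondences commutes with composition, non-dominating correspondences stay non-dominating after base change, and $\CH_0(\CP^N_L)_0=0$). No gap.
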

\begin{proof}
As in Example \ref{ex:CH_0}, because $\deg(\Gamma_\ast z)=\deg z$ for $z\in \CH_0(X)$ and $\Gamma\in \CH_{\dim X}(X\times Y)$.
\end{proof}

If $X$ is retract rational, Lemmas \ref{lem:bir'l-invariant} and \ref{lem:CH_0(X)_0} imply $\CH_0(X_L)_0=0$ for all $L/k$.  
This motivates: % the following notion \cite[\S1.2]{ACTP}.

\begin{definition}[{\cite[\S1.2]{ACTP}}] \label{def:CH0-trivial}
A projective variety $X$ over $k$ is universally $\CH_0$-trivial if, for all $L/k$, the degree map $\deg\colon \CH_0(X_L)\to \Z$ is an isomorphism.  
Equivalently, $X$ has a zero-cycle of degree $1$ and $\CH_0(X_L)_0=0$ for all $L/k$.
\end{definition}

\begin{lemma} \label{lem:retract-rational->CH0-trivial}
If $X$ is a smooth projective retract rational variety over $k$, then it is universally $\CH_0$-trivial.
\end{lemma}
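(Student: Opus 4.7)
The plan is to combine Lemmas \ref{lem:bir'l-invariant} and \ref{lem:CH_0(X)_0} to kill the kernel of the degree map after every base change, and then separately exhibit a zero-cycle of degree one on $X$ by pushing forward the class of a rational point along a retraction from projective space.

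First I would apply Lemma \ref{lem:CH_0(X)_0}: the assignment $X\mapsto \CH_0(X_L)_0$ is a functorial birational invariant of smooth projective $k$-varieties with values in the category of abelian groups, and it vanishes on $\CP^N_k$ for every $N\geq 0$. Since the category of abelian groups admits a zero object, Lemma \ref{lem:bir'l-invariant} immediately implies that if $X$ is retract rational, then $\CH_0(X_L)_0 = 0$ for every field extension $L/k$. This yields the injectivity of the degree map $\deg\colon \CH_0(X_L)\to \Z$.

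Next, to obtain surjectivity of the degree map, I would fix rational maps $f\colon X\dashrightarrow \CP^N_k$ and $g\colon \CP^N_k\dashrightarrow X$ as in Definition \ref{def:retract}, with $g\circ f = \id_X$. Applying the base-changed version of the functor \eqref{def:functor-CH0}, and using the same cycle-support argument as in the proof of Lemma \ref{lem:functor->bir'l-invariant} to see that the equality $g_\ast\circ f_\ast=\id$ survives passage from $\Gamma_g\circ \Gamma_f$ to $\Gamma_{g\circ f}$ on $\CH_0$, one obtains a split surjection $g_\ast\colon \CH_0(\CP^N_L)\to \CH_0(X_L)$. Since $\CH_0(\CP^N_L)\cong \Z$ is generated by the class of an $L$-rational point and pushforward preserves the degree of a zero-cycle, the image $g_\ast(1)\in \CH_0(X_L)$ has degree one. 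Specializing to $L=k$ gives a degree-one zero-cycle on $X$.

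Combining these, $\deg\colon \CH_0(X_L)\to \Z$ is an isomorphism for every $L/k$, which is exactly the definition of universal $\CH_0$-triviality in Definition \ref{def:CH0-trivial}. There is no substantive obstacle here: the hard work has been packaged into Lemmas \ref{lem:bir'l-invariant} and \ref{lem:CH_0(X)_0}. The only step deserving a brief verification is that $g_\ast$ sends a class of degree one to a class of degree one, which is immediate once one represents $g_\ast(z)$ as $(\pr_X)_\ast(\Gamma_g\cdot \pr_{\CP^N}^\ast z)$ and applies the projection formula after picking an $L$-point inside the open locus where $g$ is defined.
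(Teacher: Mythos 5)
Your argument is correct and is essentially the paper's own proof: both obtain $\CH_0(X_L)_0=0$ for all $L/k$ from Lemmas \ref{lem:bir'l-invariant} and \ref{lem:CH_0(X)_0}, and both produce a degree-one zero-cycle as $(\Gamma_g)_\ast z$ for a degree-one $z\in\CH_0(\CP^N)$, using that correspondences preserve degree. The extra discussion of the split surjection is harmless but not needed for this step.
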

\begin{proof}
For any field extension $L/k$, we have $\CH_0(X_L)_0=0$ by Lemmas \ref{lem:bir'l-invariant} and \ref{lem:CH_0(X)_0}.  
A zero-cycle of degree $1$ is obtained by $(\Gamma_g)_\ast z\in \CH_0(X)$ for some $z\in \CH_0(\CP^N)$ of degree $1$. 
(In fact, $X$ contains a $k$-point because it is unirational via the map $g$ in \eqref{diag:retract-rational}; the rational point arises from the valuative criterion of properness by restricting $g$ to a smooth curve in $\CP^N$ which meets the locus of definition of $g$ and contains a $k$-rational point.)
\end{proof}

%The next notion, due to Bloch \cite{bloch} (using an idea of Colliot-Th\'el\`ene) and Bloch--Srinivas \cite{BS}, has been studied further in \cite{ACTP,voisin}.
The following notion goes back to Bloch \cite{bloch} and Bloch--Srinivas \cite{BS}, and has for instance been studied in \cite{ACTP} and \cite{voisin}.

\begin{definition}
A variety $X$ over a field $k$ admits a \emph{decomposition of the diagonal} if there exist a zero-cycle $z\in \CH_0(X)$ and a cycle $\Gamma\in \CH_{\dim X}(X\times X)$ not dominating the first factor such that
\begin{equation} \label{eq:diagonal-decomposition}
\Delta_X = X\times z + \Gamma \in \CH_{\dim X}(X\times X).
\end{equation}
\end{definition}

Using the localization sequence \cite[Prop.~1.8]{fulton} and a limit argument (see \cite[Lemma~7.3]{Sch-survey}), this is equivalent to requiring that for some $z\in \CH_0(X)$,
\begin{equation} \label{eq:diagonal-decomposition-zero-cycle}
\delta_X = z_{k(X)} \in \CH_0(X_{k(X)}),
\end{equation}
where $\delta_X$ denotes the $k(X)$-rational point of $X_{k(X)}$ induced by the diagonal and $z_{k(X)}$ is the base change of $z$.  

\begin{proposition}\label{prop:diagonal}
For a smooth projective variety $X$ over a field $k$, the following are equivalent:
\begin{enumerate}[label={(\arabic*)}]
    \item $X$ admits a decomposition of the diagonal;\label{item:prop:diagonal:1}
    \item $X$ is universally $\CH_0$-trivial;\label{item:prop:diagonal:2}
    \item $\CH_0(X_{k(X)})_0=0$ and $X$ has a zero-cycle of degree $1$.\label{item:prop:diagonal:3}
\end{enumerate}
\end{proposition}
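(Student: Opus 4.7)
The plan is to prove the cyclic implications $\ref{item:prop:diagonal:1} \Rightarrow \ref{item:prop:diagonal:2} \Rightarrow \ref{item:prop:diagonal:3} \Rightarrow \ref{item:prop:diagonal:1}$. The crucial bridge is the reformulation \eqref{eq:diagonal-decomposition-zero-cycle} already recorded in the excerpt: a diagonal decomposition is equivalent to the equation $\delta_X = z_{k(X)}$ in $\CH_0(X_{k(X)})$ for some zero-cycle $z \in \CH_0(X)$. With this in hand, the remainder is formal manipulation of correspondences acting on $\CH_0$.

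For $\ref{item:prop:diagonal:1} \Rightarrow \ref{item:prop:diagonal:2}$, I would fix a decomposition $\Delta_X = X \times z + \Gamma$ and first extract $\deg(z) = 1$ by taking degrees in \eqref{eq:diagonal-decomposition-zero-cycle}; in particular $X$ carries a zero-cycle of degree $1$. For an arbitrary extension $L/k$ and a class $\alpha \in \CH_0(X_L)_0$, I would base-change the decomposition and apply it as a self-correspondence. The left-hand side reproduces $\alpha$, while $(X_L \times z_L)_\ast \alpha = \deg(\alpha)\cdot z_L = 0$ and $(\Gamma_L)_\ast \alpha = 0$ by the argument of Example \ref{ex:CH_0}: each component of $\Gamma$ whose first projection is a proper subvariety of $X$ base-changes to a component of $\Gamma_L$ whose first projection is a proper subvariety of $X_L$, and so $\alpha$ can be represented disjointly from it via the moving lemma. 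Therefore $\alpha = 0$, establishing universal $\CH_0$-triviality.

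The remaining implications are short. For $\ref{item:prop:diagonal:2} \Rightarrow \ref{item:prop:diagonal:3}$, it suffices to specialize to $L = k(X)$. For $\ref{item:prop:diagonal:3} \Rightarrow \ref{item:prop:diagonal:1}$, I would pick a zero-cycle $z \in \CH_0(X)$ of degree $1$ granted by hypothesis; the difference $\delta_X - z_{k(X)} \in \CH_0(X_{k(X)})$ then has degree zero, hence vanishes, giving $\delta_X = z_{k(X)}$. Invoking the localization-sequence-plus-limit equivalence recalled just before \eqref{eq:diagonal-decomposition-zero-cycle} delivers a $\Gamma$ on $X \times X$ not dominating the first factor with $\Delta_X = X \times z + \Gamma$. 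The only step requiring genuine care is the behaviour of the hypothesis ``not dominating the first factor'' under arbitrary base change $L/k$ in the proof of $\ref{item:prop:diagonal:1} \Rightarrow \ref{item:prop:diagonal:2}$; this is however automatic, component by component, since dimensions of irreducible components and of their scheme-theoretic images in $X$ are preserved by flat base change, so the main obstacle is organizational rather than substantive.
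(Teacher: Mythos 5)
Your proposal is correct and follows essentially the same route as the paper: the cycle of implications $(1)\Rightarrow(2)\Rightarrow(3)\Rightarrow(1)$, with $(1)\Rightarrow(2)$ proved by letting the base-changed decomposition act on $\CH_0(X_L)_0$ (the term $X\times z$ killing degree-zero classes and $\Gamma$ acting trivially by the moving-lemma argument of Example \ref{ex:CH_0}), and $(3)\Rightarrow(1)$ via the reformulation \eqref{eq:diagonal-decomposition-zero-cycle}. The only cosmetic difference is how $\deg z=1$ is extracted (you take degrees in \eqref{eq:diagonal-decomposition-zero-cycle}; the paper intersects with the second factor after passing to $\bar k$), which changes nothing of substance.
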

\begin{proof}
Assume first that $X$ admits a decomposition of the diagonal as in \eqref{eq:diagonal-decomposition}.
Base change to an algebraic closure of $k$ and intersection with the second factor shows that $\deg z=1$. 
Hence, $X$ contains a zero-cycle of degree $1$.
We then consider the action of the diagonal $\Delta_X$ on $\CH_0(X_L)$ and note that $\Gamma_\ast$ acts trivially, while $X\times z$ sends a zero-cycle $\alpha$ to $(\deg \alpha)\cdot z$.
Since the diagonal acts as the identity, we find that $\CH_0(X_L)=\Z\cdot z_L$ for all $L/k$ and so $X$ is universally $\CH_0$-trivial.
Hence,  \ref{item:prop:diagonal:1} $\Rightarrow$ \ref{item:prop:diagonal:2}.
The implication  \ref{item:prop:diagonal:2} $\Rightarrow$ \ref{item:prop:diagonal:3} is trivial.
Finally, assume \ref{item:prop:diagonal:3} and let $z\in \CH_0(X)$ be a zero-cycle of degree $1$.
Then $\delta_X-z_{k(X)}$ is a zero-cycle of degree zero and hence vanishes by assumptions.
This proves \eqref{eq:diagonal-decomposition-zero-cycle}, which is equivalent to a decomposition as in \eqref{eq:diagonal-decomposition}. 
\end{proof}

\subsection{Very general hypersurfaces.}
A hypersurface $X \subset \CP^{n+1}_k$ over a field $k$ is called \emph{very general} if $X$ is isomorphic to a hypersurface $\{F=0\}\subset \CP^{n+1}_k$ cut out by a polynomial $F=\sum_{|I|=d} a_Ix^I$ whose coefficients $a_I$  are algebraically independent over the prime field of $k$.
%i.e.\ they do not satisfy any nonzero polynomial relation over the prime field of $k$.  
We also say that $X$ is very general if its base change to some field extension $L/k$ satisfies this property.
Very general hypersurfaces exist over $k$ if it has sufficiently large transcendence degree over its prime field.
For example, the elliptic curve $zy^2 = x^3 + z^2x + tz^3$ over $\Q(t)$ is very general because a plane cubic curve over $\C$ whose 10 coefficients are algebraically independent over $\Q$ is isomorphic to a cubic curve as above for some transcendental element $t\in \C$.
%it admits a base change that is isomorphic to a plane curve over $\C$ cut out by a cubic equation whose $10$ coefficients . 

There is an abstract version of this notion.  
Let $\pi\colon \mathcal X \to B$ be a proper morphism of varieties over $k$, and let $k_0 \subset k$ be a subfield such that $\pi$ admits a model $\pi_0\colon \mathcal X_0 \to B_0$ over $k_0$.  
Choose $k_0$ to be finitely generated over its prime field and of minimal transcendence degree among all such fields of definition.  
A fiber of $\pi$ at a point $b\in B(k)$ is very general if $ b $ maps to the generic point under the projection $B=B_0\times_{k_0}k \to B_0$.  
Equivalently, there is an inclusion $k_0(B_0) \subset k$ of fields which identifies $ X_b $ with the base change of the generic fiber $\mathcal X_0 \times_{B_0} k_0(B_0)$ of $ \pi_0 $.  
If $ k $ is algebraically closed, this means that $ X_b $ is abstractly isomorphic to (a base change of) the geometric generic fiber of $\pi_0$, and hence, up to a base change, to that of $\pi$.  
In particular, if $k=\bar k$, then $X_b$ specializes to any closed fiber of $\pi$ (cf.\ \cite[\S2.2]{Sch-duke}). 
The locus of points $b\in B(k)$ such that the corresponding fiber is very general corresponds to the $k$-points of the complement of a countable union of proper closed subsets of $B$.

If $B$ is a fine moduli space or a component of the Hilbert scheme that parametrizes a certain type of projective varieties, e.g.\ hypersurfaces of degree $d$ in $\CP^{n+1}_k$, with universal family $\pi\colon \mathcal X\to B$, then we say that such a variety $X$ over $k$ is very general if up to some base change to a larger field, $X$ is isomorphic to a very general fiber of $\pi$.
Applied to the universal family of hypersurfaces over $\CP(H^0(\mathbb P^{n+1},\mathcal O(d)))$,  this recovers the above ad hoc definition of very general hypersurfaces.
(As in the case of elliptic curves, the redundancy in Hilbert schemes compared to moduli spaces is resolved by allowing base changes of $X$ to larger fields.)
    
In the above discussion, several base changes were allowed.
To justify this, note that if $X$ is a variety over $k$ that is rational, stably rational, or retract rational, then the same holds after any field extension $L/k$.  
Conversely, if $X_L$ has this property and $k$ is algebraically closed, then $X$ itself does.  
Indeed, one may reduce to the case where $L/k$ is finitely generated, hence the function field of a $k$-variety $Z$, over which the relevant rational maps spread out; specializing at a general $k$-rational point of $Z$ gives the claim.  

Since rationality is a closed property in smooth projective families (see \cite{NS,KT}), we obtain the following:

\begin{theorem}[\cite{NS,KT}]
If a very general hypersurface of degree $d$ and dimension $n$ over some field of characteristic zero is (stably) rational, then every smooth hypersurface of degree $d$ and dimension $n$ over any algebraically closed field of characteristic zero is (stably) rational.
\end{theorem}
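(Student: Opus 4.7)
The plan is to reduce via the Lefschetz principle to the case of the ground field $\C$, and then to apply the specialization theorem for (stable) rationality in smooth proper families over a base of characteristic zero, due to Nicaise--Shinder \cite{NS} and Kontsevich--Tschinkel \cite{KT}.

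First, I would pass to $\overline{k}$ in the hypothesis (since (stable) rationality is preserved under field extensions) and set up the universal family. Let $U\subset \CP(H^0(\CP^{n+1},\OO(d)))_\Q$ be the open locus of smooth hypersurfaces, and let $\pi\colon \mathcal X\to U$ denote the universal smooth projective family. By the discussion of very general hypersurfaces in the excerpt, the hypothesis translates into (stable) rationality of the geometric generic fiber of $\pi_\C\colon \mathcal X_\C\to U_\C$; this uses that $\pi$ is defined over $\Q$ and that any two embeddings of the countable field $\Q(U)$ into an algebraically closed field of characteristic zero give isomorphic geometric generic fibers, so the property does not depend on the chosen model.

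Next I would invoke the specialization theorem of \cite{NS,KT}: if $\mathcal X\to \Spec R$ is smooth and proper over a DVR $R$ of characteristic zero, then (stable) rationality of the geometric generic fiber implies (stable) rationality of the special fiber. Applied along a smooth curve in $U_\C$ through any prescribed closed point $u\in U_\C$ (using the DVR given by the local ring at $u$), this yields that every smooth hypersurface over $\C$ of degree $d$ and dimension $n$ is (stably) rational.

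Finally, to extend from $\C$ to an arbitrary algebraically closed field $K$ of characteristic zero, let $X\subset \CP^{n+1}_K$ be smooth. It descends to a model $X_0/k_0$ over a finitely generated subfield $k_0\subset K$; fix an embedding $k_0\hookrightarrow \C$ (possible since $\mathrm{trdeg}(k_0/\Q)<\infty$). By the previous step $X_0\times_{k_0}\C$ is (stably) rational, and a witnessing birational map spreads out over some smooth $k_0$-variety $V$ with $k_0(V)\subset \C$. Specializing at a $\overline{k_0}$-point of $V$ gives (stable) rationality of $X_0\times_{k_0}\overline{k_0}$, and base change along $\overline{k_0}\hookrightarrow K$ yields the claim. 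The main obstacle is the specialization theorem of \cite{NS,KT}, whose proof relies on motivic integration and weak factorization; the Lefschetz and spreading-out steps above are essentially routine once one accepts that input.
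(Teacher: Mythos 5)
Your proposal is correct and follows essentially the same route as the paper: the statement there is deduced from the Nicaise--Shinder/Kontsevich--Tschinkel specialization of (stable) rationality in smooth proper families of characteristic zero, combined with the standard spreading-out and base-change arguments for very general fibers given in the surrounding discussion, which is exactly what you do. The one step stated too loosely is the reduction to a DVR ``along a smooth curve through $u$'': the generic fiber of the family restricted to such a curve is not the very general fiber of the whole family, so one must either iterate DVR specializations along a flag of subvarieties reaching $u$ or invoke directly the closedness of the (stably) rational locus as the paper does --- a routine repair that does not affect the argument.
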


\section{Degeneration and unramified cohomology} \label{sec:deg+coho}
\subsection{Degeneration techniques.} 

In 2015, Voisin \cite{voisin} introduced cycle-theoretic methods to prove stable irrationality for new classes of varieties. 
She showed that very general quartic double solids $X \to \CP^3_\C$ do not admit a decomposition of the diagonal, hence are not retract rational. 
The key idea is a degeneration argument: $X$ specializes to a singular quartic double solid $Y \to \CP^3_\C$, birational to the Artin--Mumford example \cite{AM}. 
A resolution $Y'$ of $Y$ has torsion in $H^3(Y',\Z)$, which rules out a decomposition of the diagonal by analyzing the action of \eqref{eq:diagonal-decomposition} on cohomology. 
Since decomposability of the diagonal specializes well in algebraic families, $X$ would admit a decomposition only if $Y$ did. 
Careful analysis of the singularities of $Y$ then yields a contradiction.  

The passage from $Y$ to $Y'$ is subtle but essential: for instance, a smooth cubic surface is rational and specializes to a cone over an elliptic curve; while the cone admits a decomposition of the diagonal, its resolution does not.  

Colliot-Th\'el\`ene--Pirutka \cite{CTP} extended Voisin’s method to more singular degenerations over arbitrary discrete valuation rings, using Fulton's specialization map on Chow groups \cite[\S 20.3]{fulton}. 
The distinction between $Y$ and $Y'$ is captured by requiring the resolution $\tau \colon Y' \to Y$ to be $\CH_0$-trivial, i.e.\ $\tau_\ast$ induces isomorphisms on Chow groups of zero-cycles after any field extension.  
As an application, they showed that very general quartic threefolds do not admit a decomposition of the diagonal by specializing to a singular model of the Artin--Mumford example with a $\CH_0$-trivial resolution.  
Totaro \cite{totaro} further generalized the result, proving that very general hypersurfaces $X \subset \CP^{n+1}_\C$ of degree $d \geq 2\lceil (n+2)/3\rceil$ do not admit a decomposition of the diagonal. 
He specializes to mildly singular hypersurfaces in characteristic $2$ admitting regular differential forms by work of Koll\'ar \cite{kollar}.  

Hassett--Pirutka--Tschinkel \cite{HPT} used the techniques from \cite{voisin,CTP,CTO} to construct a smooth projective family $\mathcal X \to B$ of complex fourfolds with the remarkable property that rational fibers are dense (in the analytic topology), while a very general fiber is not (retract) rational. 
This settled the longstanding problem of whether rationality is an open condition in smooth families. 
The examples are quadric surface bundles over surfaces. 
Density of rational fibers follows from a Hodge-theoretic criterion of Voisin, while non-rationality of the very general fiber comes from a degeneration to a singular quadric surface bundle over $\CP^2$ with nontrivial Artin--Mumford invariant discovered in \cite{HPT}, see also \cite{pirutka}.

\subsection{Hypersurfaces under a logarithmic degree bound.}

In \cite{Sch-duke,Sch-JAMS}, the degeneration techniques of \cite{voisin,CTP} were refined so that an explicit resolution of singularities of the special fiber $Y$ and a detailed analysis of $Y' \to Y$ are often unnecessary.  
Instead, a more flexible cohomological vanishing criterion can be applied, see \cite[Proposition 3.1]{Sch-JAMS}.  
We state the result here in the context of unramified cohomology; see \cite{Sch-survey} for further details.

\begin{theorem}[\cite{Sch-duke,Sch-JAMS}] \label{thm:degeneration-O+V}
Let $R$ be a discrete valuation ring with algebraically closed residue field $k$ and fraction field $K$.
Let $\mathcal X\to \Spec R$ be a proper flat $R$-scheme with special fiber $Y=\mathcal X\times_R k$ and geometric generic fiber $X=\mathcal X\times_R \overline K$.
Assume that $X$ and $Y$ are irreducible and that the following holds:
\begin{enumerate}
    \item[(O)] There is a nontrivial class $\alpha\in H^i_{nr}(k(Y)/k,\mu_m^{\otimes j})$ for some integers $i$, $j$, and $m$, with $i\geq 1$;\label{item:thm:deg-Obstruction}
    \item[(V)] There is a regular alteration (e.g.\ a resolution) $\tau\colon Y'\to Y$ such that $\tau^\ast \alpha$ vanishes when restricted to any scheme point $x\in Y'$ with $\tau(x)\in Y^{\sing}$: $(\tau^\ast \alpha)|_x=0\in H^i(\kappa(x),\mu_m^{\otimes j})$.\label{item:thm:deg-Vanishing}
\end{enumerate}
Then $X$ does not admit a decomposition of the diagonal.
\end{theorem}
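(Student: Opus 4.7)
Assume for contradiction that $X$ admits a decomposition of the diagonal, $\Delta_X = X\times z + \Gamma \in \CH_{\dim X}(X\times X)$, with $\Gamma$ not dominating the first factor. My plan is to transfer this decomposition first to $Y$ by specialization, then to $Y'$ by a correspondence calculation, and finally to let the resulting identity act on the class $\tau^*\alpha$; hypotheses (O) and (V) will then force a contradiction. First, I apply Fulton's specialization map \cite[\S20.3]{fulton} to the flat proper family $\mathcal X\times_R \mathcal X \to \Spec R$. Since specialization is compatible with products and with the closures of cycles, it sends the decomposition to an identity
\[
\Delta_Y = Y\times z_Y + \Gamma_Y \in \CH_{\dim Y}(Y\times Y),
\]
where $\Gamma_Y$ still does not dominate the first factor.

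Next I use the graph $\Gamma_\tau \in \CH_{\dim Y}(Y'\times Y)$ and its transpose to compose both sides of this identity with $\Gamma_\tau$ on the right and $\Gamma_\tau^t$ on the left. A direct correspondence computation, in which the excess components of the fibre product $Y'\times_Y Y'$ over $Y^{\sing}$ are isolated, yields an identity
\[
N\cdot \Delta_{Y'} = Y'\times z' + \Gamma' + E' \in \CH_{\dim Y'}(Y'\times Y'),
\]
where $N=\deg\tau$, the cycle $\Gamma'$ does not dominate the first factor, and the first-factor support of $E'$ is contained in $\tau^{-1}(Y^{\sing})$.

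Finally, I let each term above act as a Chow correspondence on $\tau^*\alpha \in H^i_{nr}(k(Y')/k,\mu_m^{\otimes j})$, using Rost's theory of cycle modules, in which correspondences on smooth proper varieties act naturally on unramified cohomology. The diagonal contributes $N\cdot \tau^*\alpha$; the term $Y'\times z'$ vanishes because $i\geq 1$ and $k$ is algebraically closed, so any positive-degree unramified class evaluates trivially on a zero-cycle; $\Gamma'$ vanishes because correspondences that do not dominate the first factor act as zero on $H^*_{nr}$; and $E'$ vanishes by hypothesis (V), which says precisely that $\tau^*\alpha$ restricts trivially on every scheme point of the first-factor support of $E'$. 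This forces $N\cdot \tau^*\alpha = 0$ in $H^i_{nr}(k(Y')/k,\mu_m^{\otimes j})$. Choosing $\tau$ with $\deg\tau$ coprime to $m$ (by Hironaka in characteristic zero, or by Gabber's refinement of de Jong's alterations in general) and invoking the birational invariance of unramified cohomology for smooth proper varieties, we conclude $\alpha = 0$, contradicting (O).

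The main obstacle will be the lifting step: producing, from the singular decomposition on $Y\times Y$, a clean identity on $Y'\times Y'$ whose error cycle $E'$ is supported only over $\tau^{-1}(Y^{\sing})$ in the first factor. The book-keeping of correspondences on the singular target $Y$ is delicate, and the fact that one needs only this support condition on $E'$---rather than the stronger universal $\CH_0$-triviality of $\tau$ required in \cite{CTP}---is exactly what makes the cohomological vanishing hypothesis (V) the right replacement, giving Theorem \ref{thm:degeneration-O+V} its wider scope.
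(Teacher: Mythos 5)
Your overall strategy---specialize the decomposition to $Y$ via Fulton's map, transfer it to the regular alteration $Y'$, let it act on $\tau^\ast\alpha$ through the correspondence/cycle-module action on unramified cohomology, and kill the constant term using $k=\bar k$, $i\geq 1$ and the error terms using (V)---is indeed the strategy of the proofs in \cite{Sch-duke,Sch-JAMS} (the survey itself does not reproduce the argument). However, the central step of your plan is incorrect as formulated. The identity you claim from composing with $\Gamma_\tau$ and $\Gamma_\tau^t$, namely $N\cdot\Delta_{Y'}=Y'\times z'+\Gamma'+E'$, is false whenever $N=\deg\tau>1$: one has $\Gamma_\tau^t\circ\Gamma_\tau=[Y'\times_YY']$, and besides the diagonal this cycle contains components dominating $Y$ (e.g.\ graphs of generic deck transformations), which dominate \emph{both} factors of $Y'\times Y'$ and are not supported over $Y^{\sing}$; they act on cohomology as genuine correspondences, not as multiples of the identity. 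What saves the argument in \cite{Sch-JAMS} is that one never needs such a cycle identity: one evaluates the correspondence only on the class $\tau^\ast\alpha$, which is pulled back from $Y$, so that the off-diagonal dominating components contribute $\mathrm{res}\circ\mathrm{cores}$ terms and one obtains $N\cdot\tau^\ast\alpha$ directly from $\mathrm{cores}_{k(Y')/k(Y)}\circ\mathrm{res}=N$. Equivalently, the proof is run at the level of the zero-cycle identity $\delta_Y=z_{k(Y)}$ in $\CH_0(Y_{k(Y)})$, pulled back to the generic fibre of $\tau$ and paired with $\tau^\ast\alpha$ via the Merkurjev pairing, rather than at the level of $\CH_{\dim Y'}(Y'\times Y')$. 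Relatedly, your support bookkeeping is internally inconsistent: if the first-factor support of $E'$ were contained in $\tau^{-1}(Y^{\sing})$, then $E'$ would act as zero for the same reason as $\Gamma'$ and hypothesis (V) would never be used. The terms that actually require (V) are those which dominate the first factor but whose restriction to the generic fibre of the first projection is a zero-cycle supported over $Y^{\sing}$; (V) is then applied to the restrictions of $\tau^\ast\alpha$ at those points.

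A second gap is the final step. Your argument only yields $N\cdot\tau^\ast\alpha=0$, and you propose to choose $\tau$ of degree coprime to $m$. But $\tau$ is part of the data of hypothesis (V): you are given \emph{one} alteration satisfying the vanishing condition, and replacing it by a prime-to-$m$ alteration of Gabber--Temkin type does not preserve (V) (passing to a common refinement only multiplies the degree by a further factor of $\deg\tau$). For a resolution ($N=1$, the case flagged in the statement) this issue disappears, since then $k(Y')=k(Y)$ and $\tau^\ast\alpha=\alpha\neq 0$ gives the contradiction immediately; for alterations of larger degree one must either assume $\tau^\ast\alpha\neq 0$ (or a degree condition), which is how the precise statements in \cite{Sch-JAMS,Sch-ANT} are organized. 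Finally, a minor point: the decomposition is given on the \emph{geometric} generic fiber, so before specializing you must descend it to a finite extension $K'/K$ and replace $R$ by a dvr of $K'$ dominating it (the residue field is unchanged since $k=\bar k$).
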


A smooth projective variety over an algebraically closed field has trivial unramified cohomology in positive degree: one sees this by acting with the diagonal on the respective unramified cohomology group and using the decomposition \eqref{eq:diagonal-decomposition}, see \cite[Theorem 7.4]{Sch-survey}.  
Condition (O) therefore implies that no resolution $Y'$ of $Y$ admits a decomposition of the diagonal.  
The vanishing condition (V) replaces the universal $\CH_0$-triviality requirement for a resolution.
The key point is that condition (V) is typically automatic for the unramified cohomology classes constructed via \cite{AM,CTO,pirutka,HPT,Sch-duke,Sch-JAMS}, see \cite[Theorem 9.2]{Sch-JAMS}.  
In practice, this often eliminates the need to explicitly compute a resolution or alteration of $Y$ (see e.g. \cite[Proposition 5.1]{Sch-JAMS}), which makes the method more flexible, especially in higher dimensions.

We then have the following more precise version of Theorem \ref{thm:JAMS-intro}; see \cite{Sch-ANT} for extensions to ${\rm char}(k)=2$.

\begin{theorem}[\cite{Sch-JAMS}] \label{thm:JAMS-body} 
Let $N\geq 3$ be an integer and write $N=n+r$ with positive integers $n$ and $r$ such that $ r \le 2^n-2$.  
Then a very general hypersurface $X\subset \CP^{N+1}_k$ of degree $d\geq n+2$ over a field of characteristic different from $2$ does not admit a decomposition of the diagonal.
%dimension $N\geq 3$ and degree $d\geq (\log_2 N) +2$ over a field of characteristic different from $2$ does not admit a decomposition of the diagonal.
\end{theorem}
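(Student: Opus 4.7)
The plan is to apply Theorem \ref{thm:degeneration-O+V}. Given the very general hypersurface $X\subset \CP^{N+1}_k$, I want to degenerate it to a carefully chosen irreducible hypersurface $Y\subset \CP^{N+1}_k$ over an algebraically closed residue field, and to produce on $Y$ a nontrivial unramified cohomology class that moreover vanishes over the singular locus in the sense required by (V). Since decomposability of the diagonal specializes in flat families, this will rule out a decomposition of the diagonal on $X$.

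The argument proceeds by induction on $n$; the bound $r\leq 2^n-2$ is exactly what an inductive step which roughly doubles the allowed ``width'' $r$ when $n$ is incremented by one can sustain. The base case $n=1$ gives $r=0$ and $N=1$, so $X$ is a very general plane curve of degree $d\geq 3$. Such a curve has positive genus, hence $\CH_0(X_{k(X)})_0\neq 0$, and no decomposition of the diagonal exists by Proposition \ref{prop:diagonal}. For the inductive step with $n\geq 2$, I would specialize to a hypersurface of the form
\[
Y=\{x_0^2\, g+2x_0\, h+f=0\}\subset \CP^{N+1}_k,
\]
with sufficiently generic $g,h,f\in k[x_1,\dots,x_{N+1}]$ of degrees $d-2,\,d-1,\,d$; since $\operatorname{char} k\neq 2$, completing the square identifies $Y$ birationally with the double cover $W\to \CP^{N}_k$ branched along the discriminant $D=\{h^2-fg=0\}$.

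The unramified class on $k(Y)=k(W)$ is then built in the style of \cite{Sch-duke}: a Milnor-type symbol whose last entry records the double cover (a square class witnessing $y^2=h^2-fg$), and whose remaining entries are pulled back from a nontrivial class produced by the inductive hypothesis on a lower-dimensional hypersurface stratum, applied with parameters $(n-1,r')$ for an appropriate $r'\leq 2^{n-1}-2$. Nontriviality is checked by restricting to a generic fiber of an auxiliary linear projection and reducing to nontriviality of the inductive class; the degree assumption $d\geq n+2$ supplies precisely enough coefficients in $(g,h,f)$ so that $D$ factors in a way matching the inductive stratum. Condition (V) will then follow automatically from \cite[Theorem~9.2]{Sch-JAMS}, because the symbol entries restrict to squares at every scheme point lying above $Y^{\sing}$.

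The main obstacle is organizing the symbol so that its residues along all prime divisors on a regular alteration $Y'\to Y$ genuinely vanish while its restriction to the inductive stratum remains nontrivial. This is exactly the refinement of the Colliot-Th\'el\`ene--Pirutka method achieved in \cite{Sch-duke}: it decouples the construction of the class from an explicit resolution of $Y$ and replaces universal $\CH_0$-triviality of the resolution by the much more flexible condition (V), which is what makes the inductive doubling of $r$ feasible in the first place. Once such a symbol is in hand, both (O) and (V) of Theorem \ref{thm:degeneration-O+V} are verified, and the induction closes.
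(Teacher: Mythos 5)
Your framework is the right one (degenerate and apply Theorem \ref{thm:degeneration-O+V}), but the heart of the proof is the construction of the special fiber $Y$ and of the class $\alpha$, and here your proposal both diverges from the paper and does not work as described. The paper does \emph{not} argue by induction on $n$: for each $(n,r,d)$ it writes down a single explicit hypersurface $Y$ (equation \eqref{eq:F-JAMS}) that has multiplicity $d-2$ along the $r$-plane $P=\{x_0=\dots=x_n=0\}$, so that projection from $P$ turns $Bl_PY$ into a quadric bundle of \emph{fiber dimension $r$} over $\CP^n$, with generic fiber the quadric of the diagonal form $q=\langle \tilde g, c_1,\dots,c_r,(-1)^nx_1\cdots x_n\rangle$ whose coefficients $c_j$ are the $2^n$ diagonal entries of the Pfister form $\langle\langle x_1,\dots,x_n\rangle\rangle$. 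The class is $f^\ast(x_1,\dots,x_n)\in H^n(k(Y),\mu_2^{\otimes n})$, and the bound $r\le 2^n-2$ is simply the count of available Pfister coefficients ($q$ has $r+2$ entries and must fit inside the $2^n$-dimensional Pfister form), not the outcome of an inductive ``doubling of the width.'' (An induction of the flavor you describe does appear in this survey, but only in Section \ref{sec:motivic} for the Lange--Schreieder improvement, and it runs on $N$ with $d$ fixed, using Theorem \ref{thm:JAMS-body} itself as the base case.)

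Concretely, your degeneration $Y=\{x_0^2g+2x_0h+f=0\}$ is birational to a double cover of $\CP^N$, i.e.\ a quadric bundle of fiber dimension zero over the \emph{full} base $\CP^N$. The quadratic-form machinery that makes symbols unramified and nontrivial (the kernel of restriction to the function field of a quadric, Pfister neighbors) requires the fiber quadric to be comparable in size to an $n$-fold Pfister quadric; a binary form over $k(\CP^N)$ cannot control a degree-$n$ symbol for $n\ge 2$, and a degree-$N$ symbol is far too large to be unramified on such a cover. The dimension bookkeeping must instead be $N=n+r$ with the symbol of degree $n\approx\log_2 N$ living over a small base $\CP^n$. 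Your appeal to \cite[Theorem 9.2]{Sch-JAMS} for condition (V) also presupposes exactly this quadric-bundle structure. Finally, your base case $n=1$ forces $r=0$, contradicting the hypothesis that $r$ is positive (and $N\ge 3$); the genus argument for plane curves is not part of this statement. So the proposal has a genuine gap: it is missing the key idea of degenerating to a hypersurface singular along an $r$-plane so as to produce a quadric bundle tied to a Pfister form.
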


In order to prove the above theorem we will apply Theorem \ref{thm:degeneration-O+V}.
To this end, we need, for each $d\ge n+2$, a hypersurface $Y\subset \CP^{N+1}_k$ of degree $d$ satisfying conditions (O) and (V) from Theorem \ref{thm:degeneration-O+V}.  
We describe the construction of $Y$, see \cite{Sch-JAMS} and \cite[Section 6]{Lange-Sch}. 
For simplicity, we focus on the extremal case $d=n+2$.  

Let $x_0,\dots,x_n,y_1,\dots,y_{r+1}$ be the coordinates of $\CP^{N+1}$ and let $t \in k$ be transcendental over its prime field.  
Consider the homogeneous polynomial
\begin{equation}\label{eq:gfromSch-torsion}
g(x_0,\dots,x_n) \coloneq t \left(\sum_{i=0}^n x_i^{\lceil (n+1)/2\rceil}\right)^2 - (-1)^n x_0^{2\lceil(n+1)/2\rceil - n} x_1 \cdots x_n \in k[x_0,\dots,x_n]
\end{equation}
of degree $\deg g = 2 \lceil (n+1)/2\rceil \le n+2$, and define
\begin{equation} \label{eq:F-JAMS}
F \coloneqq g(x_0,\dots,x_n) x_0^{2+n-\deg g} + \sum_{j=1}^{r} x_0^{n-\deg c_j} c_j(x_1,\dots,x_n) y_j^2 + (-1)^n x_1 \cdots x_n y_{r+1}^2,
\end{equation}
where 
\begin{equation}\label{eq:FermatPfister}
c_j(x_1,\dots,x_n) \coloneq (-x_1)^{\varepsilon_1} \cdots (-x_n)^{\varepsilon_n}, \quad j = \sum_i \varepsilon_i 2^{i-1}, \; \varepsilon_i \in \{0,1\}.
\end{equation}
The associated hypersurface is
\begin{equation} \label{eq:Z-Sch-torsion}
Y \coloneqq \{F = 0\} \subset \CP^{N+1}_k.
\end{equation}

Note that $Y$ has multiplicity $n=d-2$ along the $r$-plane $P=\{x_0=\dots=x_n=0\}$.  
Projection from $P$ gives a morphism $f\colon Bl_P Y \to \CP^n$, whose generic fiber $Q \subset \CP^{r+1}_{k(\CP^n)}$ is a quadric defined by the diagonal quadratic form
\[
q \coloneq \langle \tilde g, c_1, \dots, c_r, (-1)^n x_1 \cdots x_n \rangle
\]
over $k(x_1,\dots,x_n)$, where $\tilde g = g(1,x_1,\dots,x_n)$.  
%Note that the generic fibre is closely related to the Pfister quadric  $\langle\langle x_1,\dots,x_n \rangle\rangle = \sum_{j=0}^{2^n-1} c_j y_j^2$.
This fibration into quadrics, and its close relation to the Pfister quadric  $\langle\langle x_1,\dots,x_n \rangle\rangle = \sum_{j=0}^{2^n-1} c_j y_j^2$, plays a major role in the argument.
The next theorem summarizes the key properties from \cite[Propositions 5.1, 6.1, 7.1]{Sch-JAMS}; for more details and some motivation how to find the above equations, see e.g.\ the survey \cite[Section 9]{Sch-survey}.

\begin{theorem} \label{thm:JAMS-unramified-class}
The class $f^\ast \alpha \in H^n(k(Y),\mu_2^{\otimes n})$ is unramified and nontrivial.  
Moreover, for any generically finite proper morphism $\tau\colon Y'\to Y$ and for any scheme-point $x\in Y'$ in the smooth locus, we have $(f^\ast \alpha)|_x = 0 \in H^n(\kappa(x),\mu_2^{\otimes n})$ if $\tau(x) \in P$ or if $\tau(x)\notin P$ and $f(\tau(x))$ is not the generic point of $\CP^n$.
\end{theorem}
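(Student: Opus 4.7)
The plan is to verify the three assertions separately by combining residue calculus for the Galois-cohomology symbol $\alpha=(x_1,\dots,x_n)$ with the quadric-bundle structure of $f\colon Bl_PY\to\CP^n$ and Hensel's lemma applied to $\tilde g$ modulo the coordinate hyperplanes $H_i=\{x_i=0\}\subset\CP^n$. For unramifiedness, I would first note that $\alpha$ is already unramified on $\CP^n$ away from $\bigcup_i H_i$, so only prime divisors $D\subset Bl_PY$ whose image under $f$ sits inside some $H_i$ can carry a nontrivial residue of $f^*\alpha$. For $D$ dominating some $H_i$, I would pass to the henselization at the generic point of $H_i$; there $\tilde g\equiv t\bigl(\sum_{j\ne i}x_j^{\lceil(n+1)/2\rceil}\bigr)^2\pmod{x_i}$ becomes $t$ times a square, and together with the Pfister-like arrangement of the remaining entries $c_1,\dots,c_r,(-1)^n x_1\cdots x_n$ this forces the residue of $f^*\alpha$ along $D$ to vanish via the classical fact that $\alpha$ dies on the function field of the Pfister quadric $\langle\langle x_1,\dots,x_n\rangle\rangle$. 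Divisors mapping to strictly lower-dimensional strata of $\bigcup_i H_i$ or into the exceptional locus over $P$ would be handled by iterating the same Pfister-form analysis after restricting to the generic point of a codimension-$c$ intersection $H_{i_1}\cap\cdots\cap H_{i_c}$, where the sub-Pfister structure persists in the remaining variables.

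For nontriviality, I would specialize the transcendental parameter $t$ to $0$ and call the resulting base field $k_0$. A direct computation using $\deg g=2\lceil(n+1)/2\rceil$ gives
\[
F|_{t=0}\;=\;(-1)^n x_1\cdots x_n\bigl(y_{r+1}^2-x_0^2\bigr)\;+\;\sum_{j=1}^{r} x_0^{n-\deg c_j}c_j\,y_j^2,
\]
so the assignment $y_{r+1}=x_0$, $y_1=\cdots=y_r=0$ defines a rational section $\sigma\colon\CP^n\dashrightarrow Bl_PY_0$ of the specialized fibration. Since $f^*\alpha$ is unramified by the previous step, it specializes to an unramified class on $Y_0$; pulling back along $\sigma$ then returns $\alpha\in H^n(k_0(\CP^n),\mu_2^{\otimes n})$, which is nontrivial by the iterated residue computation along $H_n,H_{n-1},\dots,H_1$, ultimately producing the generator $1\in H^0(k_0,\Z/2)$.

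For the vanishing claim at a smooth scheme point $x\in Y'$, the idea is that unramifiedness of $f^*\alpha$ makes the restriction to $\kappa(x)$ well defined, and one has to check this restriction vanishes in the two stated situations. If $\tau(x)\notin P$ but $f(\tau(x))$ is not the generic point of $\CP^n$, then $f(\tau(x))\in H_i$ for some $i$, and writing $f^*\alpha$ in a local trivialization near $x$ forces a symbol entry to become zero or a square in $\kappa(x)$, hence $(f^*\alpha)|_x=0$. The case $\tau(x)\in P$ is analogous after passing to a blow-up chart for $Bl_PY\to Y$ and inspecting how the coordinates descend: all the $x_i$ collapse into a common uniformizer, again forcing a repeated (and hence a zero) entry in the symbol. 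The hardest part of the plan is Step~1: controlling the residues of $f^*\alpha$ at exceptional divisors of $Bl_PY$ over $P$ and at divisors lying over deep intersections of coordinate hyperplanes requires careful bookkeeping of how the factor $x_0^{2\lceil(n+1)/2\rceil-n}$ in \eqref{eq:gfromSch-torsion} and the factors $x_0^{n-\deg c_j}$ in \eqref{eq:F-JAMS} contribute to the valuations. It is precisely the design of $F$ that makes every iterated residue of $q$ collapse to a sub-Pfister situation, and checking this uniformly across all divisorial valuations is the technical heart of the argument.
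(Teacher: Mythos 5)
Your overall strategy matches the one in \cite{Sch-JAMS} (the survey itself only cites \cite[Props.~5.1, 6.1, 7.1]{Sch-JAMS} and does not reprove the theorem): unramifiedness via residues, Hensel's lemma at the generic points of the coordinate hyperplanes, and the vanishing of $\alpha$ on Pfister quadrics; nontriviality via the degeneration $t\to 0$. Your computation of $F|_{t=0}$ and of the resulting rational section $y_{r+1}=x_0$, $y_1=\cdots=y_r=0$ is correct, and the reduction to the iterated residue computation for $\alpha$ on $\CP^n$ is the intended argument. (Since $k$ is algebraically closed in the relevant application, ``$t$ times a square'' is a square, so Hensel does apply as you claim; note also that the hyperplane $\{x_0=0\}$ must be included among the $H_i$, because in affine coordinates the symbol is $(x_1/x_0,\dots,x_n/x_0)$.)

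There are, however, genuine gaps in the ``Moreover'' part and in what you yourself call the technical heart. First, the inference that $f(\tau(x))$ not being the generic point forces $f(\tau(x))\in H_i$ is false: a point of positive codimension in $\CP^n$ need not lie on any coordinate hyperplane. That missing case is easy -- for such $s=f(\tau(x))$ all entries of the symbol are units at $s$, so $(f^\ast\alpha)|_x$ is pulled back from $\alpha|_s\in H^n(\kappa(s),\mu_2^{\otimes n})$, which vanishes because $\kappa(s)$ has $2$-cohomological dimension at most $n-1$ over $k=\bar k$ -- but it must be separated from the hard case $s\in H_i$. Second, the step ``a symbol entry becomes zero, hence $(f^\ast\alpha)|_x=0$'' is not a valid argument: if an entry specializes to $0$, the symbol is simply undefined at $x$, and the restriction of the unramified class must be computed from its extension over $\mathcal O_{Y',x}$ (Bloch--Ogus/Gersten), not by substitution. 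The actual mechanism, both for these restrictions and for the residues along divisors lying over deep strata of $\bigcup_i H_i$ or over $P$, is the quadric-bundle vanishing theorem of \cite{Sch-duke,Sch-JAMS} (cf.\ \cite[Theorem 9.2]{Sch-JAMS}): one shows that $\alpha$ already dies over $\operatorname{Frac}(\mathcal O^{h}_{\CP^n,s})(Q)$ for every relevant point $s$, because modulo the maximal ideal at $s$ enough entries of $q$ become squares or the form becomes isotropic; this simultaneously kills all residues at points of $Y'$ over $s$ and all restrictions to their residue fields. You invoke the single-hyperplane henselization correctly, but the uniform statement over all scheme points $s$ (including those in $P$, where the blow-up geometry enters) is exactly what your plan leaves open, and it does not follow by ``iterating'' the codimension-one case.
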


Note the generic fiber of $f$ is smooth.
Therefore, $\tau(x) \in Y^{\sing}$ implies either $\tau(x) \in P$, or $\tau(x)\notin P$ and $f(\tau(x))$ is not the generic point of $\CP^n$.  
Theorem \ref{thm:JAMS-body} therefore follows from Theorems \ref{thm:degeneration-O+V} and \ref{thm:JAMS-unramified-class}.

\section{Motivic methods and degeneration to unions} \label{sec:motivic}

In \cite{NS,KT}, Nicaise--Shinder and Kontsevich--Tschinkel 
use the weak factorization theorem in characteristic zero to study a motivic birational invariant, namely the motivic volume, that is associated to a semi-stable degeneration of varieties in characteristic zero, see also \cite{NO2}. 
They use this to prove that (stable) rationality is a closed property in smooth projective families.
Another direct consequence of their work is the following obstruction for (stable) rationality in characteristic zero:

 \begin{theorem}[\cite{NS,KT}] \label{thm:NS-KT}
 Let $R$ be a discrete valuation ring with algebraically closed residue field $k$ of characteristic zero.
 Let $\pi\colon \mathcal X\to \Spec R$ be a flat proper $R$-scheme such that $\mathcal X$ is regular and the special fiber $Y=\mathcal X\times_R k$ is a simple normal crossing divisor on $\mathcal X$. 
 Let $X$ denote the geometric generic fiber of $\pi$.
Let $Y_i$ with $i\in I$ denote the irreducible components of $Y$.
%and let $Y_J:=\bigcap_{j\in J} Y_j$ for $J\subset I$ denote the corresponding strata.
Assume that the following holds in the free abelian group on (stably) birational equivalence classes of smooth $k$-varieties 
\begin{align} \label{eq:thm:NS-KT}
\sum_{\emptyset\neq J\subset I} (-1)^{|J|-1} [Y_J\times \CP_k^{|J|-1}] \neq [\CP_k^{\dim X}] \quad \quad \text{where}\quad \quad Y_J\coloneq \bigcap_{j\in J} Y_j.
\end{align} 
Then $X$ is not (stably) rational. 
 \end{theorem}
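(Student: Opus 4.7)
My approach follows Nicaise--Shinder and Kontsevich--Tschinkel: construct a motivic volume invariant on the geometric generic fiber and evaluate it on $X$ versus projective space. Concretely, I would build a group homomorphism
\[
\operatorname{Vol}\colon \Z[\operatorname{SB}_{\bar K}] \longrightarrow \Z[\operatorname{SB}_k]
\]
(respectively $\Z[\operatorname{B}_{\bar K}]\to \Z[\operatorname{B}_k]$ in the rationality case, where $\operatorname{SB}$ and $\operatorname{B}$ denote stable birational and birational equivalence classes of smooth projective varieties, respectively), such that whenever $X$ admits a strictly semi-stable $R$-model $\mathcal{X}$ with special fiber $Y=\bigcup_{i\in I} Y_i$,
\[
\operatorname{Vol}([X])=\sum_{\emptyset\neq J\subset I}(-1)^{|J|-1}[Y_J\times \CP^{|J|-1}_k].
\]
Granting such a $\operatorname{Vol}$, the theorem is immediate by contraposition: if $X$ were (stably) rational, then $\operatorname{Vol}([X])=\operatorname{Vol}([\CP^{\dim X}_{\bar K}])=[\CP^{\dim X}_k]$, contradicting \eqref{eq:thm:NS-KT}.

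The main technical obstacle, and where the bulk of the work lies, is well-definedness: two different semi-stable models of the same $X$ must produce the same element of $\Z[\operatorname{SB}_k]$. For this I would invoke the weak factorization theorem in characteristic zero of Abramovich--Karu--Matsuki--W{\l}odarczyk, together with its logarithmic strengthening for semi-stable pairs due to Abramovich--Temkin--W{\l}odarczyk, which connects any two such models by a finite sequence of blow-ups along smooth strata of the boundary divisor. Blowing up a stratum $Y_K$ introduces a new component that is a projective bundle over $Y_K$ and alters the incidence pattern of the remaining $Y_i$ in a combinatorially controlled way. The local check is an inclusion--exclusion identity on the simplicial complex of strata showing that the resulting change involves only projective bundles over existing strata. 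Since any projective bundle $\CP(E)\to Z$ on a smooth variety is birational to $Z\times \CP^r$ (a coherent sheaf being generically free), these extra terms contribute the same class in $\Z[\operatorname{B}_k]$ and a fortiori in $\Z[\operatorname{SB}_k]$ as the corresponding trivial bundle, and the formal computation shows that they cancel.

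The remaining steps are routine. Applying the formula to $\CP^{\dim X}_{\bar K}$ with the trivial model $\CP^{\dim X}_R$, whose special fiber is the single smooth component $\CP^{\dim X}_k$, yields $\operatorname{Vol}([\CP^{\dim X}_{\bar K}])=[\CP^{\dim X}_k]$. To see that $\operatorname{Vol}$ factors through stable birational equivalence of the generic fiber, observe that for any $X$ the product $X\times \CP^a$ admits the semi-stable model $\mathcal{X}\times_R \CP^a_R$, whose contribution to the formula replaces each $Y_J\times \CP^{|J|-1}$ by $Y_J\times \CP^a\times \CP^{|J|-1}$; these agree with the original terms in $\Z[\operatorname{SB}_k]$, so $\operatorname{Vol}([X\times \CP^a])=\operatorname{Vol}([X])$. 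Combined with the well-definedness on birational classes of the generic fiber established in step two, this gives descent to $\Z[\operatorname{SB}_{\bar K}]$ and completes the argument. The rationality version proceeds identically in $\Z[\operatorname{B}_k]$, using only well-definedness on birational classes.
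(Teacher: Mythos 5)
Your proposal is correct and follows essentially the same route as the paper, which states this theorem as a direct consequence of the motivic volume of Nicaise--Shinder and Kontsevich--Tschinkel: one constructs the specialization homomorphism on free abelian groups of (stable) birational types via the alternating-sum formula for a strictly semi-stable model, proves independence of the model by weak factorization, and concludes by contraposition. The paper itself gives no further details beyond citing \cite{NS,KT}, and your sketch accurately reproduces the argument of those references.
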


Recently, Hotchkiss and Stapleton upgraded the motivic volume of Nicaise--Shinder and Kontsevich--Tschinkel (i.e.\ 
the left hand side in \eqref{eq:thm:NS-KT}) to an invariant that is sensitive to the $R$-equivalence class of varieties and hence to retract rationality.
This leads in particular to a version of the above theorem, where the free abelian group of (stable) birational equivalence classes is replaced by the free abelian group on $R$-equivalence classes, see \cite[Corollary 1.5]{hotchkiss-stapleton}.
As in \cite{NS,KT}, their method uses the weak factorization theorem and hence resolution of singularities, which is the reason why the results are currently restricted to the case of characteristic zero.

In \cite{NO}, Nicaise and Ottem used Theorem \ref{thm:NS-KT} to prove the following, which was the first result on stable rationality beyond the logarithmic bound in Theorem \ref{thm:JAMS-body}.

\begin{theorem}[\cite{NO}] \label{thm:NO}
Let $X\subset \CP^6_\C$ be a very general quartic fivefold.
Then $X$ is not stably rational.
\end{theorem}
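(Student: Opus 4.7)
The plan is to apply Theorem \ref{thm:NS-KT} to a carefully chosen semi-stable degeneration of the very general quartic fivefold $X \subset \CP^6_\C$. The strategy is to construct a one-parameter degeneration to a reducible special fiber whose strata, after semi-stable reduction, can be controlled in the free abelian group on stable birational equivalence classes, and to show that the resulting alternating sum differs from $[\CP^5]$.

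First, I would construct a flat proper family $\pi\colon \mathcal X \to \Spec R$ over a DVR $R$ with algebraically closed residue field $\C$ whose generic fiber is a very general quartic fivefold and whose special fiber is reducible. A crude degeneration such as $F_0 = L \cdot G_3$ (hyperplane $\cup$ cubic) would leave a cubic fourfold intersection whose stable birational type is unknown, so instead---following the tropical strategy of \cite{NO}---I would choose a regular subdivision of the Newton polytope of the quartic, yielding a flat family whose special fiber is the union of toric hypersurfaces associated to the cells of the subdivision. The subdivision is to be designed so that after semi-stable reduction, the SNC special fiber $Y' = \bigcup_{i \in I} Y_i'$ contains a distinguished stratum birational to a very general quartic double solid $\Sigma \to \CP^3$---which is not stably rational by Voisin's theorem \cite{voisin}---while the remaining strata are all stably rational.

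Next, I would perform semi-stable reduction by iteratively blowing up the non-SNC loci in the total space $\mathcal X$. The strata $Y_J' = \bigcap_{j \in J} Y_j'$ must be identified up to stable birational equivalence; most of them will be (blowups of) smooth projective toric varieties or $\CP^r$-bundles over such, hence stably rational, while the distinguished stratum contributes the class of $\Sigma$.

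Finally, I would evaluate the motivic obstruction
\[
\sum_{\emptyset \neq J \subset I}(-1)^{|J|-1}[Y_J' \times \CP^{|J|-1}]
\]
in the free abelian group on stable birational classes. By the above analysis it should reduce to an expression of the form $m \cdot [\CP^5] + c \cdot [\Sigma]$ with $c \neq 0$. Since $[\Sigma] \neq [\CP^3]$ in this group by \cite{voisin}, the class $[\Sigma]$ is a nonzero generator independent of $[\CP^5]$, so the sum cannot equal $[\CP^5]$; Theorem \ref{thm:NS-KT} then rules out stable rationality of $X$. The main obstacle is the first step: the tropical bookkeeping needed to arrange exactly one non-stably-rational stratum while all others collapse to projective spaces modulo stable birational equivalence is the technical heart of \cite{NO} and requires a delicate combinatorial choice of the defining subdivision.
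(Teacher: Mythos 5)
Your high-level framework is the right one (degenerate, apply Theorem \ref{thm:NS-KT}, use specialization of stable rationality to pass from one non-stably-rational smooth quartic fivefold to the very general one), but the proposal has two concrete gaps. First, the entire content of the proof is deferred: the claim that the alternating sum reduces to $m\cdot[\CP^5]+c\cdot[\Sigma\times\CP^r]$ with $c\neq 0$ is asserted as a hope (``it should reduce to''), with no construction of the subdivision and no control of the strata. Second, your choice of non-stably-rational input does not fit the geometry. A quartic double solid is a \emph{threefold}, so in an SNC degeneration of a fivefold it could only appear as a triple (or deeper) stratum, and no degeneration of quartic fivefolds producing such a stratum is known; the actual argument of Nicaise--Ottem instead arranges the \emph{fourfold} double locus $Z=Y_0\cap Y_1$ to be (birational to) the Hassett--Pirutka--Tschinkel quadric surface bundle over $\CP^2$ from \cite{HPT}, which is the non-stably-rational input.

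More importantly, your requirement that \emph{all} other strata be stably rational is both harder to arrange and unnecessary. In the degeneration used in \cite{NO} (and sketched in the paper), the special fiber has just two components $Y_0\cong Y_1$, so the obstruction reads $2[Y_0]-[Z\times\CP^1]$; reducing modulo $2$ kills the component terms regardless of their stable birational type, and one only needs $[Z\times\CP^1]\neq[\CP^5]$ in the free abelian group, which holds precisely because $Z$ is not stably rational. This mod-$2$ cancellation is the key trick that lets one avoid the delicate bookkeeping you identify as the main obstacle; without it (or the explicit toric/tropical analysis of \cite{NO,NO2}), your outline does not close.
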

\begin{proof}[Sketch of proof]
Since stable rationality specializes in smooth projective families in characteristic zero by \cite{NS},  it suffices to exhibit one example of a smooth projective quartic fivefold over an algebraically closed field $k$ of characteristic zero that is not stably rational.
The idea of Nicaise and Ottem is to find a semi-stable degeneration $\mathcal X\to \Spec R$ over $R=\C[[t]]$, whose generic fiber is a quartic hypersurface and whose special fiber $Y=Y_0\cup Y_1$ decomposes into two isomorphic components $Y_0\cong Y_1$, such that the intersection $Z\coloneqq Y_0\cap Y_1$ is not stably rational by \cite{HPT}.
Explicit equations for $\mathcal X$ are given in \cite[Example 4.3.2]{NO2}; a more conceptual approach is taken in \cite{NO}.
In the free abelian group on stable birational equivalence classes, we then have
$$
[Y_0]+[Y_1]-[Z\times \CP^1]=2[Y_0]-[Z\times \CP^1]\neq [\CP^5_k] ,
$$
as can be seen after reduction modulo $2$.
The result thus follows from Theorem \ref{thm:NS-KT}.
\end{proof}

This method was taken further in the thesis of Moe (advised by Nicaise and Ottem), where he uses methods from discrete geometry to study rather involved degenerations of hypersurfaces into many components.
As a result, Moe shows that very general hypersurfaces $X\subset \CP_{\C}^{N+1}$ of degree $d\geq 4$ and dimension $N\leq (d+1)2^{d-4}$ are not stably rational. 
In joint work with Lange \cite{Lange-Sch}, which builds on previous joint work with Pavic \cite{Pavic-Sch}, this has been upgraded to the following cycle-theoretic result which implies Theorem \ref{thm:Lange-Sch-intro} from the introduction. 

\begin{theorem}[\cite{Lange-Sch}]\label{thm:Lange-Sch-main}
A very general hypersurface $X\subset \CP^{N+1}_k$ of degree $d\geq 4$ and dimension $N \leq (d+1)2^{d-4}$ over a field of characteristic different from $2$ does not admit a decomposition of the diagonal. 
\end{theorem}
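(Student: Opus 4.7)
The plan is to deduce Theorem \ref{thm:Lange-Sch-main} from Theorem \ref{thm:JAMS-body} (applied through the explicit hypersurface of the form \eqref{eq:Z-Sch-torsion}) by degenerating the very general degree-$d$ hypersurface to a reducible central fibre, one of whose strata carries the unramified class from Theorem \ref{thm:JAMS-unramified-class}, and then running a cycle-theoretic analogue of the motivic volume of \cite{NS,KT}. By the specialisation principle recalled in Section \ref{sec:rationality}, it suffices to exhibit, for each pair $(d,N)$ with $d\geq 4$ and $N\leq (d+1)2^{d-4}$, a single projective degeneration whose geometric generic fibre admits no decomposition of the diagonal.

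First I would construct a flat projective family $\pi\colon \mathcal X\to \Spec R$ over a DVR $R$ with algebraically closed residue field, whose generic fibre is a smooth hypersurface of degree $d$ and dimension $N$ and whose central fibre $Y=\bigcup_{i\in I}Y_i$ is a simple normal crossing union of smooth hypersurfaces. Following the combinatorial/discrete-geometric degenerations used by Moe and the scheme of \cite{NO,Lange-Sch}, the pencil should be arranged so that some distinguished stratum $Y_J=\bigcap_{j\in J}Y_j$ (possibly after birational modification) is isomorphic to a hypersurface of the form \eqref{eq:Z-Sch-torsion} of the appropriate degree and dimension, and hence carries the nontrivial unramified class $f^\ast\alpha\in H^n(k(Y_J),\mu_2^{\otimes n})$ provided by Theorem \ref{thm:JAMS-unramified-class}. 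The numerical bound $N\leq (d+1)2^{d-4}$ should emerge as the largest dimension in which such an arrangement can be realised inside $\CP^{N+1}$: the factor $d+1$ reflects the number of components available in a degenerating degree-$d$ pencil, while the factor $2^{d-4}$ is the dimension budget for the Pfister-type quadric fibration on the distinguished stratum coming from Theorem \ref{thm:JAMS-body}.

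Next I would adapt the cycle-theoretic method of \cite{Pavic-Sch} to this reducible situation. Assuming that the geometric generic fibre admits a decomposition of the diagonal as in \eqref{eq:diagonal-decomposition}, applying Fulton's specialisation map to $\Delta_X$ yields a cycle identity on $Y\times Y$ that, via inclusion-exclusion over strata, decomposes along the pieces $Y_J\times Y_{J'}$. Restricting this identity to the generic point of the distinguished stratum and using the action of correspondences on unramified cohomology reduces the question to showing that $f^\ast\alpha$ is killed by every contribution coming from other strata. By the vanishing statement built into Theorem \ref{thm:JAMS-unramified-class}, these contributions die for exactly the same geometric reason as in the proof of Theorem \ref{thm:JAMS-body}: they factor either through the $r$-plane $P\subset Y_J$ or through divisors whose image in the base $\CP^n$ is not its generic point. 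The remaining, distinguished contribution produces $f^\ast\alpha$ itself, which is nonzero, giving the desired contradiction.

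The hard part will be making the specialisation bookkeeping honest in the reducible, multi-stratum setting: one has to ensure that $\mathcal X$ is regular (or at least admits a $\CH_0$-trivial alteration), that each relevant intersection $Y_J$ is smooth or has a $\CH_0$-trivial resolution so that the cohomological vanishing criterion underlying Theorem \ref{thm:degeneration-O+V} can be applied componentwise, and, most delicately, that the nontrivial unramified class on the distinguished stratum cannot be cancelled by stratum-to-stratum contributions in the inclusion-exclusion. This last cancellation problem, controlled by the combinatorics of the dual complex of $Y$ together with the Pfister-quadric structure on $Y_J$, is what pins down the precise bound $N\leq (d+1)2^{d-4}$ and constitutes the main technical heart of the argument.
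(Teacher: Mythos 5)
There is a genuine gap: your architecture is essentially the motivic-volume/Moe architecture (one large SNC degeneration, inclusion--exclusion over the strata $Y_J$, a distinguished deep stratum carrying the unramified class), but that architecture does not obstruct decompositions of the diagonal. The alternating sum over strata is justified by the Nicaise--Shinder/Kontsevich--Tschinkel motivic volume, which only controls (stable) birational type and relies on weak factorization in characteristic zero; there is no cycle-theoretic analogue that lets you specialize $\Delta_X$, ``restrict to the generic point of a distinguished stratum'', and cancel the stratum-to-stratum contributions. The vanishing clause of Theorem \ref{thm:JAMS-unramified-class} is tailored to condition (V) of Theorem \ref{thm:degeneration-O+V} for a single irreducible hypersurface; it says nothing about contributions from other components of a reducible special fiber, so the ``cancellation problem'' you defer to at the end is not a technicality but the entire content of the theorem, and your proposal supplies no mechanism for it.

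The paper's actual route is quite different and you should note the two ideas it rests on. First, the argument is an induction on $N$ with $d$ fixed, raising the dimension by one at each step via the double cone construction of Example \ref{ex:double-cone}; the bound $N\leq(d+1)2^{d-4}$ comes from counting how many times this step can be iterated (roughly $\lfloor (d-2)/4\rfloor$ times for each of the $\approx 2^{d-2}$ variables $y_j$ in \eqref{eq:F-JAMS}), not from fitting a single large arrangement into $\CP^{N+1}$. Second, the obstruction at each step is Theorem \ref{thm:Obstruction-Lange-Sch}: one passes to a \emph{non-proper} strictly semi-stable model $\mathcal X^\circ$ whose special fiber has exactly \emph{two} components, each an open subset of affine space, so that $\CH_1$ of the components vanishes and the surjectivity of $\Psi_{Y^\circ_L}$ forces $\CH_0(Z^\circ_L)\otimes\Lambda=0$ whenever $X^\circ$ admits a $\Lambda$-decomposition of the diagonal. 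The contradiction is then that $Z^\circ$ --- of dimension one less --- already fails to admit a $\Z/2$-decomposition by the previous step (with Theorem \ref{thm:JAMS-body} as the base case). Without this two-component, affine, non-proper reduction, or some substitute for it, your plan cannot be completed.
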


As in \cite{NO,moe}, the strategy is to degenerate $X$ to a union of varieties.
In \cite{moe}, this union is combinatorially complicated, may contain many components, and the total space has toroidal singularities.
In \cite{Lange-Sch}, instead, the crucial idea is to work with non-proper but semi-stable degenerations, thus removing all singularities and all but two components of the special fiber, whose intersection is ``sufficiently irrational''. 

To formulate the obstruction, we say that a $k$-variety $X$ admits a $\Lambda$-decomposition of the diagonal for some ring $\Lambda$ if there is a zero-cycle $z\in \CH_0(X)$ with $\delta_X=z_{k(X)}\in \CH_0(X_{k(X)})\otimes_\Z \Lambda$, cf.\ \eqref{eq:diagonal-decomposition-zero-cycle}.
If $X$ admits a $\Lambda$-decomposition of the diagonal, then so does any open subset of $X$, cf.\ \cite[Proposition 1.8]{fulton}.
Moreover, by Fulton's specialization map on Chow groups, if a variety $X$ admits a $\Lambda$-decomposition of the diagonal then so does any specialization $Y$ of $X$, cf.\ \cite[\S 20.3]{fulton} and \cite[Lemma 3.8]{Lange-Sch}. 
Let us finally recall that the exponential characteristic $e$ of a field $k$ is $1$ if ${\rm char}(k)=0$ and $e={\rm char}(k)$ otherwise. 

\begin{theorem}[\cite{Lange-Sch}]\label{thm:Obstruction-Lange-Sch}
 Let $R$ be a discrete valuation ring with algebraically closed residue field $k$ and fraction field $K$. 
Let $\Lambda$ be a torsion ring in which the exponential characteristic of $k$ is invertible.
 Let $\mathcal{X} \to \Spec R$ be a flat separated (not necessarily proper) $R$-scheme of finite type with smooth irreducible geometric generic fiber $X=\mathcal X\times_R\overline K$ and special fiber $Y=\mathcal X\times_Rk$. 
    Assume that 
    \begin{enumerate}[label={(\arabic*)}]
        \item %$\mathcal{X}$ is regular and the special fiber 
        $Y$ is a simple normal crossing divisor on $\mathcal X$ (hence $\mathcal X$ is strictly semi-stable, cf.\ \cite[Definition 2.2]{Lange-Sch}); 
        \item $Y=Y_0\cup Y_1$ consists of two components that are both isomorphic to open subsets of affine space: $Y_i\subset \A^{\dim Y_i}$. 
    \end{enumerate} 
   If $X$ admits a $\Lambda$-decomposition of the diagonal, then each component of $Z\coloneq Y_0\cap Y_1$ admits a $\Lambda$-decomposition of the diagonal. 
\end{theorem}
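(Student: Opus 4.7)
The plan is to specialize the hypothesized $\Lambda$-decomposition of the diagonal from the generic fiber $X$ to the special fiber $Y$ via Fulton's specialization map, and then to isolate each component $Z_\alpha \subset Z$ by intersecting, in $\mathcal X \times_R \mathcal X$ with projections $\pi_1, \pi_2$, with the Cartier divisors $\pi_1^*[Y_0]$ and $\pi_2^*[Y_1]$. Write the hypothesis as $[\Delta_X] = [X \times z] + [\Gamma]$ in $\CH_n(X \times_K X) \otimes \Lambda$, with $n = \dim X$, $z \in \CH_0(X)$, and $\Gamma$ supported on $D \times X$ for a proper closed $D \subsetneq X$. Applying Fulton's specialization $\sigma$ (cf.\ \cite[\S 20.3]{fulton}) along the flat family $\mathcal X \times_R \mathcal X \to \Spec R$, strict semistability of $\mathcal X$ yields $\sigma([\Delta_X]) = [\Delta_{Y_0}] + [\Delta_{Y_1}]$. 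Crucially, since $Y_i$ is open in $\A^n$ we have $\CH_0(Y_i) = 0$, whence $\CH_0(Y) \otimes \Lambda = 0$ by Mayer--Vietoris for the cover $Y = Y_0 \cup Y_1$, so the specialization $z_Y$ of $z$ vanishes and $\sigma([X \times z]) = 0$. The specialized equation reduces to
\[
[\Delta_{Y_0}] + [\Delta_{Y_1}] = \sigma([\Gamma]) \in \CH_n(Y \times_k Y) \otimes \Lambda,
\]
with $\sigma([\Gamma])$ supported on $D_Y \times_k Y$ for a proper closed $D_Y \subsetneq Y$.

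Now lift to $\mathcal X \times_R \mathcal X$ by taking closures, producing $[\Delta_{\mathcal X/R}] = \overline{[X \times z]} + \overline{[\Gamma]} + W$ in $\CH_{n+1}(\mathcal X \times_R \mathcal X) \otimes \Lambda$, with $W$ a correction class supported on $Y \times_k Y$. The geometric key is the identity
\[
[\Delta_{\mathcal X/R}] \cdot \pi_1^*[Y_0] \cdot \pi_2^*[Y_1] = [\Delta_Z] \in \CH_{n-1}(Y_0 \times_k Y_1) \otimes \Lambda,
\]
which follows from the SNC transversality of $Y_0 \cap Y_1 = Z$: the first intersection restricts $\Delta_{\mathcal X/R}$ to $\Delta_{Y_0}$, and a further intersection with $\pi_2^*[Y_1]$ pulls back the Cartier divisor $[Y_1]$ via $Y_0 \hookrightarrow \mathcal X$ to $[Z]$. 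Moving $z$ into general position so that $\overline z \cap Z = \emptyset$ (possible because $Z$ has codimension $2$ in $\mathcal X$ while $\overline z$ is a curve), the excess-intersection formula gives $\overline{[X \times z]} \cdot \pi_1^*[Y_0] \cdot \pi_2^*[Y_1] = 0$, since the excess bundle $N_{Y_1/\mathcal X}|_{\overline z \cap Y_1}$ is a line bundle on a zero-dimensional scheme and hence has vanishing first Chern class. Consequently,
\[
[\Delta_Z] = \overline{[\Gamma]} \cdot \pi_1^*[Y_0] \cdot \pi_2^*[Y_1] + W \cdot \pi_1^*[Y_0] \cdot \pi_2^*[Y_1].
\]

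Finally, fix a component $Z_\alpha \subset Z$ and restrict the above to $Z_\alpha \times_k Z_\alpha$. After moving $\Gamma$ so that its closure $\overline D$ does not contain $Z_\alpha$, the $\overline{[\Gamma]}$-contribution is supported on a proper closed subset not dominating the first factor. The correction $W$, a priori uncontrolled, is handled once more via the hypothesis $Y_i \subset \A^n$: combining $\CH_0(Y_i) \otimes \Lambda = 0$ with a Mayer--Vietoris analysis of cycles supported on $Y \times_k Y$ shows that the $W$-contribution to $\CH_{n-1}(Z_\alpha \times_k Z_\alpha) \otimes \Lambda$ takes the form $[Z_\alpha \times w_\alpha] + [\Gamma'_\alpha]$ for some $w_\alpha \in \CH_0(Z_\alpha)$ and some $\Gamma'_\alpha$ not dominating the first factor. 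Combining, one obtains
\[
[\Delta_{Z_\alpha}] = [Z_\alpha \times w_\alpha] + [\Gamma_\alpha] \in \CH_{n-1}(Z_\alpha \times_k Z_\alpha) \otimes \Lambda
\]
with $\Gamma_\alpha$ not dominating the first factor, which is the desired $\Lambda$-decomposition of the diagonal for $Z_\alpha$.

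The principal technical difficulty is twofold. First, $\mathcal X \times_R \mathcal X$ is non-regular at points where several components of $Y \times_k Y$ meet, so all intersection products must be handled via the refined Gysin pullback for Cartier divisors with careful tracking of excess contributions (passage to a suitable alteration or blow-up may simplify matters). Second, controlling the special-fiber correction $W$ is the subtlest step: it is precisely here that the assumption $Y_i \cong$ open in $\A^n$ enters essentially, since the vanishing $\CH_0(Y_i) \otimes \Lambda = 0$ is what trivializes the ambiguity arising from lifting zero-cycles from the generic fiber to the special fiber.
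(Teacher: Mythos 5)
Your overall strategy -- specialize the decomposition, use that $Y_0,Y_1$ are open in affine space to kill the terms living on the components, and restrict to $Z$ -- is the right flavor, but the central technical step is asserted rather than proved. The correction class $W$ supported on $Y\times_k Y=\bigcup_{i,j}Y_i\times Y_j$ is an $(n+1)$-cycle, and its contribution to $\CH_{n-1}(Z_\alpha\times_k Z_\alpha)\otimes\Lambda$ after intersecting with $\pi_1^\ast[Y_0]\cdot\pi_2^\ast[Y_1]$ is exactly the hard content of the theorem; your ``Mayer--Vietoris analysis'' is a placeholder for it. Note also that the vanishing you invoke, $\CH_0(Y_i)\otimes\Lambda=0$, is not the relevant one: the pieces of $W$ contributing to zero-cycles on $Z$ after the Gysin maps are governed by \emph{one}-cycles on the components, and the statement actually needed is the surjectivity of
\[
\Psi\colon \bigl(\CH_1(Y_0^\circ\times_kL)\otimes\Lambda\bigr)\oplus\bigl(\CH_1(Y_1^\circ\times_kL)\otimes\Lambda\bigr)\longrightarrow \CH_0(Z^\circ\times_kL)\otimes\Lambda,\qquad (\gamma_0,\gamma_1)\mapsto\gamma_0|_{Z^\circ}-\gamma_1|_{Z^\circ},
\]
for all field extensions $L/k$, which is \cite[Theorem 4.2]{Lange-Sch} and carries essentially all the work. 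Once one has this, the hypothesis $Y_i\subset\A^{\dim Y_i}$ kills the source (as $\CH_1$ of an open subset of affine space vanishes) and gives $\CH_0(Z^\circ\times_kL)\otimes\Lambda=0$ outright, after which the decomposition for each component of $Z$ follows from the localization sequence because the removed locus $W_Z$ is finite.

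Two further points would need repair even granting the above. First, you cannot ``move $z$'' or ``move $\Gamma$'' so that the closures $\overline z$, $\overline D$ avoid $Z_\alpha$: these are closures of fixed cycles on the generic fiber, and replacing them by rationally equivalent cycles changes the closure only up to new boundary terms on the special fiber, which reintroduces the problem. The paper instead shrinks $\mathcal X$ to $\mathcal X^\circ=\mathcal X\setminus W_{\mathcal X}$, where $W_{\mathcal X}$ is the closure of the finite bad locus of the decomposition, and works with the resulting open subschemes throughout -- this is also why the statement is formulated for non-proper $\mathcal X$. Second, $\mathcal X\times_R\mathcal X$ is \emph{not} regular (already $\Spec R[x,y,u,v]/(xy-t,uv-t)$ is singular where all four coordinates vanish), and several of your intersection-product identities involve excess intersections there; the paper's proof sidesteps this entirely by never forming $\mathcal X\times_R\mathcal X$, working instead with the zero-cycle formulation $\delta_X=z_{k(X)}$ over function-field extensions.
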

\begin{proof}
Assume that $X$ admits a $\Lambda$-decomposition of the diagonal.
Then there is a closed zero-dimensional subset $W_X\subset X$ such that $\delta_X$ vanishes in $\CH_0(U\times_{\bar K}\bar K(X))\otimes_\Z \Lambda$, where $U=X\setminus W_X$.
Up to enlarging $W_X$, we can assume that it is defined over $K$ and hence we can take its closure  $W_{\mathcal X}\subset \mathcal X$.
Let $\mathcal X^\circ\coloneq \mathcal X\setminus W_{\mathcal X}$ with special fiber $Y^\circ=Y_0^\circ\cup Y_1^{\circ}$ and $Z^\circ=Y_0^\circ\cap Y_1^{\circ}=Z\setminus W_Z$, 
where $W_Z\coloneq Z\cap W_{\mathcal X}$.
For any field extension $L/k$, the map 
    $$
        \Psi_{Y^\circ_L} \colon (\CH_1(Y^\circ_0 \times_k L)\otimes_\Z \Lambda) \oplus (\CH_1(Y^\circ_1 \times_k L)\otimes_\Z \Lambda) \longrightarrow \CH_0(Z^\circ \times_k L)\otimes_\Z \Lambda,\quad (\gamma_0,\gamma_1) \mapsto  
        \gamma_0|_{Z^\circ}-\gamma_1|_{Z^\circ} 
    $$
is surjective by \cite[Theorem 4.2]{Lange-Sch}.
Since $Y_i$ and hence $Y_i^\circ$ is an open subset of affine space, the source of the map $\Psi_{Y^\circ_L}$ is trivial.
Hence, $\CH_0(Z^\circ \times_k L)\otimes_\Z \Lambda=0$ for all $L/k$.
This implies that each component of $Z$ admits a $\Lambda$-decomposition of the diagonal.
To see this, we may assume that $Z$ is irreducible. 
The localization sequence \cite[Proposition 1.8]{fulton} then implies that the diagonal point $\delta_Z\in \CH_0(Z_{k(Z)})\otimes_\Z \Lambda$ lies in the image of $\CH_0((W_Z)_{k(Z)})\otimes_\Z \Lambda$, which proves the claim, because $W_Z$ is a finite set of points. 
This concludes the proof.
\end{proof}

Note that a variety which admits a decomposition of the diagonal also admits a $\Lambda$-decomposition of the diagonal for any ring $\Lambda$.
Since $\dim Z=\dim X-1$, the above theorem therefore yields an inductive approach to the non-existence of ($\Lambda$-)decompositions of the diagonal.
In \cite{Lange-Sch}, this is applied to degenerations that are motivated by Moe's double cone construction in \cite{moe}, which we explain next.

\begin{example}[Double cone construction \cite{moe,Lange-Sch}] \label{ex:double-cone}
Let $d\geq 4$ be an integer and let 
$$
g,h_1,h_2\in   k[x_0,x_1,\dots ,x_n,y_2,y_3,\dots ,y_{r+1}]
$$ 
be homogeneous polynomials of degrees $|g|=d$, $|h_i|=d-2i$ and consider
$$
f\coloneqq g+h_1x_0y_1 +h_2x_0^2y_1^2 + x_0^{d-1} z + x_0^{d-2} y_1 w\in k[x_0,x_1,\dots ,x_n,y_1,y_2,y_3,\dots ,y_{r+1},z,w]
%A[z,w].
%k[x_0,x_1,x_2,y_1,y_2,y_3,z,w].b
$$
Note that $g$ and $h_1,h_2$ do not contain the variable $y_1$.
Consider the discrete valuation ring $R=k[[t]]$ and the flat projective $R$-scheme given by
\begin{align} \label{eq:mathcalX-doublecone}
\mathcal{X} \coloneqq \{t x_0^2 + zw = f = 0\} \subset \CP^{N+3}_R ,
\end{align}
where $N\coloneq n+r$.
We aim to describe the geometric generic fiber $X$ and the special fiber $Y$, respectively:
\begin{itemize}
\item to describe the geometric generic fiber $X$, we eliminate $w$ via $w=-tx_0^2/z$ and make the substitution $y_1\to zy_1/x_0$.
We then find that  $X$ is birational to the degree $d$ hypersurface given by 
\begin{align} \label{eq:double-cone-generic-fiber}
\{g+h_1zy_1 +h_2z^2y_1^2 + x_0^{d-1} z -tx_0^{d-1}y_1 =0\} \subset \CP^{N+2}_{\Frac R}.
\end{align}
    \item the special fiber $Y$ decomposes into a union $Y = Y_0 \cup Y_1$, where
    $$
    Y_0=\{g+h_1x_0y_1+h_2x_0^2y_1^2+x_0^{d-1}z=0\}\subset \CP^{N+2}_k\quad \text{and}\quad Y_1=\{g+h_1x_0y_1+h_2x_0^2y_1^2+x_0^{d-2}y_1w=0\}\subset \CP^{N+2}_k .
    $$
    These are hypersurfaces of degree $d$ that have a point of multiplicity $d-1$ (given by the vanishing of all $x$ and $y$-variables), hence these hypersurfaces are rational (as long as they are irreducible).
    Their intersection is given by the degree $d$ hypersurface
$$
    Z \coloneqq  \{g+h_1x_0y_1+h_2x_0^2y_1^2=0\}\subset \CP^{N+1}_k .
$$
\end{itemize}
\end{example}

We are now in the position to sketch the proof of Theorem \ref{thm:Lange-Sch-main}.

\begin{proof}[Sketch of proof of Theorem \ref{thm:Lange-Sch-main}]
Let $d\geq 4$ and $N\leq (d+1)2^{d-4}$. 
We fix $d$ and aim to apply induction on $N$.
If $N=n+r$, $n=d-2$ with $r\leq 2^{n}-2$, then the result follows from Theorem \ref{thm:JAMS-body}. 
This is the base case of the induction. 
Let us explain the next step, i.e.\ how to go from $N=n+r$, $n=d-2$ and $r=2^{n}-2$ to $N+1$. 
We apply the double cone construction as in Example \ref{ex:double-cone} such that the hypersurface $Z\subset \CP^{N+1}_k$ is cut out by equation \eqref{eq:F-JAMS}.
We then replace the given degeneration $\mathcal X\to \Spec R$ from \eqref{eq:mathcalX-doublecone} by an open subset $\mathcal X^\circ$ with special fiber $Y^\circ=Y_1^\circ\cup Y_2^\circ$ and $Z^\circ\coloneq Y_1^\circ\cap Y_2^\circ$.
We choose this open subset such that $\mathcal X^\circ$ is regular and semi-stable over $R$, and such that $Y_i^\circ\subset \mathbb A^{\dim Y_i}$.
This amounts to removing several closed subsets from $\mathcal X$; we choose them carefully so that the arguments in \cite{Sch-JAMS} still imply that the (non-proper) variety $Z^\circ$ does not admit a $\Z/2$-decomposition of the diagonal.
(In fact, to make this work one has to change the degeneration slightly, and replace the monomial $x_0^{d-2}y_1w$ in the definition of $f$ by $x_0^{d-2}(x_0+\lambda y_1)w $ for some parameter $\lambda$, but we ignore these additional difficulties here, cf.\ \cite[(5.3)]{Lange-Sch}.)
Applying Theorem \ref{thm:Obstruction-Lange-Sch} to this set-up, we conclude that the geometric generic fiber $X^\circ$, which is an open subset of the degree $d$ hypersurface \eqref{eq:double-cone-generic-fiber} of dimension $N+1$, does not admit a $\Z/2$-decomposition of the diagonal.
We may arrange that $X^\circ$ is smooth and so it follows by applying Fulton's specialization map on Chow groups  \cite[\S 20.3]{fulton} that the very general hypersurface of degree $d$ and dimension $N+1$ does not admit a $\Z/2$-decomposition of the diagonal.
We have thus explained how to pass from degree $d$ and dimension $N$ (with $N=n+2^{n}-2$ and $d=n+2$) to degree $d$ and dimension $N+1$.

To iterate the procedure, one now replaces the role of the equation \eqref{eq:F-JAMS} from \cite{Sch-JAMS} by the equation of the generic fiber \eqref{eq:double-cone-generic-fiber} of the double cone construction from the previous step.
This works as long as $h_i$ has $x_0^i$ as a factor.  
After at most $(d-2)/2$ steps this fails, but we can relabel the $y_j$ and continue whenever some $y_j$ appears only linearly or quadratically in monomials divisible by sufficiently high powers of $x_0$.
In \eqref{eq:F-JAMS} each $y_j$ appears only in the term $x_0^{d-\deg c_j-2}c_jy_j^2$, where $c_j=\pm \prod x_i^{\epsilon_i}$ is a square-free monomial in $x_1,\dots,x_n$ (cf.\ \eqref{eq:FermatPfister}).
For $j=1,\dots,2^n-1$, the $c_j$ essentially run through all such monomials, whose average degree is about $n/2=(d-2)/2$.
Thus the process can be repeated roughly $r\cdot \lfloor (d-2)/4\rfloor$ times. 
This covers all cases
\[
N\leq d-2+ 2^{d-2}+(2^{d-2}-2)\cdot \lfloor (d-2)/4\rfloor  \approx  2^{d-4}(d+1),
\]
which explains the order of magnitude of the bound in Theorem \ref{thm:Lange-Sch-main}.

We stress that this outline only conveys the main idea; the full proof requires resolving several technical difficulties, for more details see \cite{Lange-Sch}. 
\end{proof}

\section{Obstructing cohomological decompositions of the diagonal of cubic threefolds} \label{sec:coho-decomposition}

Let $X$ be a smooth complex projective threefold that is rationally connected.
Then $h^{3,0}(X)=0$ and the intermediate Jacobian 
$$
JX=H^{1,2}(X)/H^3(X,\Z) ,
$$
introduced by Weil and Griffiths,
is a principally polarized abelian variety, whose theta divisor $\Theta_X\subset JX$, induced by cup product and Poincar\'e duality, is unique up to translation.

\begin{theorem}[\cite{clemens-griffiths}]
Let $X$ be a smooth projective rationally connected threefold over $\C$.
 If $X$ is rational, then $(JX,\Theta_X)$ is isomorphic to a product of Jacobians of curves.
The latter fails in the case of a smooth cubic threefold, which is therefore not rational.
\end{theorem}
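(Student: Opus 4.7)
The plan is to follow the original Clemens--Griffiths argument, which splits into a birational-invariance statement for intermediate Jacobians and an incompatibility check specific to the cubic threefold. First I would analyze how $(JX,\Theta_X)$ transforms under elementary birational modifications. By weak factorization (or the older factorization theorems for smooth projective threefolds), any birational map between smooth projective threefolds factors into a sequence of blow-ups and blow-downs along smooth centers, which are either points or smooth curves. Blowing up a point does not alter $H^3$, because the blow-up formula adds only odd-degree cohomology of a point, which vanishes; hence the intermediate Jacobian is unchanged. Blowing up a smooth curve $C\subset X$ yields the Hodge-theoretic identification
\[
H^3(\tilde X,\Z) \cong H^3(X,\Z) \oplus H^1(C,\Z)(-1),
\]
orthogonal for the cup product pairing, and thus an isomorphism of principally polarized abelian varieties $(J\tilde X,\Theta_{\tilde X}) \cong (JX,\Theta_X) \oplus (JC,\Theta_C)$.

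If $X$ is rational, iterating the above on both sides of a factorization $X \dashrightarrow \CP^3$ gives an isomorphism of ppav's
\[
(JX,\Theta_X) \oplus \bigoplus_i (JC_i,\Theta_{C_i}) \cong \bigoplus_j (JC'_j,\Theta_{C'_j}),
\]
using $J\CP^3=0$. By the Clemens--Griffiths lemma, a principally polarized abelian variety decomposes uniquely into indecomposable ppav factors---a Krull--Schmidt-type statement relying on positivity of the polarization. Since a Jacobian of a (possibly reducible) curve is already a product of Jacobians of its components, cancellation yields that $(JX,\Theta_X)$ itself is a product of Jacobians of smooth projective curves.

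The substantive step is to show that this fails for a smooth cubic threefold $X\subset \CP^4_\C$, where $\dim JX=5$. The key geometric input is a precise description of the singularities of $\Theta_X$: one proves that $\Theta_X$ has a unique singular point (up to the $(-1)$-involution), at which the projectivized tangent cone is isomorphic to $X$ itself, a smooth cubic threefold. On the other hand, for any nontrivial product $\prod_i JC_i$ of Jacobians of curves, the theta divisor $\sum_i \pi_i^{-1}(\Theta_{C_i})$ is reducible and hence singular along a subvariety of codimension two; for the Jacobian $JC$ of a single smooth curve $C$ of genus $5$, the singular locus of $\Theta_C$ equals the Brill--Noether locus $W^1_4(C)$, which is positive-dimensional by Martens' theorem. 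Either way, the singular locus of the theta divisor is far too large, contradicting the isolated singular point of $\Theta_X$.

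The genuine difficulty lies in this last step: the singularity analysis of $\Theta_X$ requires studying the Fano surface $F(X)$ of lines on the cubic threefold and its Abel--Jacobi map to $JX$, together with a delicate tangent-cone computation that recovers the cubic $X$ geometrically inside its own intermediate Jacobian. The birational-invariance steps are essentially formal once the blow-up formula is in place, but the irrationality of the cubic threefold rests on this intrinsic geometric property of its theta divisor, which is the original insight of Clemens and Griffiths.
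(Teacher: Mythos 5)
Your outline is correct and is the standard form of the Clemens--Griffiths argument; note that the survey does not prove this theorem but only cites \cite{clemens-griffiths}, so there is no in-paper proof to compare against. The first half (weak factorization, the blow-up formula for $H^3$, and the Krull--Schmidt-type unique decomposition of a principally polarized abelian variety into indecomposable factors, together with the fact that the Jacobian of a connected curve is indecomposable) is exactly the classical reduction; Clemens and Griffiths did it with Hironaka's resolution of the birational map rather than weak factorization, but both work. Two points of precision on the second half. First, the assertion that $\Theta_X$ has a unique singular point whose projectivized tangent cone is $X$ is Beauville's later refinement (1982); Clemens and Griffiths themselves argued via the Gauss map of $\Theta_X$ and the Abel--Jacobi map of the Fano surface, and Mumford gave an alternative proof via the Prym description of $JX$. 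For your dimension count only $\dim \operatorname{Sing}(\Theta_X)\le 0$ is needed, but as you say this still rests on the Fano-surface analysis, which is the genuinely hard input. Second, the positive-dimensionality of $W^1_4(C)$ for a genus-$5$ curve is the existence half of Brill--Noether theory (Kempf, Kleiman--Laksov; here $\rho=1$), not Martens' theorem, which gives the upper bound $\dim W^1_4\le 2$. Finally, be aware that the survey's preferred packaging of the second half is Matsusaka's criterion---$(JX,\Theta_X)$ is a product of Jacobians if and only if the minimal class $[\Theta_X]^{g-1}/(g-1)!$ is represented by an effective curve---since that is the formulation that Voisin's Theorem \ref{thm:coho-dec-diagonal-complex-threefolds} and the matroid-theoretic results of \cite{EGFS} generalize.
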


By Matsusaka's criterion, $(JX,\Theta_X)$ is isomorphic to a product of Jacobians of curves if and only if the minimal cohomology class
\begin{align} \label{eq:min-class-JX}
[\Theta_X]^{g-1}/(g-1)!\in H_2(JX,\Z)
\end{align}
is represented by an effective curve, where $g\coloneq \dim JX=b_3(X)/2$.
In \cite{voisin-JEMS}, Voisin generalized this criterion to stable (or retract) rationality by asking that \eqref{eq:min-class-JX} is represented by some (not necessarily effective) linear combination of curves, see Theorem \ref{thm:coho-dec-diagonal-complex-threefolds} below. 
This obstruction has recently been applied successfully in \cite{EGFS}.
In the following two subsections, we explain the parts of Voisin's work \cite{voisin-JEMS,voisin-min-class} that are used in \cite{EGFS} in some detail.
We start for convenience with a more general set-up over arbitrary fields in Section \ref{subsec:coho-dec-diag-1} and specialize to complex rationally connected varieties only in Section \ref{subsec:coho-dec-diag-2}.

\subsection{Cohomological decompositions of the diagonal.} \label{subsec:coho-dec-diag-1}
We follow Voisin’s ideas in \cite[Theorem 3.1]{voisin-JEMS} and describe them in slightly greater generality. 

Let $k$ be an arbitrary field.
For concreteness, we fix a prime $\ell$ invertible in $k$ and for an algebraic $k$-scheme $X$ we consider the twisted cohomology theory
$$
H^i(X,n)\coloneq\begin{cases}
    H^i_{cont}(X_{\text{\'et}},\Z_\ell(n))=H^i(X_{\text{pro\'et}},\Z_\ell(n))\quad &\text{if $k\neq \C$};\\
    H^i(X,\Z)\coloneq H^i_{sing}(X(\C),\Z)\quad &\text{if $k=\C$}.
\end{cases}
$$
In the above situation, Jannsen's continuous \'etale cohomology \cite{jannsen} agrees with $\ell$-adic pro-\'etale cohomology of Bhatt--Scholze \cite{bhatt-scholze}.
For basic properties of this cohomology theory that we will use, see \cite[Appendix A]{Sch-moving}.

\begin{remark}
The results presented in this subsection work more generally for any twisted cohomology theory with support which satisfies some basic properties, such as the  Bloch--Ogus axioms in \cite[\S 1]{BO}.
In fact, we only need: cup products, pullbacks, cycle classes, proper pushforwards along proper morphisms of smooth projective equi-dimensional schemes, and the projection formula, together with some natural compatibilities.
\end{remark} 

Furthermore, let $X,Y$ be smooth projective equi-dimensional $k$-schemes of dimensions $d_Y\coloneq \dim Y$ and  $d_X\coloneq \dim X$.
For some $n\geq 0$, let $\Gamma\in \CH_{n}(Y\times X)$ be an $n$-dimensional cycle on $Y\times X$.
Then, as usual, we define an action
$$
\Gamma^\ast\colon H^i(X,m)\longrightarrow H^{i+2\dim Y-2n}(Y,m+\dim Y-n),\quad \quad \alpha\mapsto  p_\ast( \cl(\Gamma)\cup q^\ast\alpha) ,
$$
where $p\colon Y\times X\to Y$ and $q\colon Y\times X\to X$ denote the natural projections.
If $\Gamma=\Gamma_f$ is the graph of a morphism $f\colon Y\to X$, then $\Gamma^\ast \alpha=f^\ast \alpha$.

\begin{definition}[\cite{voisin-JEMS,voisin-survey}] \label{def:coho-decomposition-of-diagonal}
Let $X$ be a smooth projective $k$-variety of dimension $n$.
We say that $X$ admits a \emph{cohomological decomposition of the diagonal} (with respect to the chosen cohomology theory) if  there is a cycle $\Gamma\in \CH_n(X\times X)$ whose support is contained in $E\times X$ for some divisor $E\subset X$ and a zero-cycle $z\in \CH_0(X)$, such that
\begin{align} \label{eq:def:coho-decomposition-of-diagonal}
\cl(\Delta_X-X\times z-\Gamma)=0\in H^{2n}(X\times X,n) .
\end{align}
\end{definition}

The existence of cycle class maps ensures that a variety which admits a decomposition of the diagonal (see Definition \ref{def:coho-decomposition-of-diagonal}) also admits a cohomological decomposition.

\begin{lemma} \label{lem:Gamma*cup-1}
Let $X,Y$ be smooth projective $k$-varieties with $n\coloneq \dim Y$.
Let $D\subset Y$ be a smooth irreducible divisor.
Denote the natural inclusion by $\iota \colon D\to Y$ and let $\Gamma\in \CH_n(D\times X)$ be a cycle on $D\times X$.
Let $Z_1,Z_2\subset D$ be smooth divisors such that $\mathcal O_X(D)|_D=\mathcal O_D(Z_1-Z_2)$ and let $f_i\colon Z_i\hookrightarrow Y$ and $g_i\colon Z_i\hookrightarrow D$ denote the natural inclusions.
Let further $\Gamma_i\in \CH_{n-1}(Z_i\times X)$ denote the pullback of $\Gamma$ along $g_i\times \id \colon Z_i\times X\hookrightarrow D\times X$.
Then the following holds for all $\alpha\in H^j(X,a)$ and all $\beta\in H^l(X,b)$: 
$$
\iota_{\ast}\Gamma^\ast \alpha\cup \iota_{\ast}\Gamma^\ast \beta=f_{1\ast} ( \Gamma_1^\ast\alpha\cup \Gamma_1^\ast\beta )-f_{2\ast} ( \Gamma_2^\ast\alpha\cup \Gamma_2^\ast\beta )  .
$$ 
\end{lemma}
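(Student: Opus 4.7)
The plan is to reduce the identity to two applications of the projection formula combined with a self-intersection computation for the divisor $D\subset Y$. First I would rewrite the left-hand side by pulling one of the two factors inside the pushforward: since $\iota$ is a closed immersion of smooth varieties, the projection formula gives
$$
\iota_\ast \Gamma^\ast\alpha \cup \iota_\ast \Gamma^\ast\beta \;=\; \iota_\ast\bigl(\Gamma^\ast\alpha \cup \iota^\ast \iota_\ast \Gamma^\ast\beta\bigr).
$$
The standard self-intersection formula identifies $\iota^\ast \iota_\ast$ on cohomology with cup product against $c_1(\mathcal N_{D/Y})=c_1(\mathcal O_Y(D)|_D)$. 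By hypothesis this Chern class equals $\cl(Z_1)-\cl(Z_2)\in H^2(D,1)$, so
$$
\iota^\ast\iota_\ast \Gamma^\ast\beta \;=\; \bigl(\cl(Z_1)-\cl(Z_2)\bigr)\cup \Gamma^\ast\beta.
$$

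Next I would split the resulting expression as a difference indexed by $i=1,2$ and apply the projection formula a second time, now for the closed embeddings $g_i\colon Z_i\hookrightarrow D$ of smooth divisors. Since $\cl(Z_i)=g_{i\ast}(1)$, cup product with $\cl(Z_i)$ is $g_{i\ast}\circ g_i^\ast$, and so
$$
\iota_\ast\bigl(\Gamma^\ast\alpha\cup \cl(Z_i)\cup \Gamma^\ast\beta\bigr) \;=\; \iota_\ast g_{i\ast}\bigl(g_i^\ast\Gamma^\ast\alpha \cup g_i^\ast\Gamma^\ast\beta\bigr) \;=\; f_{i\ast}\bigl(g_i^\ast\Gamma^\ast\alpha \cup g_i^\ast\Gamma^\ast\beta\bigr),
$$
using $f_i=\iota\circ g_i$ at the end.

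The remaining ingredient is the compatibility $g_i^\ast\Gamma^\ast=\Gamma_i^\ast$ between restriction of correspondences and the action in cohomology. This follows from proper base change (or flat base change applied to the smooth projection $q$) for the Cartesian square
$$
\xymatrix{
Z_i\times X \ar[r]^{g_i\times \id} \ar[d]_{p_{Z_i}} & D\times X \ar[d]^{p_D} \\
Z_i \ar[r]^{g_i} & D
}
$$
together with naturality of cup product and the identity $(g_i\times\id)^\ast\cl(\Gamma)=\cl(\Gamma_i)$ coming from refined Gysin pullback (the cycle $\Gamma_i$ is defined precisely as $(g_i\times\id)^\ast\Gamma$). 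Substituting this into the previous display yields $f_{i\ast}(\Gamma_i^\ast\alpha\cup \Gamma_i^\ast\beta)$ and the claimed formula.

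The main technical point I expect to need care with is the second bullet: one must justify the self-intersection formula and the identity $\iota^\ast \iota_\ast=\cup\, c_1(\mathcal N_{D/Y})$ in the chosen cohomology theory (continuous \'etale $\ell$-adic or singular integral over $\C$). Both are classical and already used in the references cited in the appendix \cite{Sch-moving}, but the proof should explicitly invoke them together with the basic Bloch--Ogus-type compatibilities (projection formula, flat base change, cycle class of a refined pullback), rather than re-deriving them. Once these standard compatibilities are in place, the computation is a two-line formal manipulation.
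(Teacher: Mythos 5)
Your proposal is correct and follows essentially the same route as the paper's proof: projection formula for $\iota$, the self-intersection identity $\iota^\ast\iota_\ast=\cup\,c_1(\mathcal O_Y(D)|_D)=\cup(\cl(Z_1)-\cl(Z_2))$, a second application of the projection formula for $g_i$ using $\cl(Z_i)=g_{i\ast}(1)$, and the compatibility $g_i^\ast\circ\Gamma^\ast=\Gamma_i^\ast$. Your extra justification of this last compatibility via base change and refined Gysin pullback is a welcome elaboration of a step the paper only asserts.
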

\begin{proof}
By the projection formula, we have
\begin{align*}
\iota_{\ast}\Gamma^\ast \alpha\cup \iota_{\ast}\Gamma^\ast \beta=\iota_{\ast}(\iota^\ast\iota_\ast\Gamma^\ast \alpha\cup \Gamma^\ast \beta)
&=\iota_{\ast}(c_1(\mathcal O_X(D)|_D)\cup \Gamma^\ast \alpha\cup \Gamma^\ast \beta) \\
&=\iota_{\ast}(\cl_D(Z_1)\cup \Gamma^\ast \alpha\cup \Gamma^\ast \beta)-\iota_{\ast}(\cl_D(Z_2)\cup \Gamma^\ast \alpha\cup \Gamma^\ast \beta) .
\end{align*}
%Let $g_i\colon Z_i\hookrightarrow D$ denote the inclusion and let $\Gamma_i\in \CH_{n-1}(Z_i\times X)$ denotes the pullback of $\Gamma$ along $g_i\times \id \colon Z_i\times X\hookrightarrow D\times X$, then $\cl_D(Z_i)=g_{i\ast}[Z_i]$.
Note that  $\cl_D(Z_i)=g_{i\ast}\cl_{Z_i}(Z_i)=g_{i\ast}1$.
Moreover, since $f_i=\iota\circ g_i $, the projection formula yields
$$ 
\iota_{\ast}((g_{i\ast} 1\cup \Gamma^\ast \alpha\cup \Gamma^\ast \beta) 
=f_{i\ast} (1\cup g_i^\ast \Gamma^\ast\alpha\cup g_i^\ast \Gamma^\ast\beta )=f_{i\ast} (g_i^\ast \Gamma^\ast\alpha\cup g_i^\ast \Gamma^\ast\beta ).
$$
The result follows now from the fact that $g_i^\ast\circ \Gamma^\ast=\Gamma_i^\ast$.
\end{proof}

\begin{lemma} \label{lem:Gamma*cup-2}
Let $X,Y$ be smooth projective $k$-varieties with $n\coloneq \dim Y$.
Let $D_1,D_2\subset Y$ be smooth irreducible divisors with $D_1\neq D_2$ and  smooth intersection $Z\coloneq D_1\cap D_2$.
Denote the natural inclusions by $\iota_1\colon D_1\to Y$, $\iota_2\colon D_2\to Y$, $g_1\colon Z\to D_1$, $g_2\colon Z\to D_2$ and $f\colon Z\to Y$.
For $i=1,2$, let $\Gamma_i\in \CH_n(D_i\times X)$ be a cycle on $D_i\times X$.
Let further $\Gamma_{ii}\in \CH_{n-1}(Z\times X)$ be the pullback of $\Gamma_i$ along $g_i\times \id$.
Then the following holds for all $\alpha\in H^j(X,a)$ and all $ \beta\in H^l(X,b)$:
\begin{align*}
 \iota_{1\ast}\Gamma_1^\ast \alpha\cup \iota_{2\ast}\Gamma_2^\ast \beta =f_\ast( \Gamma_{11}^\ast \alpha \cup \Gamma_{22}^\ast \beta) 
 \end{align*}
 and
 \begin{align*}
 (\iota_{1\ast}\Gamma_1^\ast \alpha \cup \iota_{2\ast}\Gamma_2^\ast \beta)+(\iota_{2\ast}\Gamma_2^\ast \alpha\cup \iota_{1\ast}\Gamma_1^\ast \beta)&=\\
    f_\ast( (\Gamma_{11}+\Gamma_{22})^\ast \alpha &\cup (\Gamma_{11}+\Gamma_{22})^\ast \beta) -f_\ast( \Gamma_{11}^\ast \alpha \cup \Gamma_{11}^\ast \beta)-f_\ast( \Gamma_{22}^\ast \alpha \cup \Gamma_{22}^\ast \beta) .
    \end{align*} 
\end{lemma}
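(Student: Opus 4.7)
The plan is to reduce both identities to a single ingredient: the transverse base-change formula $\iota_1^\ast \circ \iota_{2\ast} = g_{1\ast} \circ g_2^\ast$ on the chosen cohomology theory. This arises because $D_1$ and $D_2$ are smooth divisors in $Y$ meeting in the smooth codimension-two subscheme $Z = D_1 \cap D_2$, so the square
\[
\xymatrix{
Z \ar[r]^{g_2} \ar[d]_{g_1} & D_2 \ar[d]^{\iota_2} \\
D_1 \ar[r]^{\iota_1} & Y
}
\]
is Cartesian and transverse (the excess normal bundle vanishes), and the base-change identity is a standard consequence of the Bloch--Ogus-type axioms satisfied by continuous \'etale cohomology (and by singular cohomology in the complex case).

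For the first identity I would first observe that $g_i^\ast \circ \Gamma_i^\ast = \Gamma_{ii}^\ast$; this follows from proper base change applied to the smooth Cartesian square relating $Z \times X$ and $D_i \times X$ over $Z \to D_i$, combined with the definition of $\Gamma_{ii}$ as the pullback of $\Gamma_i$ along $g_i \times \id$. The rest is a double application of the projection formula bracketing the transverse base-change identity: starting from $\iota_{1\ast}\Gamma_1^\ast\alpha \cup \iota_{2\ast}\Gamma_2^\ast\beta$, pull $\iota_{1\ast}$ out by the projection formula, rewrite $\iota_1^\ast \iota_{2\ast}\Gamma_2^\ast\beta = g_{1\ast} g_2^\ast \Gamma_2^\ast\beta = g_{1\ast}\Gamma_{22}^\ast\beta$, apply the projection formula once more for $g_{1\ast}$, and use $\iota_{1\ast} \circ g_{1\ast} = f_\ast$ together with $g_1^\ast \circ \Gamma_1^\ast = \Gamma_{11}^\ast$ to land on $f_\ast(\Gamma_{11}^\ast\alpha \cup \Gamma_{22}^\ast\beta)$.

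The second identity is then a formal consequence of the first by bilinearity of the cup product and of $\Gamma \mapsto \Gamma^\ast$. Applying the first identity to both terms on the left hand side yields $f_\ast(\Gamma_{11}^\ast\alpha \cup \Gamma_{22}^\ast\beta) + f_\ast(\Gamma_{22}^\ast\alpha \cup \Gamma_{11}^\ast\beta)$, while on the right hand side one expands $(\Gamma_{11}+\Gamma_{22})^\ast\alpha \cup (\Gamma_{11}+\Gamma_{22})^\ast\beta$ and cancels the two diagonal terms $\Gamma_{ii}^\ast\alpha \cup \Gamma_{ii}^\ast\beta$ against the subtracted terms, leaving exactly the required mixed terms. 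The genuine content is therefore concentrated in the first identity; the main potential obstacle is purely technical, namely making the transverse base-change formula fully precise in the chosen cohomology theory. For continuous \'etale (and singular) cohomology this is standard and does not interact with the specific cycles $\Gamma_i$ beyond the compatibility $g_i^\ast \Gamma_i^\ast = \Gamma_{ii}^\ast$. Notably, in contrast to Lemma \ref{lem:Gamma*cup-1}, where the key input was the self-intersection relation $\mathcal O_Y(D)|_D = \mathcal O_D(Z_1 - Z_2)$, here the relevant geometric input is the transversality of $D_1$ and $D_2$.
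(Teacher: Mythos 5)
Your proposal is correct and follows essentially the same route as the paper: two applications of the projection formula bracketing the transverse base-change identity $\iota_i^\ast\iota_{j\ast}=g_{i\ast}g_j^\ast$ (the paper factors through $\iota_{2\ast}$ where you factor through $\iota_{1\ast}$, which is immaterial by symmetry), together with $g_i^\ast\Gamma_i^\ast=\Gamma_{ii}^\ast$ and $\iota_{i\ast}g_{i\ast}=f_\ast$, and the second identity deduced formally by bilinearity.
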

\begin{proof}
The second identity is a formal consequence of the first.
To prove the first identity, we follow the same argument as in Lemma \ref{lem:Gamma*cup-1}.
By the projection formula, we have
$$
\iota_{1\ast}\Gamma_1^\ast \alpha\cup \iota_{2\ast}\Gamma_2^\ast \beta=
 \iota_{2\ast}( (\iota_{2}^\ast\iota_{1\ast}\Gamma_1^\ast \alpha) \cup\Gamma_2^\ast \beta) .
$$
On the level of correspondences, one checks that $\iota_{2}^\ast\iota_{1\ast}=g_{2\ast}g_1^\ast$.
The above identity thus simplifies to
$$
\iota_{1\ast}\Gamma_1^\ast \alpha\cup \iota_{2\ast}\Gamma_2^\ast \beta=
 \iota_{2\ast}( (g_{2\ast}g_{1}^\ast \Gamma_1^\ast \alpha) \cup\Gamma_2^\ast \beta) =
  \iota_{2\ast}( (g_{2\ast}\Gamma_{11}^\ast \alpha) \cup\Gamma_2^\ast \beta).
$$
Using the projection formula once again, we thus get
$$
\iota_{1\ast}\Gamma_1^\ast \alpha\cup \iota_{2\ast}\Gamma_2^\ast \beta=
  \iota_{2\ast} g_{2\ast} ( \Gamma_{11}^\ast \alpha \cup g_2^\ast \Gamma_2^\ast \beta) =f_{\ast}( \Gamma_{11}^\ast \alpha \cup \Gamma_{22}^\ast \beta) .
$$
This concludes the proof.
\end{proof}

\begin{proposition} \label{prop:Gamma*cup-1+2}
Let $X$ and $Y$ be smooth projective $k$-varieties, let $D\subset Y$ be a simple normal crossing divisor and let $\Gamma\in \CH_n(Y\times X)$ be a cycle whose support is contained in $D\times X$.
Then there are smooth projective equi-dimensional schemes $Z_1,Z_2$ of dimension $\dim Z_1=\dim Z_2=n-2$, morphisms $f_i\colon Z_i\to Y$ and cycles $\Gamma_i\in \CH_{n-1}(Z_i\times X)$ such that for all $\alpha\in H^j(X,a)$ and all $ \beta\in H^l(X,b)$:
$$
\Gamma^\ast \alpha\cup \Gamma^\ast \beta = f_{1\ast} ( \Gamma_1^\ast\alpha\cup \Gamma_1^\ast\beta )-f_{2\ast} ( \Gamma_2^\ast\alpha\cup \Gamma_2^\ast\beta ) \in H^{i+l}(Y,a+b) .
$$
\end{proposition}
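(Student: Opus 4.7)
The plan is to reduce to the two constituent lemmas by decomposing $\Gamma$ according to the components of $D$.

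First I would write $D = D_1 \cup \cdots \cup D_r$ as the union of its smooth irreducible components. Since the support of $\Gamma$ is contained in $D \times X = \bigcup_i D_i \times X$, each irreducible component of $\Gamma$ lies in some $D_i \times X$, so one can choose (non-uniquely) cycles $\Gamma^{(i)} \in \CH_n(D_i \times X)$ with $\Gamma = \sum_i (\iota_i \times \id_X)_\ast \Gamma^{(i)}$, where $\iota_i \colon D_i \hookrightarrow Y$ is the inclusion. A direct projection-formula computation then yields
\begin{equation*}
\Gamma^\ast \alpha = \sum_i \iota_{i\ast}(\Gamma^{(i)})^\ast \alpha \in H^\ast(Y, \ast),
\end{equation*}
and analogously for $\beta$.

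Next I would expand $\Gamma^\ast \alpha \cup \Gamma^\ast \beta$ bilinearly and separate the diagonal contributions (one for each $i$) from the off-diagonal ones (grouped by unordered pairs $i<j$ in order to make them symmetric in $\alpha,\beta$). To each diagonal summand I would apply Lemma \ref{lem:Gamma*cup-1} with $D$ replaced by $D_i$; to each symmetrized off-diagonal pair I would apply the second identity of Lemma \ref{lem:Gamma*cup-2} with $D_1, D_2$ replaced by $D_i, D_j$, noting that $D_i \cap D_j$ is smooth of the correct dimension $n-2$ by the SNC assumption. Each application produces finitely many terms of the shape $\pm\, h_\ast\bigl(\Lambda^\ast \alpha \cup \Lambda^\ast \beta\bigr)$ with $h \colon W \to Y$ a proper morphism from a smooth projective equi-dimensional $k$-scheme $W$ of dimension $n-2$ and $\Lambda \in \CH_{n-1}(W \times X)$. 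Taking $Z_1$ (respectively $Z_2$) to be the disjoint union of all source varieties $W$ appearing with a positive (respectively negative) sign, and assembling the morphisms $f_i$ and correspondences $\Gamma_i$ accordingly, gives the desired data.

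The main technical point, and essentially the only place where the hypotheses do real work beyond formal bookkeeping, is the input needed for Lemma \ref{lem:Gamma*cup-1}: one must exhibit smooth divisors $Z_1^{(i)}, Z_2^{(i)} \subset D_i$ with $\mathcal O_Y(D_i)|_{D_i} \cong \mathcal O_{D_i}(Z_1^{(i)} - Z_2^{(i)})$. I would write $\mathcal O_Y(D_i)|_{D_i} \cong A_i \otimes B_i^{-1}$ for sufficiently ample $A_i, B_i$ on $D_i$ and choose smooth members $Z_1^{(i)} \in |A_i|$, $Z_2^{(i)} \in |B_i|$ via Bertini; over a finite field one invokes the Poonen variant of Bertini, or first extends scalars since the resulting identity is cohomological and descends. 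This is the only step that requires genuine care, and it is standard.
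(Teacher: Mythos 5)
Your argument is exactly the paper's proof: decompose $\Gamma=\sum_i\iota_{i\ast}\Gamma^{(i)}$ along the smooth components of $D$, expand the cup product bilinearly, feed the diagonal terms to Lemma \ref{lem:Gamma*cup-1} and the symmetrized off-diagonal terms to the second identity of Lemma \ref{lem:Gamma*cup-2}, and collect the resulting sources by sign into $Z_1$ and $Z_2$. The paper states this in one line; you additionally (and correctly) make explicit the Bertini step producing smooth divisors $Z_1^{(i)},Z_2^{(i)}\subset D_i$ with $\mathcal O_Y(D_i)|_{D_i}\cong\mathcal O_{D_i}(Z_1^{(i)}-Z_2^{(i)})$, which Lemma \ref{lem:Gamma*cup-1} requires and the paper leaves implicit.
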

\begin{proof}
Let $D_i$, $i\in I$, denote the components of $D$.
Let $\iota_i\colon D_i\to Y$ be the inclusion and write $\Gamma=\sum_i \iota_{i\ast}\Gamma_i$ with $\Gamma_i\in \CH_n(D_i\times X)$.
The result in the proposition is then a direct consequence of Lemmas \ref{lem:Gamma*cup-1} and \ref{lem:Gamma*cup-2}.
\end{proof}

For simplicity, we will now assume that $k$ is perfect.
For a  closed point $z\in X$ of an algebraic $k$-scheme $X$ and a class $\alpha\in H^i(X,n)$, we define $\langle z,\alpha\rangle\in H^i(\Spec k,n)$ as the pushforward of $\alpha|_z\in H^i(\Spec \kappa(z),n)$ via the structure map $\Spec \kappa(z)\to \Spec k$.
This extends to zero-cycles by linearity.
Note that this class is automatically zero if $H^i(\Spec k,n)=0$, which holds e.g.\ if $k$ is algebraically closed and $i>0$.

\begin{theorem} \label{thm:coho-dec-diagonal}
Let $X$ be a smooth projective variety of dimension $n$ over a perfect field $k$.
Assume that $X$ admits a cohomological decomposition of the diagonal. 
Then,  for $i=1,2$, there is a smooth projective equi-dimensional scheme $Z_i$ of dimension $n-2$, a morphism $f_i\colon Z_i\to X$ and a cycle $\Gamma_i\in \CH_{n-1}(Z_i\times X)$, such that 
for all $\alpha\in H^j(X,a)$ and all $ \beta\in H^l(X,b)$ with $\langle z,\alpha \rangle=0=\langle z,\beta\rangle $, where $z\in \CH_0(X)$ is as in \eqref{eq:def:coho-decomposition-of-diagonal},  we have
$$
e^m\cdot (\alpha\cup\beta) 
= f_{1\ast}( \Gamma_1^\ast\alpha\cup \Gamma_1^\ast\beta ) - f_{2\ast} (\Gamma_2^\ast\alpha\cup \Gamma_2^\ast\beta ) \in H^{j+l}(X,a+b) ,
$$
where $e^m$ denotes some power of the exponential characteristic of $k$.
\end{theorem}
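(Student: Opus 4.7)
The plan is to combine the cohomological decomposition of the diagonal with the bilinear identity of Proposition \ref{prop:Gamma*cup-1+2}, after replacing the support of $\Gamma$ by a simple normal crossing divisor via a suitable alteration. The first step is to reduce $\alpha\cup\beta$ to $\Gamma^\ast\alpha\cup \Gamma^\ast\beta$. Since $\Delta_X$ acts as the identity on cohomology and $\cl(\Delta_X)=\cl(X\times z)+\cl(\Gamma)$ by \eqref{eq:def:coho-decomposition-of-diagonal}, one has $\alpha=(X\times z)^\ast\alpha+\Gamma^\ast\alpha$ in $H^j(X,a)$. A direct computation via the projection formula identifies $(X\times z)^\ast\alpha=\pi^\ast\langle z,\alpha\rangle$, where $\pi\colon X\to \Spec k$ is the structure morphism. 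The hypothesis $\langle z,\alpha\rangle=0$ then gives $\alpha=\Gamma^\ast\alpha$, and similarly $\beta=\Gamma^\ast\beta$, so $\alpha\cup\beta=\Gamma^\ast\alpha\cup \Gamma^\ast\beta$ in $H^{j+l}(X,a+b)$.

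Next I would apply Proposition \ref{prop:Gamma*cup-1+2}, which requires the support of $\Gamma$ to be contained in $D\times X$ for an SNC divisor $D$, whereas the given $E$ need not be SNC. I thus choose a projective alteration $\tau\colon \widetilde Y\to X$ with $\widetilde Y$ smooth of dimension $n$, with $\tau^{-1}(E)\cup \Exc(\tau)$ simple normal crossing, and with $\deg\tau=e^m$ --- a log resolution in characteristic zero, a Gabber-type refinement of de Jong's theorem in positive characteristic. Setting $\widetilde\Gamma:=(\tau,\id_X)^\ast\Gamma\in \CH_n(\widetilde Y\times X)$, this cycle is supported on the SNC divisor $\tau^{-1}(E)\times X$. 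Compatibility of the cycle class with pullback, combined with proper base change for the Cartesian square formed by $(\tau,\id)$, $\tilde p_1$, $p_1$ and $\tau$, gives $\widetilde\Gamma^\ast\alpha=\tau^\ast(\Gamma^\ast\alpha)=\tau^\ast\alpha$, and similarly $\widetilde\Gamma^\ast\beta=\tau^\ast\beta$. Proposition \ref{prop:Gamma*cup-1+2} then produces smooth projective equi-dimensional $\widetilde Z_1,\widetilde Z_2$ of dimension $n-2$, morphisms $\tilde f_i\colon \widetilde Z_i\to \widetilde Y$, and cycles $\widetilde\Gamma_i\in \CH_{n-1}(\widetilde Z_i\times X)$ satisfying
\[
\widetilde\Gamma^\ast\alpha\cup \widetilde\Gamma^\ast\beta=\tilde f_{1\ast}(\widetilde\Gamma_1^\ast\alpha\cup \widetilde\Gamma_1^\ast\beta)-\tilde f_{2\ast}(\widetilde\Gamma_2^\ast\alpha\cup \widetilde\Gamma_2^\ast\beta).
\]

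Finally, applying $\tau_\ast$ and using the projection formula, the left hand side becomes $\tau_\ast\tau^\ast(\alpha\cup\beta)=\deg(\tau)\cdot(\alpha\cup\beta)=e^m\cdot(\alpha\cup\beta)$, while the right hand side takes the claimed form upon setting $Z_i:=\widetilde Z_i$, $f_i:=\tau\circ \tilde f_i$ and $\Gamma_i:=\widetilde\Gamma_i$. The main obstacle is the alteration step: one needs a projective alteration whose degree is a power of the exponential characteristic and which simultaneously renders the support of $\Gamma$ simple normal crossing. This is immediate in characteristic zero via Hironaka, but in positive characteristic relies on the refined alteration theorems of Gabber or Illusie--Temkin. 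A secondary but routine verification is the base change identity $\tilde p_{1\ast}(\tau,\id)^\ast=\tau^\ast p_{1\ast}$ in the cohomology theory at hand, which holds for $\ell$-adic pro-\'etale cohomology (and Betti cohomology when $k=\C$).
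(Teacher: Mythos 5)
Your argument is correct and follows essentially the same route as the paper: a Temkin/Gabber alteration $\tau$ of degree $e^m$ making the support of $\Gamma$ simple normal crossing, Fulton's refined pullback of the cycle, Proposition \ref{prop:Gamma*cup-1+2}, and then $\tau_\ast\tau^\ast=\deg(\tau)\cdot\id$. The only (harmless) reorganization is that you first establish $\alpha=\Gamma^\ast\alpha$ on $X$ and then pull back via base change, whereas the paper pulls back the entire decomposition identity to $Y\times X$ before using the vanishing of $\langle z,\alpha\rangle$ and $\langle z,\beta\rangle$.
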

\begin{proof}
By assumption, there is a divisor $E\subset X$ and a cycle $\Gamma\in \CH_n(X\times X)$ that is supported on $D\times X$, such that $\cl(\Delta_X-X\times z-\Gamma)=0$.
Let $\tau\colon Y\to X$ be a regular alteration, i.e.\ a generically finite proper morphism from a regular variety $Y$, such that $D=\tau^{-1}(E)$ is a simple normal crossing divisor on $Y$.
By work of Temkin \cite{temkin}, we may assume that $\deg \tau=e^m$ is a power of the exponential characteristic of $k$.
We then use Fulton's \cite{fulton} refined pullback map on cycles to arrive at an $n$-dimensional cycle $\Gamma'=(\tau\times \id)^\ast \Gamma$ on $Y\times X$ whose support is contained in $D\times X$.
Note that $(\tau\times \id)^\ast\Delta_X=\Gamma_{\tau}$ is the graph of $\tau$.
Since pullbacks are compatible with cycle classes, we thus find that
$$
\cl(\Gamma_\tau-(Y\times z)-\Gamma')=0\in H^{2n}(Y\times X,n) .
$$ 
By assumptions,  $\langle z,\alpha \rangle=0$ and $\langle z,\beta\rangle =0$ and so $[Y\times z]^\ast \alpha$ and $[Y\times z]^\ast \beta$ vanish.
Hence,  
$$
\tau^\ast(\alpha\cup \beta) =\tau^\ast \alpha\cup \tau^\ast \beta =\Gamma_\tau^\ast \alpha\cup \Gamma_\tau^\ast \beta = {\Gamma'}^\ast\alpha\cup {\Gamma'}^\ast\beta \in H^{j+l}(Y,a+b)  .
$$ 
The result now follows from Proposition \ref{prop:Gamma*cup-1+2} by applying $\tau_\ast$ to both sides, because $\tau_\ast\circ \tau^\ast=\deg(\tau)\cdot \id$.  
\end{proof}

For convenience, let us state what Theorem \ref{thm:coho-dec-diagonal} says for $k=\C$; this recovers parts of \cite[Theorem 3.1]{voisin-JEMS}.

\begin{corollary} \label{cor:coho-dec-diagonal-complex}
Let $X$ be an $n$-dimensional smooth complex projective variety which admits a cohomological decomposition of the diagonal.
Then there are smooth projective equi-dimensional schemes $Z_1,Z_2$ of dimension $n-2$, morphisms $f_i\colon Z_i\to X$ and cycles $\Gamma_i\in \CH_{n-1}(Z_i\times X)$ for $i=1,2$, such that 
for all $\alpha\in H^j(X,\Z)$ and all $ \beta\in H^l(X,\Z)$ with $j,l\geq 1$,
we have
$$
\alpha\cup\beta = f_{1\ast}( \Gamma_1^\ast\alpha\cup \Gamma_1^\ast\beta ) - f_{2\ast} (\Gamma_2^\ast\alpha\cup \Gamma_2^\ast\beta ) \in H^{j+l}(X,\Z) .
$$ 
\end{corollary}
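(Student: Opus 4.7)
The plan is to apply Theorem \ref{thm:coho-dec-diagonal} directly with $k=\C$ and the cohomology theory $H^i(X,n) = H^i_{sing}(X(\C),\Z)$ (which, on smooth projective $\C$-schemes, is independent of the twist $n$). Two features of this special case collapse the general formula of Theorem \ref{thm:coho-dec-diagonal} to the cleaner identity claimed in the corollary. First, since $\operatorname{char}(\C)=0$, the exponential characteristic is $e=1$, so the prefactor $e^m$ disappears. Second, the auxiliary hypothesis $\langle z,\alpha\rangle = 0 = \langle z,\beta\rangle$ of Theorem \ref{thm:coho-dec-diagonal} becomes automatic once $j,l\geq 1$.

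To verify this second point: for any closed point $z'\in X$ we have $\kappa(z')=\C$, so by definition $\alpha|_{z'}\in H^j(\Spec\C,\Z) = H^j_{sing}(\mathrm{pt},\Z)$, and this group vanishes as soon as $j\geq 1$. Pushing forward along the identity $\Spec\C\to\Spec\C$ preserves this vanishing, and linearity in the zero-cycle $z\in\CH_0(X)$ gives $\langle z,\alpha\rangle = 0$. The same argument applied to $\beta$ yields $\langle z,\beta\rangle=0$. Thus the hypotheses of Theorem \ref{thm:coho-dec-diagonal} are satisfied for every pair $(\alpha,\beta)$ with $j,l\geq 1$, and its conclusion reads precisely
\[
\alpha\cup\beta \;=\; f_{1\ast}(\Gamma_1^\ast\alpha\cup\Gamma_1^\ast\beta) - f_{2\ast}(\Gamma_2^\ast\alpha\cup\Gamma_2^\ast\beta)\in H^{j+l}(X,\Z),
\]
with $Z_1,Z_2$, $f_i$, and $\Gamma_i$ as produced there.

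There is no real obstacle: the corollary is essentially an unpacking of Theorem \ref{thm:coho-dec-diagonal} over $\C$, with the two simplifications above making the $e^m$ factor and the point-evaluation conditions invisible. The only point worth emphasizing is that the choice of cycles $Z_i,f_i,\Gamma_i$ is uniform in $(\alpha,\beta)$, since in Theorem \ref{thm:coho-dec-diagonal} they are produced from the fixed cohomological decomposition of the diagonal via Proposition \ref{prop:Gamma*cup-1+2} applied to a fixed regular alteration $\tau\colon Y\to X$; no further universality argument is needed.
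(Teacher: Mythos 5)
Your proposal is correct and is exactly the paper's (implicit) derivation: the corollary is stated as the specialization of Theorem \ref{thm:coho-dec-diagonal} to $k=\C$, where $e^m=1$ and where, as the paper notes just before that theorem, $\langle z,\alpha\rangle$ vanishes automatically because $H^j(\Spec\C,\Z)=0$ for $j\geq 1$. Nothing further is needed.
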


\begin{remark}
Assume for simplicity that $k$ has characteristic zero (so $e^m=1$). 
%As observed by Voisin \cite{voisin-JEMS}, 
Theorem \ref{thm:coho-dec-diagonal} and Corollary \ref{cor:coho-dec-diagonal-complex} are most effective in degree $j=l=\dim X$. 
Indeed, if $X$ admits a cohomological decomposition of the diagonal, then, by Theorem \ref{thm:coho-dec-diagonal}, its middle cohomology embeds into the middle cohomology of a smooth equi-dimensional projective scheme $Z=Z_1\sqcup Z_2$ of dimension $\dim X-2$. 
This embedding is induced by an algebraic cycle and it is compatible with cup products up to a sign change on the pairing for $Z_2$.
For rational $X$, the stronger case $Z_2=\emptyset$ follows directly from the weak factorization theorem. 
For stably or retract rational $X$, the conclusion still holds because such $X$ admit a decomposition of the diagonal, but it is less clear how to obtain the result directly from blow-up formulas and weak factorization, without using algebraic cycles.
\end{remark}

\subsection{Voisin's generalization of the Clemens--Griffiths method.} \label{subsec:coho-dec-diag-2}

%The above obstructions to (cohomological) decompositions of the diagonal 
In the case of rationally connected threefolds, Corollary \ref{cor:coho-dec-diagonal-complex} implies the following, see \cite[Theorem 4.1]{voisin-JEMS} and \cite[Theorem 1.4]{voisin-min-class}. %; cf.\ \cite{shen} for some higher-dimensional cases.

\begin{theorem}[Voisin \cite{voisin-JEMS,voisin-min-class}] \label{thm:coho-dec-diagonal-complex-threefolds}
Let $X$ be a rationally connected smooth complex projective threefold with intermediate Jacobian $JX$ of dimension $g$.
Assume that $X$ admits a cohomological decomposition of the diagonal.
Then there are smooth projective (possibly disconnected) curves $C_1,C_2$, and a morphism
$$
f\colon JC_1\times JC_2\longrightarrow JX  %\quad \quad \text{such that}
$$ 
such that
\begin{enumerate}[label={(\arabic*)}]
    \item There is an abelian variety $B$ and an isomorphism $JC_1\times JC_2\cong JX\times B$ as unpolarized abelian varieties;  \label{item:thm:min-class:2}
    \item The minimal class $[\Theta_X]^{g-1}/(g-1)!$ is algebraic, namely given by $f_\ast [C_1\times 0]- f_\ast[0\times C_2]$, where $C_i\stackrel{AJ}\hookrightarrow JC_i$. \label{item:thm:min-class:1}  
\end{enumerate} 
\end{theorem}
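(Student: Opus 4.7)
The plan is to apply Corollary \ref{cor:coho-dec-diagonal-complex} in the case $n = \dim X = 3$, in which the auxiliary schemes $Z_1, Z_2$ produced there are smooth projective (possibly disconnected) curves that I relabel $C_i$. First I would convert the cycle $\Gamma_i \in \CH_2(C_i \times X)$, viewed as a family of $1$-cycles on $X$ parametrized by $C_i$, into a morphism of abelian varieties: fix a base point on each connected component of $C_i$, use the Abel--Jacobi map to define a morphism $C_i \to JX$, and extend componentwise by the universal property of Jacobians to a homomorphism $\psi_i\colon JC_i \to JX$. I then set
\[
f \colon JC_1 \times JC_2 \longrightarrow JX, \qquad (a_1, a_2) \longmapsto \psi_1(a_1) - \psi_2(a_2).
\]

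To prove assertion (1), I would specialize the identity in Corollary \ref{cor:coho-dec-diagonal-complex} to $j = l = 3$ and integrate over $X$, obtaining
\[
\langle \alpha, \beta \rangle_X \;=\; \langle \Gamma_1^\ast \alpha, \Gamma_1^\ast \beta \rangle_{C_1} \;-\; \langle \Gamma_2^\ast \alpha, \Gamma_2^\ast \beta \rangle_{C_2}, \qquad \alpha, \beta \in H^3(X, \Z),
\]
where the brackets denote the cup-product pairings on middle cohomology. Since the intersection form on $H^3(X, \Z)$ is unimodular by Poincaré duality, this identity exhibits $\Phi \coloneqq (\Gamma_1^\ast, \Gamma_2^\ast)$ as a morphism of integral polarizable Hodge structures embedding $H^3(X, \Z)$ as a saturated sublattice of $H^1(C_1, \Z) \oplus H^1(C_2, \Z)$ which is an isometry onto its image for the signed pairing $\langle\cdot,\cdot\rangle_{C_1} - \langle\cdot,\cdot\rangle_{C_2}$. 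Taking the orthogonal complement $N$ with respect to the signed pairing then yields a direct sum decomposition $H^1(C_1, \Z) \oplus H^1(C_2, \Z) \cong H^3(X, \Z) \oplus N$ of polarizable integral Hodge structures, and associating to $N$ the abelian variety $B$ via Griffiths' construction produces the asserted unpolarized isomorphism $JC_1 \times JC_2 \cong JX \times B$.

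For (2), I would invoke the classical Poincaré formula componentwise, giving $[C_i] = [\Theta_{JC_i}]^{g_i-1}/(g_i-1)!$ in $H_2(JC_i, \Z)$, and expand $[\Theta_{JC_1 \times JC_2}]^{g-1}$ via the binomial theorem and the Künneth formula; only two Künneth components of the correct codimension contribute, yielding $[\Theta_{JC_1 \times JC_2}]^{g-1}/(g-1)! = [C_1 \times 0] + [0 \times C_2]$. Combining the signed-pairing identity of the previous paragraph with the Hodge-theoretic splitting of (1), an explicit Künneth computation in $H^\ast(JC_1 \times JC_2, \Q) = H^\ast(JX \times B, \Q)$ identifies the $JX$-component of this class with $[\Theta_X]^{g-1}/(g-1)!$. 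Since $f$ restricts to $\psi_1$ on $JC_1 \times 0$ and to $-\psi_2$ on $0 \times JC_2$, and $(-1)^\ast$ acts trivially on $H_2$, this gives $[\Theta_X]^{g-1}/(g-1)! = f_\ast[C_1 \times 0] - f_\ast[0 \times C_2]$, as required.

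The hard part will be the splitting step used in (1): promoting the signed-pairing identity into an honest orthogonal decomposition of integral polarizable Hodge structures whose complement is of the form $H^1(B)$ for a bona fide abelian variety $B$. This hinges on unimodularity of the intersection form on $H^3(X, \Z)$ together with the requisite positivity of the restricted Hodge-theoretic polarization on the complement $N$. Once the splitting is in place, the Poincaré-formula computation in (2) becomes essentially sign bookkeeping, reconciling the minus sign in the definition of $f$ with the sign-flipped pairing inherited by $N$ from $H^1(C_2, \Z)$.
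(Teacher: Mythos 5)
Your construction of $f$ and your argument for item (1) follow the paper's proof: you extract the signed cup-product identity in degree $j=l=3$ from Corollary \ref{cor:coho-dec-diagonal-complex}, note that $(\Gamma_1^\ast,\Gamma_2^\ast)$ embeds the unimodular lattice $(H^3(X,\Z),\langle\,,\,\rangle_X)$ isometrically into $H^1(C_1,\Z)\oplus H^1(C_2,\Z)$ equipped with the quasi-polarization $\langle\,,\,\rangle_{C_1}-\langle\,,\,\rangle_{C_2}$, and split off the orthogonal complement $N$. Realizing $f$ via Abel--Jacobi maps $\psi_i$ instead of via the duality between polarizable weight-one Hodge structures and abelian varieties is a harmless variation (and the extra minus sign in your $f$ is invisible on $H_2$). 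The ``positivity on $N$'' that you single out as the hard part is a red herring: the signed pairing need not be positive on $N$; one only needs $N$ to be a sub-Hodge structure (automatic, since the signed pairing is a morphism of Hodge structures) that is polarizable (automatic, being a sub-Hodge structure of the polarizable $H^1(C_1,\Z)\oplus H^1(C_2,\Z)$).

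The genuine gap is in your treatment of item (2). The class $[C_1\times 0]+[0\times C_2]$ is the minimal class of $JC_1\times JC_2$ for the \emph{product} polarization $\pr_1^\ast\Theta_{C_1}+\pr_2^\ast\Theta_{C_2}$ (and the relevant exponent is $g_1+g_2-1$, not $g-1$). The isomorphism $JC_1\times JC_2\cong JX\times B$ is unpolarized, and the splitting $H^1=H^1(JX)\oplus H^1(B)$ is orthogonal only for the \emph{signed} form; the product polarization does not respect it, so ``the $JX$-component'' of the product minimal class under the K\"unneth decomposition for $JX\times B$ is not $[\Theta_X]^{g-1}/(g-1)!$ in general, and this step fails as described. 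The correct route, which is the paper's, is to read the signed identity as saying that $(f^\vee)^\ast\bigl(\pr_1^\ast[\Theta_{C_1}]-\pr_2^\ast[\Theta_{C_2}]\bigr)=[\Theta_X]$ in $H^2(JX,\Z)$, and then to convert this degree-two cohomological identity into $f_\ast[C_1\times 0]-f_\ast[0\times C_2]=[\Theta_X]^{g-1}/(g-1)!$ using that the duality $H^2(A,\Z)\cong H_2(A,\Z)$ induced by a principal polarization sends $[\Theta]$ to $[\Theta]^{\dim A-1}/(\dim A-1)!$ (this is where Poincar\'e's formula actually enters) and intertwines $(f^\vee)^\ast$ with $f_\ast$. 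Your closing remarks gesture at the right sign bookkeeping, but the computation you describe runs through the wrong polarization.
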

\begin{proof}
Recall first the following fact: if $A$ is an abelian variety with principal polarization $[\Theta]\in H^2(A,\Z)$, then it induces an isomorphism between $A$ and its dual $A^\vee$ and we get canonical isomorphisms
\begin{align} \label{eq:H^i(A)=H_i(A)}
H^i(A,\Z)=H_i(A,\Z)^\vee = H_i(A^\vee,\Z)\cong H_i(A,\Z).
\end{align}
This applies in particular to $JX$ and to Jacobians of curves; it will be used frequently in what follows.

By Corollary \ref{cor:coho-dec-diagonal-complex},
there are smooth projective (possibly disconnected) curves $C_1,C_2$, morphisms $f_i\colon C_i\to JX$ and cycles $\Gamma_i\in \CH_{2}(C_i\times X)$ for $i=1,2$ such that for all $\alpha,\beta\in H^3(X,\Z)$ we have
\begin{align} \label{eq:cup-product}
\langle \alpha,\beta\rangle_X=\langle \Gamma_1^\ast \alpha,\Gamma_1^\ast\beta\rangle_{C_1}-\langle \Gamma_2^\ast\alpha,\Gamma_2^\ast\beta\rangle_{C_2} ,
\end{align}
where $\langle\, ,\, \rangle_X$ and $\langle\, ,\, \rangle_{C_i}$ denote the natural cup product pairings on $H^3(X,\Z)$ and $H^1(C_i,\Z)$, respectively.
Consider the morphism of integral Hodge structures
\begin{align} \label{eq:fast}
H^1(JX,\Z)=H^3(X,\Z) \longrightarrow 
H^1(C_1,\Z)\oplus H^1(C_2,\Z) ,\quad \alpha \mapsto (\Gamma_1^\ast \alpha,\Gamma_2^\ast \alpha) ,
\end{align}
and identify $\langle\, ,\, \rangle_X$ with the induced pairing on $H^1(JX,\Z)\cong H_1(JX,\Z)$.
By the correspondence between polarizable weight one Hodge structures and abelian varieties, \eqref{eq:fast} yields a unique morphism of abelian varieties 
$$
f\colon JC_1\times JC_2\longrightarrow JX \quad \quad \text{with dual}\quad \quad f^\vee\colon JX\longrightarrow JC_1\times JC_2 ,
$$
such that \eqref{eq:fast} is given by $f^\ast$.
Moreover, if we use \eqref{eq:H^i(A)=H_i(A)} to identify \eqref{eq:fast} with a map between homology, then it agrees with $f^\vee_\ast$. 
The symmetric bilinear pairing $\langle\, ,\, \rangle_{C_1}-\langle\, ,\, \rangle_{C_2}$ on
$$
H^1(C_1,\Z)\oplus H^1(C_2,\Z)=H^1(JC_1\times JC_2,\Z)\cong H_1(JC_1\times JC_2,\Z)= H_1(C_1,\Z)\oplus H_1(C_2,\Z)
$$ 
is unimodular, but not positive definite (unless $JC_2=0$); we refer to it as a quasi-polarization.
The compatibility in \eqref{eq:cup-product} says precisely that this quasi-polarization restricts to the pairing $\langle\, ,\, \rangle_X$ on 
$$
\im(f^\vee_\ast\colon H_1(JX,\Z)\longrightarrow H_1(JC_1,\Z)\oplus H_1(JC_2,\Z)).
$$
Since $\langle\, ,\, \rangle_X$ is positive definite, we may define  $B\subset JC_1\times JC_2$ as the abelian subvariety such that $H_1(B,\Z)=\im(f^\vee_\ast)^\perp$ is the orthogonal complement of $\im(f^\vee_\ast)$ with respect to the quasi-polarization  $\langle\, ,\, \rangle_{C_1}-\langle\, ,\, \rangle_{C_2}$.
(One can check that in fact $B$ is the connected component of $0$ of $\ker f$.)
The natural map $JX\times B\to JC_1\times JC_2$ then induces an isomorphism on $H_1$, hence is an isomorphism of unpolarized abelian varieties, as claimed in \ref{item:thm:min-class:2}.
 
It remains to prove item \ref{item:thm:min-class:1}.
As a consequence of \ref{item:thm:min-class:2}, we know that $f^\vee$ is an embedding: $X\cong \im(f^\vee)$.
The quasi-polarization $\langle \, ,\, \rangle_{C_1}-\langle\, ,\, \rangle_{C_2}$ on $H^1(JC_1\times JC_2,\Z)$ corresponds to the class 
$$
\pr_1^\ast[\Theta_{C_1}]-\pr_2^\ast[\Theta_{C_2}]\in \Lambda^2H^1(JC_1\times JC_2,\Z)= H^2(JC_1\times JC_2,\Z) .
$$
The compatibility in \eqref{eq:cup-product} says that it restricts to the principal polarization on $JX\cong \im(f^\vee)$.
That is: 
\begin{align} \label{eq:quasi-polarization-preserved}
(f^\vee)^\ast \pr_1^\ast[\Theta_{C_1}]- (f^\vee)^\ast \pr_2^\ast[\Theta_{C_2}]=[\Theta_X]\in H^2(JX,\Z).
\end{align}
If $i=2c$, then the isomorphism $H^{2c}(A,\Z)\cong H_{2c}(A,\Z)$ from \eqref{eq:H^i(A)=H_i(A)} sends $[\Theta]^c/c!$ to $[\Theta]^{g-c}/(g-c)!$, where $g=\dim A$. 
Therefore, \eqref{eq:quasi-polarization-preserved} is equivalent to 
$$
f_\ast[C_1\times 0]-f_\ast[0\times C_2]=[\Theta_X]^{g-1}/(g-1)! \in H_2(JX,\Z) .
$$
This proves item \ref{item:thm:min-class:1} in the theorem, and thus finishes the proof. 
\end{proof}

\begin{remark}
Beckmann and de Gaay Fortman \cite{BGF} proved the integral Hodge conjecture for curve classes on products of Jacobians of curves. 
Thus item~\ref{item:thm:min-class:1} in Theorem \ref{thm:coho-dec-diagonal-complex-threefolds} may in fact be seen as a general consequence of item \ref{item:thm:min-class:2}. 
Using this, Voisin \cite[Theorem~1.4]{voisin-min-class} showed that the integral Hodge conjecture for curve classes on an abelian variety $A$ holds precisely when $A$ is a summand of a product of Jacobians.
By \cite{EGFS}, this fails for very general $(A,\Theta)$ of dimension at least four; partial results had previously been obtained in \cite{GFS}.
\end{remark}

\subsection{Regular matroids.} \label{subsec:matroids}
A \emph{matroid} $(\underline{R},S)$ (or simply $\underline{R}$) on a finite ground set $S$ is a collection of subsets of $S$, called \emph{independent sets}, abstracting the properties of linearly independent subsets of vectors in a vector space. 
The notion was introduced by Whitney %\cite{whitney} 
in 1935, see \cite{oxley} for more details.

A basic example arises from a map $S \to V$, $s \mapsto v_s$, where $V$ is a vector space over a field $k$. 
The independent sets are precisely those $I \subset S$ for which $\{v_s \mid s \in I\}$ is linearly independent in $V$. 
A matroid $(\underline{R},S)$ is said to be \emph{realizable} over $k$ if such a representation exists, and \emph{regular} if it is realizable over every field.

\begin{definition}
An \emph{integral realization} of a matroid $(\underline{R},S)$ is a map $S \to V$ into a free $\Z$-module $V$ such that for any field $k$, the induced map $S \to V \otimes_\Z k$ realizes $(\underline{R},S)$. 
%Equivalently, for any $S' \subset S$, the image of the induced map $\Z^{S'} \to V$ is saturated.
\end{definition}

We frequently identify a realization $S \to V$ with the induced linear map $\Z^S \to V$. % and usually assume that this is surjective.
In Section \ref{subsec:cubic-threefolds}, the free $\Z$-module $V$ will often be the dual of another free $\Z$-module $U$.

Any matroid admitting an integral realization is necessarily regular. 
Conversely, every regular matroid admits an integral realization, because it can be realized by the column vectors of a totally unimodular matrix, i.e.\ a matrix all of whose minors lie in $\{0,\pm 1\}$ \cite[Theorem~6.6.3]{oxley}.

\begin{example}
{\rm 
Let $G$ be a finite connected graph with edge set $E$ and vertex set $V$. 
Fix an orientation of each edge;  
the simplicial chain complex of $G$ then takes the form
\[
0 \longrightarrow \Z^E \stackrel{\del}\longrightarrow \Z^V \longrightarrow 0 .
\]
Since $G$ is connected, the cokernel of $\del$ is free of rank $1$. 
Because the cohomology of this complex is unchanged under tensoring with any field $k$, the following constructions yield integral realizations, and hence regular matroids:
\begin{enumerate}[label={(\arabic*)}]
    \item The {\em graphic matroid} $M(G)$ is realized by
    $\Z^E \to \Z^E / H_1(G,\Z)$.
    Equivalently, one may take $\del\colon \Z^E \to U$, where $U \subset \Z^V$ is the kernel of the sum map $\Sigma\colon \Z^V \to \Z$.
    \item The {\em cographic matroid} $M^\ast(G)$ is realized by
    $\Z^E \to H_1(G,\Z)^\ast$, $e \mapsto x_e$,
    where $x_e\colon H_1(G,\Z)\to \Z$ is the restriction of the $e$-th coordinate projection $\Z^E \to \Z$ to $H_1(G,\Z)\subset \Z^E$.
\end{enumerate}
Both $M(G)$ and $M^\ast(G)$ are independent of the chosen edge orientations. 
}
\end{example}

A matroid $(\underline R,S)$ is called \emph{graphic} (resp.\ \emph{cographic}) if it is isomorphic to a graphic (resp.\ cographic) matroid. 
Tutte showed that  the graphic matroid $M(G)$ is isomorphic to a cographic matroid if and only if $G$ is planar, see \cite[Corollary~13.3.4]{oxley}. 

\begin{example}[Seymour--Bixby] \label{ex:R10}
The matroid $\underline R_{10}$ is defined by the columns of the totally unimodular matrix
\[
R_{10} = \begin{pmatrix} 
1 & 0 & 0 & 0 & 0 & -1 & 1 & 0 & 0 & 1 \\
0 & 1 & 0 & 0 & 0 & 1 & -1 & 1 & 0 & 0 \\
0 & 0 & 1 & 0 & 0 & 0 & 1 & -1 & 1 & 0 \\
0 & 0 & 0 & 1 & 0 & 0 & 0 & 1 & -1 & 1 \\
0 & 0 & 0 & 0 & 1 & 1 & 0 & 0 & 1 & -1 
 \end{pmatrix}.
\]
This matroid is regular, but neither graphic nor cographic. 
By Seymour's decomposition theorem \cite{seymour}, every regular matroid can be obtained (in a precise sense) from graphic and cographic matroids together with $\underline R_{10}$. 
\end{example}

\begin{example} \label{example:cographic}
Let $\pi\colon \mathcal C\to \Delta$ be a proper family of connected complex curves over the disc, with $\mathcal C$ regular, special fiber $C_0$ nodal with node set $S$, and general fiber $C_t$ smooth. 
The limit mixed Hodge structure on $H_1(C_t,\Z)$ gives a weight filtration $W_{-2}\subset W_{-1}\subset W_0=H_1(C_t,\Z)$. 
For each $s\in S$, choose a vanishing cycle $\gamma_s \in H_1(C_t,\Z)$, unique up to sign. % \cite[\S3]{voisinII}. 
Then $W_{-2}$ is generated by $\{\gamma_s\mid s\in S\}$ and we obtain a realization
\begin{align} \label{eq:realization-curves}
S \longrightarrow W_{-2}H_1(C_t,\Z),\quad s\mapsto \gamma_s .
\end{align} 
\end{example}

\begin{lemma} \label{lem:cographic-matroid}
The matroid with realization \eqref{eq:realization-curves} agrees with the cographic matroid of the dual complex $\Gamma(C_0)$.
\end{lemma}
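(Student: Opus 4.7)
The plan is to show that the matroid $(\underline R, S)$ and $M^\ast(\Gamma(C_0))$ have the same linear dependencies on $\Z^S$. Let $V$ denote the set of irreducible components of $\tilde C_0$, so that $\Gamma = \Gamma(C_0)$ is the graph with vertex set $V$ and edge set $S$. The cographic matroid $M^\ast(\Gamma)$ is realized by the quotient map $\Z^S \to \Z^S/\partial^\ast(\Z^V) \cong H^1(\Gamma,\Z) \cong H_1(\Gamma,\Z)^\ast$, where $\partial^\ast\colon \Z^V \to \Z^S$ is the coboundary of $\Gamma$. Therefore it suffices to identify the kernel of the map $\Q^S \to W_{-2}H_1(C_t,\Q)$, $e_s \mapsto \gamma_s$, with the coboundary subspace $\partial^\ast(\Q^V) \subset \Q^S$.

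For the inclusion $\partial^\ast(\Q^V) \subseteq \ker$, I would use Clemens' local description of the semi-stable degeneration $\mathcal C \to \Delta$: up to diffeomorphism, $C_t$ is obtained from $\tilde C_0$ by excising a small disk around each of the two preimages of every node and gluing in an annulus, the core of which represents the vanishing cycle $\gamma_s$ (up to sign). For each vertex $v \in V$, the embedded surface in $C_t$ corresponding to $\tilde C_{0,v}$, with the small disks removed, is an oriented subsurface whose oriented boundary is $\sum_{s \ni v} \pm \gamma_s$, with signs matching those of $\partial^\ast v \in \Z^S$. Hence $\sum_{s \ni v}\pm \gamma_s = 0$ in $H_1(C_t,\Q)$.

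Equality of kernels will then follow from a rank count. By the Clemens--Steenbrink description of the limit mixed Hodge structure on $H^1(C_t,\Q)$ for a semi-stable curve degeneration, $\operatorname{gr}^W_2 H^1(C_t,\Q) \cong H^1(\Gamma,\Q)(-1)$, and hence $\dim_\Q W_{-2}H_1(C_t,\Q) = b_1(\Gamma)$. On the other hand, since $C_0$ and therefore $\Gamma$ are connected, one has $\dim_\Q \Q^S/\partial^\ast(\Q^V) = |S|-|V|+1 = b_1(\Gamma)$ as well. Because the $\gamma_s$ span $W_{-2}H_1(C_t,\Q)$, the induced surjection $\Q^S/\partial^\ast(\Q^V) \to W_{-2}H_1(C_t,\Q)$ is a surjection of $\Q$-vector spaces of equal dimension, hence an isomorphism; this gives $\ker = \partial^\ast(\Q^V)$ and therefore the desired identification of matroids.

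The main obstacle is the sign compatibility in the boundary computation of the second paragraph: the orientation of $\partial \tilde C_{0,v}$ (after removing disks around node preimages) and the canonical orientation of the vanishing cycles both depend on auxiliary orientations of the edges of $\Gamma$, which must be fixed compatibly. An alternative route that avoids this bookkeeping is to read the coboundary relations directly off the Picard--Lefschetz formula, noting that the image of the monodromy logarithm $N$ on $H^1(C_t,\Q)$ lies in $W_0 H^1 \cong H^1(\Gamma,\Q)$ and that $N$ factors through $\operatorname{gr}^W_2 H^1 \cong H^1(\Gamma,\Q)(-1)$; dualizing this factorization yields the relations $\sum_{s \ni v}\pm \gamma_s = 0$ as a formal consequence of the combinatorics of $\Gamma(C_0)$.
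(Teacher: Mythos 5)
Your argument is correct, but it runs in the ``dual'' direction to the paper's. The paper identifies the \emph{target} of \eqref{eq:realization-curves}: the intersection pairing on $C_t$ identifies $W_{-2}H_1(C_t,\Z)$ with $(\operatorname{gr}^W_0H_1(C_t,\Z))^\ast\cong H_1(\Gamma(C_0),\Z)^\ast$, and under this identification $\gamma_s$ goes to the coordinate functional $x_s$, which is literally the standard realization of the cographic matroid from the example preceding the lemma. You instead identify the \emph{kernel} of $\Z^S\to W_{-2}H_1(C_t,\Z)$ with the cut space $\del^\ast(\Z^V)$, using the relation ``the vanishing cycles around a fixed component bound that punctured component in $C_t$'' for one inclusion and the Clemens--Steenbrink computation $\dim W_{-2}H_1=b_1(\Gamma(C_0))$ for the rank count. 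Your route avoids computing the intersection pairing between vanishing cycles and weight-zero classes, at the price of the sign bookkeeping you flag (which is genuinely needed but standard, and your Picard--Lefschetz workaround is fine; note also that loop edges of $\Gamma(C_0)$, i.e.\ self-nodes, contribute $0$ to $\del^\ast v$ on both sides, consistently). One point to make explicit: the lemma is used later as an \emph{integral} realization, so agreement of kernels over $\Q$ is not quite enough --- but your argument upgrades to $\Z$ for free, since the boundary relations hold integrally, the $\gamma_s$ generate $W_{-2}H_1(C_t,\Z)$ over $\Z$, and a surjection $\Z^S/\del^\ast(\Z^V)\to W_{-2}H_1(C_t,\Z)$ of free $\Z$-modules of equal rank $b_1(\Gamma(C_0))$ is an isomorphism; tensoring with any field then gives the matroid identification over every field, as required.
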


\begin{proof}
The intersection pairing on $C_t$ identifies $W_{-2}H_1(C_t,\Z)$ with $({\rm gr}^W_0 H_1(C_t,\Z))^\ast$. 
Moreover,
\[
{\rm gr}^W_0 H_1(C_t,\Z) \cong H_1(\Gamma(C_0),\Z),
\]
where $\Gamma(C_0)$ is the dual complex of $C_0$. 
This follows from the contraction $C_t \to C_0$, the homotopy equivalence $\tilde C_0 \to C_0$ (where nodes are replaced by edges), and the natural map $\tilde C_0 \to \Gamma(C_0)$ where components are contracted to the respective vertices, see e.g.\ \cite[Proposition~5.10]{EGFS-survey}. 
Thus \eqref{eq:realization-curves} identifies with
\[
S \longrightarrow H_1(\Gamma(C_0),\Z)^\ast,\quad s\mapsto x_s ,
\]
where $S$ corresponds to the edge set of $\Gamma(C_0)$ and $x_s$ is the restriction of the $s$-th coordinate function via the inclusion $H_1(\Gamma(C_0),\Z)\subset \Z^S$. 
(The dependence on orientations of the edges corresponds to choosing the sign of the vanishing cycle for each node.)
\end{proof}

\begin{theorem}[{Gwena \cite{gwena}}] \label{thm:Gwena}
Consider the Segre cubic threefold 
\begin{align} \label{eq:Segre-cubic}
Y_0\coloneq \left\{ \sum_{i=0}^5x_i=\sum_{i=0}^5x_i^3=0 \right\}\subset \CP^5_{\C}
\end{align} 
with $10$ ordinary double points.
Let $\mathcal Y\to \Delta$ be a smoothing of $Y_0$ over the unit disc $\Delta$ with regular total space $\mathcal Y$.
Let $S$ be the set of nodes of $Y_0$ and pick, for each $s\in S$, a vanishing cycle $\gamma_s\in H_3(Y_t,\Z)\cong H_1(JY_t,\Z)$ at a nearby fiber $Y_t$.
Then $S\to %{\rm gr}^
W_{-2}H_1(JY_t,\Z) $, $s\mapsto \gamma_s$,  
realizes the $\underline R_{10}$-matroid.
\end{theorem}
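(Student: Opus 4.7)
The plan is to reduce to a concrete $S_6$-equivariant computation. The symmetric group $S_6$ acts on $Y_0$ by coordinate permutations, acting transitively on the $10$ nodes (parametrized by unordered partitions $\{i,j,k\}|\{l,m,n\}$ of $\{1,\ldots,6\}$) and on the $15$ planes (parametrized by perfect matchings of $\{1,\ldots,6\}$, each given by an equation of the form $x_{i_1}+x_{j_1}=x_{i_2}+x_{j_2}=x_{i_3}+x_{j_3}=0$). This symmetry will reduce all matroid checks to a single orbit representative.

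First I would show that $\dim_{\Q} W_{-2}H_1(JY_t,\Q)=5$, i.e.\ that the degeneration of $JY_t$ is maximally unipotent. For this, take a small resolution $\widetilde Y_0\to Y_0$; this is a smooth rational threefold, so $H^3(\widetilde Y_0,\Q)=0$. Combined with the local ODP contributions, the Clemens--Schmid exact sequence applied to a semistable model of $\mathcal Y\to\Delta$ (obtained by blowing up the $10$ nodes, so that each exceptional divisor is a smooth quadric surface) forces $\dim W_{-2}=5$; equivalently, the limit of $JY_t$ is a $5$-dimensional algebraic torus. Membership $\gamma_s\in W_{-2}$ is standard for vanishing cycles of ODPs.

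Next I would determine the matroid circuits using the $15$ planes. Each plane $P\subset Y_0$ meets exactly four of the $10$ nodes; the proper transform of $P$ on the semistable model gives, via the Clemens--Schmid sequence, a relation
\[
\sum_{s\in S\cap P}\varepsilon_s^P\,\gamma_s=0\quad\text{in } W_{-2}H_1(JY_t,\Z),\qquad \varepsilon_s^P\in\{\pm 1\},
\]
with signs $\varepsilon_s^P$ determined by local orientations. This produces $15$ four-term circuits. By $S_6$-equivariance these must coincide with the circuits of the $S_6$-equivariant model
\[
\rho\colon\{i,j,k\}\,|\,\{l,m,n\}\longmapsto e_i+e_j+e_k-e_l-e_m-e_n\in\Z^6/\langle e_1+\cdots+e_6\rangle,
\]
which one verifies directly realizes $\underline R_{10}$: the rank equals $5$, the four-element circuits are exactly the perfect-matching quadruples (a direct calculation shows $\rho_{135|246}-\rho_{136|245}-\rho_{145|236}+\rho_{146|235}=0$ and $S_6$ spreads this over all 15 matchings), and after a change of basis one recovers the totally unimodular presentation of Example~\ref{ex:R10}.

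The main obstacle is the global sign bookkeeping, since each $\gamma_s$ is only well-defined up to sign and the assignments $\varepsilon_s^P$ must be consistent across all $15$ planes simultaneously. I would sidestep this by observing that the underlying matroid depends only on the set of circuits, not on the signs with which they close up; once the circuits are forced by $S_6$-invariance to be the $15$ perfect-matching quadruples of size four and the rank is $5$, comparison with $\underline R_{10}$ reduces to a finite check on a single orbit representative.
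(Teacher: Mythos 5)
The paper does not actually prove this statement; it is quoted from Gwena's paper [Gwe05], so there is no internal argument to compare against. Your overall strategy --- rank $5$ via a small resolution and Clemens--Schmid, four-term relations from the $15$ planes, and comparison with the partition model $\rho$ --- is essentially the right route (and essentially Gwena's), and your combinatorial checks are correct: each plane contains exactly $4$ nodes, the four partitions on the plane of the matching $\{1,2\},\{3,4\},\{5,6\}$ are the ones you list, the displayed relation holds, and $R_{10}$ does have exactly $15$ four-element circuits.

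There are, however, two genuine gaps. First, the step ``by $S_6$-equivariance these must coincide with the circuits of the model $\rho$'' is not an argument: equivariance only tells you that the matroid is $S_6$-invariant, and there are several $S_6$-invariant rank-$5$ matroids on the $10$ partitions (e.g.\ $U_{5,10}$). What you actually need is that the kernel of $\Z^S\to W_{-2}H_1(JY_t,\Z)$ is, after tensoring with $\Q$, exactly the span of the $15$ signed plane relations, and that this span coincides, up to resigning the $\gamma_s$, with the relation space of $\rho$. This is precisely where the sign bookkeeping you propose to ``sidestep'' re-enters: the matroid is determined by the $\Q$-span of the vectors $\sum_{s\in P}\varepsilon_s^P[s]\in\Q^{10}$, and both the dimension and the position of that span depend on the signs $\varepsilon_s^P$. (Rank $5$ forces the span to be at most $5$-dimensional, which constrains the signs, but you still must show that every admissible sign assignment is equivalent to the model's.) It is only \emph{after} the matroid has been identified that circuits become sign-independent. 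Second, the way the theorem is used in the paper is through \emph{integral} realizations (the same matroid over every field, in particular over $\F_2$, which is what the $\Z_{(2)}$-splitting arguments require). Your model vectors satisfy $\rho_{ijk}\equiv 2(e_i+e_j+e_k)$ modulo $\sum_t e_t$, so they all vanish in $\bigl(\Z^6/\langle\textstyle\sum_t e_t\rangle\bigr)\otimes\F_2$; you must therefore replace the target by the sublattice the $\rho_{ijk}$ generate, check that the realization is integral (equivalently, totally unimodular) there, and verify that this sublattice is identified with $W_{-2}H_1(JY_t,\Z)$ with the $\gamma_s$ as the images. As written, your argument only pins down the matroid over $\Q$.
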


\begin{remark} \label{rem:bilinear-form}
Let $\pi\colon \mathcal A^\star\to \Delta^\star$ be a smooth projective family of principally polarised abelian varieties. % with general fiber $(A_t,\Theta)$.
Let $T\colon H_1(A_t,\Z)\to H_1(A_t,\Z)$ be the associated monodromy operator and assume that $N\coloneq T-\id$ is nilpotent. 
The associated monodromy bilinear form on ${\rm gr}^W_0H_1(A_t,\Z)$ is given by $x\otimes y\mapsto \Theta(Nx,y)$, where $\Theta$ denotes the bilinear pairing induced by the principal polarization on $A_t$, see \cite[Definition 2.12]{EGFS}.
If $\pi$ arises as the relative (intermediate) Jacobian of the example in Example \ref{example:cographic} (resp.\ Theorem \ref{thm:Gwena}), then the Picard--Lefschetz formula shows $B=\sum_{s\in S}x_s^2$, where $S$ is the set of nodes and $x_s\coloneq \Theta(\gamma_s,-)$ denotes the linear form on ${\rm gr}^W_0H_1(A_t,\Z)$ induced by the vanishing cycle $\gamma_s$ at $s$ and the polarization.
One can show that $B$ is positive definite and so the Delaunay decomposition (\cite[Definition 6.1]{EGFS-survey}) with respect to $B$ allows to recover the vanishing cycles (or rather the associated hyperplane arrangement given by $\{x_s=0\}_{s\in S}$ inside ${\rm gr}^W_0H_1(A_t,\R)$).
\end{remark}

\begin{corollary}[Gwena \cite{gwena}] \label{cor:gwena}
    The very general cubic threefold $Y\subset \CP^4_\C$ is irrational.
\end{corollary}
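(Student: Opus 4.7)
The plan is to combine Theorem~\ref{thm:Gwena} with the Clemens--Griffiths irrationality criterion and the closedness of rationality in smooth projective families in characteristic zero \cite{NS,KT}. Specifically, suppose for contradiction that the very general cubic threefold over $\C$ is rational. Then by \cite{NS,KT} every smooth cubic threefold over $\C$ is rational. I would fix a smoothing $\pi\colon\mathcal Y\to\Delta$ of the Segre cubic $Y_0$ from \eqref{eq:Segre-cubic} with $\mathcal Y$ regular, so that all fibers $Y_t$ with $t\in\Delta^\ast$ are smooth and rational.

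By the Clemens--Griffiths theorem recalled at the beginning of Section~\ref{sec:coho-decomposition}, for every $t\in\Delta^\ast$ the intermediate Jacobian $(JY_t,\Theta_{Y_t})$ is isomorphic as a principally polarized abelian variety to a product $\prod_{i=1}^r(JC_{i,t},\Theta_{C_{i,t}})$ of Jacobians of smooth projective curves. The decomposition into indecomposable principally polarized factors is unique up to reordering and depends holomorphically on $t$, so after passing to a finite \'etale cover of $\Delta^\ast$ to remove the permutation ambiguity and applying the Torelli theorem, I would extract holomorphic families $\mathcal C_i^\ast\to\Delta^\ast$ of smooth curves whose relative Jacobians recover the $i$-th factors. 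By semistable reduction, each $\mathcal C_i^\ast$ extends, after further finite base change, to a family $\mathcal C_i\to\Delta$ with regular total space and nodal special fiber $C_{i,0}$.

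Lemma~\ref{lem:cographic-matroid} identifies the matroid realized by the vanishing cycles of $JC_{i,t}$ in $W_{-2}H_1(JC_{i,t},\Z)$ with the cographic matroid $M^\ast(\Gamma(C_{i,0}))$ of the dual complex of $C_{i,0}$. Because the principal polarization and the nilpotent monodromy operator decompose orthogonally under the product decomposition of $JY_t$, the vanishing cycle matroid of $JY_t=\prod_iJC_{i,t}$, with the pairing of Remark~\ref{rem:bilinear-form}, is the direct sum $\bigoplus_i M^\ast(\Gamma(C_{i,0}))$ of cographic matroids. But a direct sum of cographic matroids is itself cographic (it is the cographic matroid of the disjoint union of the underlying graphs), while Theorem~\ref{thm:Gwena} identifies the vanishing cycle matroid of $JY_t$ with $\underline R_{10}$, which is not cographic by Example~\ref{ex:R10}. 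This contradiction proves the corollary.

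The main obstacle is producing the holomorphic families $\mathcal C_i^\ast\to\Delta^\ast$ from the pointwise product decomposition of $(JY_t,\Theta_{Y_t})$. One has to combine uniqueness of the decomposition into indecomposable principally polarized factors (which makes it a holomorphic decomposition of the variation of polarized integral Hodge structures on $H^3(Y_t,\Z)$) with the Torelli theorem, whose fibers over the Jacobian locus in $\mathcal A_g$ are finite; the permutation and hyperelliptic ambiguities can be absorbed into a finite \'etale base change of $\Delta^\ast$. Once these families are in place, the argument reduces to the combinatorial fact that $\underline R_{10}$ is not cographic.
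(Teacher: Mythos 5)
Your overall strategy is the paper's: degenerate to the Segre cubic, use Clemens--Griffiths to produce (after finite base change) a family of curves over $\Delta^\star$ whose Jacobians are isomorphic as principally polarized abelian varieties to the intermediate Jacobians $JY_t$, and derive a contradiction from the fact that $\underline R_{10}$ is not cographic. The preparatory steps (invoking \cite{NS,KT} so that all nearby smooth fibers are rational, extracting holomorphic curve families via uniqueness of the decomposition into indecomposable principally polarized factors and Torelli, semistable reduction) are fine and correspond to the paper's ``up to a finite base change, we may assume that there is a family of curves $\pi_2\colon\mathcal C\to\Delta$ with fiberwise isomorphisms $JY_t\cong JC_t$''.

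The gap is in the final comparison. You treat ``the vanishing cycle matroid of $JY_t$'' as a well-defined invariant, computing it once from the curve degenerations (a direct sum of cographic matroids) and once from the cubic degeneration ($\underline R_{10}$ by Theorem \ref{thm:Gwena}), and then declare a contradiction. But vanishing cycles are attached to a chosen degeneration, not to the family over $\Delta^\star$: the two degenerations produce two a priori unrelated finite collections of vectors in $W_{-2}H_1(JY_t,\Z)$, and nothing in your argument forces their matroids to coincide. What bridges them in the paper's proof is that the monodromy bilinear form $B(x,y)=\Theta(Nx,y)$ on ${\rm gr}^W_0 H_1$ \emph{is} intrinsic to the isomorphic families over $\Delta^\star$, and Picard--Lefschetz exhibits $B=\sum_s x_s^2$ over each degeneration's own set of vanishing cycles. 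Since a positive definite form can in general be written as a sum of squares of integral linear forms in inequivalent ways (e.g.\ $2x^2+2y^2=(x+y)^2+(x-y)^2$), one still needs the nontrivial input of Remark \ref{rem:bilinear-form}: the Delaunay decomposition of $B$ recovers the hyperplane arrangement $\{x_s=0\}$, hence the matroid, from $B$ alone. Once this step is inserted, your argument closes and coincides with the paper's.
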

\begin{proof}
We use that a rational smooth complex projective threefold $Y$ satisfies $(JY,\Theta_Y)\cong (JC,\Theta_C)$ for some smooth (possibly disconnected) curve $C$, cf.\ \cite{clemens-griffiths}.
Consider the family $\pi_1 \colon \mathcal Y\to \Delta$ from Theorem \ref{thm:Gwena} and assume that its very general fiber is rational.
Then,  up to a finite base change, we may assume that there is a family of curves $\pi_2\colon \mathcal C\to \Delta$ together with fiberwise isomorphisms $JY_t\cong JC_t$ for all $t\neq 0$.
This isomorphism preserves the respective principal polarizations.
Therefore, the monodromy bilinear forms associated to the restrictions of $\pi_1$ and $\pi_2$ to $\Delta^\star$ coincide, see
Remark \ref{rem:bilinear-form}.
Comparing the respective Delaunay decompositions, we find that the hyperplane arrangements induced by the vanishing cycles on both families agree and so do the respective matroids.
By Lemma \ref{lem:cographic-matroid} and Theorem \ref{thm:Gwena}, we conclude that $\underline R_{10}$ is cographic, which is a contradiction.
\end{proof}

\subsection{Cubic threefolds.} \label{subsec:cubic-threefolds}
In joint work with Engel and de Gaay Fortman \cite{EGFS-survey,EGFS}, we proved:

\begin{theorem}[\cite{EGFS}] \label{thm:EGFS}
Let $Y\subset \CP^4_\C$ be a very general cubic threefold.
Then the minimal curve class $[\Theta_Y]^4/4!\in H_2(JY,\Z)$ is not algebraic.
\end{theorem}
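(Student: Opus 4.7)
The plan is to argue by contradiction. Suppose that for a very general cubic threefold $Y\subset \CP^4_\C$ the minimal class $[\Theta_Y]^4/4!\in H_2(JY,\Z)$ is algebraic. For very general $Y$, the Hodge structure on $H^3(Y,\Q)$ is simple, so Hodge classes in $H_2(JY,\Q)$ are generated by the minimal class, and this assumption is equivalent to the integral Hodge conjecture for curve classes on $JY$. By Voisin's characterization \cite[Theorem 1.4]{voisin-min-class} (which rests on the Beckmann--de Gaay Fortman theorem \cite{BGF} discussed in the remark following Theorem \ref{thm:coho-dec-diagonal-complex-threefolds}), this implies that $(JY,\Theta_Y)$ is a summand of a product of Jacobians as in item \ref{item:thm:min-class:2} of Theorem \ref{thm:coho-dec-diagonal-complex-threefolds}: there exist smooth projective (possibly disconnected) curves $C_1, C_2$, an abelian variety $B$, and an isomorphism $JY\times B\cong JC_1\times JC_2$ of unpolarized abelian varieties, such that the quasi-polarization $\pr_1^\ast [\Theta_{C_1}]-\pr_2^\ast[\Theta_{C_2}]$ restricts to $[\Theta_Y]$ on $JY$.

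Next I would specialize $Y$ to the Segre cubic $Y_0$ of Theorem \ref{thm:Gwena} via a regular smoothing $\mathcal Y\to \Delta$. After a finite base change, the summand relation spreads out over $\Delta^\star$, and by the semistable reduction theorem applied to the varying curves $C_1, C_2$, we obtain a corresponding degeneration of polarized variations of mixed Hodge structures. In particular, taking weight-$-2$ pieces of the limit mixed Hodge structures and using compatibility with monodromy, one finds an embedding
\[
W_{-2}H_1(JY_t,\Z) \hookrightarrow W_{-2}H_1(JC_{1,t},\Z)\oplus W_{-2}H_1(JC_{2,t},\Z)
\]
which is isometric with respect to the monodromy bilinear forms of Remark \ref{rem:bilinear-form}. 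By Gwena's Theorem \ref{thm:Gwena} the source realizes $\underline R_{10}$, while by Lemma \ref{lem:cographic-matroid} the target realizes the cographic matroid of the dual complex of the stable limit of $C_1\sqcup C_2$. Mimicking the end of the proof of Corollary \ref{cor:gwena}, one then compares the Delaunay decompositions defined by the positive-definite monodromy forms on both sides; this identifies $\underline R_{10}$ as a distinguished matroid-theoretic subobject of a cographic matroid.

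The main obstacle, and the combinatorial heart of \cite{EGFS}, is to convert this into a clean matroid-theoretic contradiction. Concretely, one has to show that the realization of $\underline R_{10}$ that arises by restricting a cographic realization to an orthogonal sublattice—the one coming from the isogeny factor $JY\hookrightarrow JC_1\times JC_2$—is forced, by the positivity and unimodularity of the monodromy form, to be itself cographic (or at least to have $\underline R_{10}$ as a minor of some cographic matroid, after decomposing according to Seymour's theorem \cite{seymour}). This contradicts the Seymour--Bixby characterization (Example \ref{ex:R10}), according to which $\underline R_{10}$ is neither graphic nor cographic and appears as an irreducible building block in Seymour's decomposition. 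The delicate point is the compatibility of orthogonal sublattice restrictions with the cographic structure, and keeping track of how the Delaunay decomposition dictated by the polarization constrains the combinatorics; once this is carried out, the contradiction completes the proof that the minimal class cannot be algebraic.
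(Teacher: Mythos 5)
Your overall strategy---degenerate to the Segre cubic, play Gwena's $\underline R_{10}$ realization off against the cographic matroids coming from degenerating curves, and derive a combinatorial contradiction---is indeed the strategy of \cite{EGFS}. But there are two genuine gaps. First, the one-parameter smoothing $\mathcal Y\to\Delta$ is not enough. A single smoothing only produces the \emph{total} monodromy form $B=\sum_{s\in S}y_s^2$ on the cubic side and $\sum_{e}a_e x_e^2$ on the curve side, and the splitting of $H_1$ then yields one identity relating these sums. What the combinatorial input (Definition \ref{def:d-Lambda-splitting-general}) requires is a separate quadratic form $Q_s=\sum_e a_{se}x_e^2$ for \emph{each} node $s$ of the Segre cubic, orthogonal for the decomposition and restricting to $d\cdot y_s^2$ on the $JY$-summand. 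Extracting this node-by-node data forces one to work with the full $10$-dimensional universal deformation $\mathrm{Def}_{Y_0}=\Delta^S$, which is precisely what \cite{EGFS} does and what the paper flags as the source of the main technical difficulties. Relatedly, your appeal to Delaunay decompositions does not go through here: in Corollary \ref{cor:gwena} that comparison works because the two monodromy forms literally coincide under an isomorphism of principally polarized abelian varieties, whereas in your situation the form on the curve side only restricts to (a multiple of) the form on the cubic side on a direct summand, and the Delaunay decomposition of a positive definite form does not simply restrict to a sublattice. So the step ``this identifies $\underline R_{10}$ as a subobject of a cographic matroid'' is not justified as written.

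Second, you correctly identify the combinatorial statement as the heart of the matter, but you leave it as an obstacle rather than an argument, and your description of how the contradiction arises is off target: the relevant result is Theorem \ref{thm:splitting->cographic} (a regular matroid admitting a quadratic $\Z_{(2)}$-splitting into a cographic matroid is itself cographic), whose proof in \cite{EGFS} goes through Albanese-type and covering-space reductions and ultimately Tutte's excluded-minor characterization of cographic matroids ($\underline R_{10}$ contains $M(K_{3,3})$ as a minor), not through Seymour's decomposition theorem. A smaller point: the paper's reduction is cleaner than your appeal to Voisin's summand theorem---since $JY$ is a Prym, $2[\Theta_Y]^4/4!$ is already algebraic, so one only needs to rule out an effective curve of odd degree $m$, which produces a $\Z_{(2)}$-splitting with nonnegative coefficients directly; your two-curve signed version would force you to handle a non-positive quasi-polarization $\Theta_{C_1}-\Theta_{C_2}$ in the monodromy comparison.
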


By Voisin's Theorem \ref{thm:coho-dec-diagonal-complex-threefolds}, we obtain the following, which implies Theorem \ref{thm:EGFS-intro} from the introduction.

\begin{corollary}
A very general cubic threefold $Y\subset \CP^4_\C$ does not admit a cohomological decomposition of the diagonal.
\end{corollary}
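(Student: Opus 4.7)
The plan is to argue by contradiction, reducing the statement to a direct application of Voisin's criterion (Theorem \ref{thm:coho-dec-diagonal-complex-threefolds}) combined with the algebraicity obstruction of Theorem \ref{thm:EGFS}. Suppose for contradiction that a very general cubic threefold $Y \subset \CP^4_\C$ admits a cohomological decomposition of the diagonal.

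First I would check the hypotheses of Theorem \ref{thm:coho-dec-diagonal-complex-threefolds}. A smooth cubic threefold has $K_Y = \OO_Y(-2)$, hence is Fano and in particular rationally connected by \cite{campana,KMM}. With $\dim JY = h^{1,2}(Y) = 5 =: g$, Theorem \ref{thm:coho-dec-diagonal-complex-threefolds}\ref{item:thm:min-class:1} then yields smooth projective (possibly disconnected) curves $C_1, C_2$ and a morphism $f\colon JC_1 \times JC_2 \to JY$ such that
\[
\tfrac{[\Theta_Y]^{g-1}}{(g-1)!} \;=\; \tfrac{[\Theta_Y]^{4}}{4!} \;=\; f_\ast[C_1\times 0] - f_\ast[0\times C_2] \;\in\; H_2(JY,\Z),
\]
where $C_i \hookrightarrow JC_i$ via the Abel--Jacobi embedding. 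In particular, the minimal curve class $[\Theta_Y]^4/4!$ is represented by an algebraic cycle on $JY$.

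This conclusion is in direct contradiction with Theorem \ref{thm:EGFS}, which states that the minimal curve class of the intermediate Jacobian of a very general cubic threefold is not algebraic. Hence the original assumption must fail, and $Y$ does not admit a cohomological decomposition of the diagonal.

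The main obstacle in this line of argument is not present in the corollary itself but rather in its two inputs. Voisin's Theorem \ref{thm:coho-dec-diagonal-complex-threefolds} has already been proved in Section \ref{subsec:coho-dec-diag-2} via the cohomological decomposition machinery of Section \ref{subsec:coho-dec-diag-1}, so it may be freely invoked. The genuine difficulty lies in Theorem \ref{thm:EGFS}, whose proof relies on the matroid-theoretic framework developed in Section \ref{subsec:matroids}: one degenerates $Y$ to a suitable nodal cubic threefold, computes the monodromy bilinear form on the graded piece of the limit mixed Hodge structure of the intermediate Jacobian (cf.\ Remark \ref{rem:bilinear-form}), and uses the non-cographic nature of the resulting regular matroid (in the style of Theorem \ref{thm:Gwena}) to rule out algebraicity of the minimal class. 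Granting that deeper result, the present corollary is a formal consequence.
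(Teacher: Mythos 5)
Your proposal is correct and follows exactly the paper's route: the corollary is deduced by contraposition from Voisin's Theorem \ref{thm:coho-dec-diagonal-complex-threefolds} (a cohomological decomposition forces algebraicity of the minimal class $[\Theta_Y]^4/4!$) combined with Theorem \ref{thm:EGFS} (non-algebraicity of that class for very general $Y$). Your verification of the rational connectedness hypothesis and the computation $g=5$ are correct but routine additions to the same argument.
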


The intermediate Jacobian $JY$ of a cubic threefold is a Prym variety, so $2\cdot [\Theta_Y]^4/4!$ is algebraic.  
For very general $Y$, every curve $C\subset JY$ has class $[C]=m[\Theta]^4/4!$ and it suffices to show that $m$ is always even.
We then consider a morphism $f\colon JC\to JY$ from the Jacobian of a smooth (not necessarily connected) curve $C$ such that
\[
f_\ast[C]=m[\Theta]^4/4!\in H_2(JY,\Z).
\] 
Assume for a contradiction that $m$ is odd.  
The dual map $f^\vee\colon JY\to JC$, defined using the principal polarizations, then satisfies
$$
(f^\vee)^\ast \Theta_C=m\Theta_Y \quad \quad \text{and} \quad \quad f\circ f^\vee=m\cdot \id_{JY} .
$$ 
Localizing at $2$, we let $\Lambda=\Z_{(2)}$. Since $m$ is invertible in $\Lambda$, $\frac{1}{m}f^\vee_\ast$ splits $f_\ast$, and we get a decomposition
\begin{align} \label{eq:H_1(JC)-decomposition}
H_1(JC,\Lambda)\cong H_1(JY,\Lambda)\oplus \ker(f_\ast\colon H_1(JC,\Lambda)\to H_1(JY,\Lambda)).
\end{align}
Thus the $\Lambda$-homology of $JY$ is a direct summand of the $\Lambda$-homology of the Jacobian of a curve, cf.\ Theorem \ref{thm:coho-dec-diagonal-complex-threefolds}.

The rough idea is: if \eqref{eq:H_1(JC)-decomposition} holds for very general $Y$, then, over a suitable cover of the moduli space of cubics, it extends to families and hence it is preserved by monodromy.
In other words, it would, up to some multiples introduced by ramification indices, force the monodromy of cubic threefolds to appear as a summand of the monodromy of curves.
To make this precise, note that by Theorem \ref{thm:Gwena} and Remark \ref{rem:bilinear-form}, the local monodromy near the Segre cubic is governed by the $\underline R_{10}$-matroid.
Conversely, the monodromy of curves is by Lemma \ref{lem:cographic-matroid}  and Remark \ref{rem:bilinear-form} governed by cographic matroids. 
This motivates the following precise matroidal analogue of the splitting in \eqref{eq:H_1(JC)-decomposition}; for more details on how to arrive there, see \cite[\S 1.4 and \S 2.5]{EGFS}.

\begin{definition}[{\cite[Definition 1.7]{EGFS}}] \label{def:d-Lambda-splitting-general}
Let $(\underline R,S)$ and $(\underline M,E)$ be regular matroids with integral realizations $S\to U^\ast$, $s\mapsto y_s$, and $E\to V^\ast$, $e\mapsto x_e$.  
Let $\Lambda$ be a ring and $d>0$.  
A \emph{quadratic $\Lambda$-splitting of level $d$} of $(\underline R,S)$ in $(\underline M,E)$ consists of an embedding $U_\Lambda\hookrightarrow V_\Lambda$ and a decomposition
\[
V_\Lambda=U_\Lambda\oplus U',\quad \text{
for some $U'\subset V_\Lambda$,}
\] 
together with a matrix $(a_{se})\in \Z_{\ge0}^{S\times E}$ such that $\forall s\in S$, the quadratic form $Q_s\coloneq \sum_{e}a_{se}x_e^2$ satisfies:
\begin{enumerate}[label={(\arabic*)}]
\item the above decomposition is orthogonal with respect to $Q_s$;
\item the restriction of $Q_s$ to $U_\Lambda$ equals $d\cdot y_s^2$.
\end{enumerate}
\end{definition}
 
The proof of Theorem \ref{thm:EGFS} follows from combining the next two theorems (see \cite[Theorems 1.8 and 1.9]{EGFS}), the first stated here only for intermediate Jacobians of cubics.

\begin{theorem}[\cite{EGFS}] \label{thm:algebraic->splitting}
Assume that the minimal class $[\Theta]^4/4!\in H_2(JY,\Z)$ on the intermediate Jacobian of a very general cubic threefold $Y\subset \CP^4_{\C}$ is algebraic.  
Then there exists $d\geq 1$ such that $\underline R_{10}$ admits a quadratic $\Z_{(2)}$-splitting of level $d$ into a cographic matroid.
\end{theorem}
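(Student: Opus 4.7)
The strategy is to translate the algebraicity of the minimal class into a quadratic splitting by spreading over a degeneration to the Segre cubic and combining the monodromy computations of Theorem~\ref{thm:Gwena} and Lemma~\ref{lem:cographic-matroid}. First, algebraicity of $[\Theta_Y]^4/4!$ means this class lies in the image of the cycle class map $\CH_1(JY)\to H_2(JY,\Z)$. Since on a very general $JY$ every irreducible curve class is a positive integer multiple $m_i[\Theta_Y]^4/4!$, the gcd of the $m_i$ equals $1$, and a suitable $\Z$-linear combination of such curves realizes the minimal class; in particular, some odd multiplier appears. I would gather the relevant curves $C_i$ into a disjoint union $C$, pass to the product Jacobian, and, following the recipe sketched in the excerpt after Theorem~\ref{thm:EGFS}, produce a morphism $f\colon JC\to JY$ and its dual $f^\vee\colon JY\to JC$ such that $(f^\vee)^\ast\Theta_C=m\Theta_Y$ and $f\circ f^\vee=m\cdot\id_{JY}$ for some odd $m$. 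Localizing at $2$ with $\Lambda=\Z_{(2)}$ makes $m$ invertible, so $\tfrac{1}{m}f^\vee_\ast$ splits $f_\ast$ on $H_1(-,\Lambda)$ and produces a direct-summand embedding $H_1(JY,\Lambda)\hookrightarrow H_1(JC,\Lambda)$ which is orthogonal with respect to the polarization pairings up to the factor $m$.

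Next, I would spread this situation to an algebraic family. Because $Y$ is very general, a standard specialization argument realizes the construction above fiberwise over a Zariski-open $B$ of the moduli space of cubic threefolds, yielding a family $\mathcal{C}\to B$ of curves together with a map of relative Jacobians $\mathcal{JC}\to \mathcal{JY}$. I would then pick a one-parameter disc $\Delta\to\bar B$ meeting the generic locus and specializing at $0$ to the Segre cubic $Y_0$, and apply a finite base change $\tilde\Delta\to\Delta$ (ramified at $0$ with a controlled index) so that $\mathcal{C}$ extends over $\tilde\Delta$ to a semistable model with nodal central fiber $C_0$. Over the punctured disc $\tilde\Delta\setminus\{0\}$ the splitting of $H_1$ of Jacobians extends to a splitting of local systems preserved by monodromy; passing to the graded pieces of the weight filtration on the limit mixed Hodge structures produces a direct-sum decomposition $V_\Lambda=U_\Lambda\oplus U'$, where $U=\mathrm{gr}^W_0 H_1(JY_t,\Z)$ and $V=\mathrm{gr}^W_0 H_1(JC_t,\Z)$.

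Finally, I would identify the matroids and match the quadratic forms. Theorem~\ref{thm:Gwena} asserts that the vanishing cycles $\gamma_s$ at the ten nodes of $Y_0$, transformed via the principal polarization into linear forms $y_s$ on $U$, give an integral realization $S\to U^\ast$ of $\underline{R}_{10}$. By Lemma~\ref{lem:cographic-matroid}, the vanishing cycles of $\mathcal{C}\to\tilde\Delta$ at the nodes of $C_0$ give an integral realization $E\to V^\ast$ of the cographic matroid of the dual complex $\Gamma(C_0)$, with linear forms $x_e$ on $V$. For each $s\in S$ I would consider the local one-parameter subdegeneration smoothing only the $s$-th node: by Picard--Lefschetz and Remark~\ref{rem:bilinear-form}, its monodromy quadratic form on $U$ is a positive multiple of $y_s^2$, while on the curve side it becomes $Q_s=\sum_{e\in E}a_{se}x_e^2$ with $a_{se}\in\Z_{\geq 0}$ recording the local intersection multiplicities of the associated loop with the discriminant divisors of $\mathcal{C}$. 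The compatibility $(f^\vee)^\ast\Theta_C=m\Theta_Y$, applied to these monodromy forms, yields $Q_s|_{U_\Lambda}=d\cdot y_s^2$ with $d$ equal to $m$ times the ramification factor, verifying both conditions of Definition~\ref{def:d-Lambda-splitting-general}.

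The main obstacle I anticipate lies in the second step: producing a semistable extension of $\mathcal{C}$ over a disc through $Y_0$ with a suitable base change, so that the curve-side monodromy can be read off integrally and yields non-negative integer entries $a_{se}$ together with a positive level $d$. A secondary subtlety is the parity claim in the first step: algebraicity produces only an integer combination of curves, and careful bookkeeping is needed to extract from it a single morphism $f\colon JC\to JY$ with odd $m$ rather than merely an even one. In particular, this does not reduce formally to Theorem~\ref{thm:coho-dec-diagonal-complex-threefolds}, which already starts from the stronger hypothesis of a cohomological decomposition of the diagonal rather than from mere algebraicity of the minimal class.
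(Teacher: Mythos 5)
Your first step (extracting a morphism $f\colon JC\to JY$ with $f_\ast[C]=m[\Theta_Y]^4/4!$ for some odd $m$, dualizing, and localizing at $2$ to split $H_1(JC,\Lambda)\cong H_1(JY,\Lambda)\oplus\ker f_\ast$) matches the argument sketched in Section \ref{subsec:cubic-threefolds}. The gap is in your degeneration step. You fix a \emph{one-parameter} disc $\Delta\to\bar B$ through the Segre cubic and extend $\mathcal C$ semistably over a base change $\tilde\Delta$. Over such an arc you see only a single monodromy operator, whose associated bilinear form is the \emph{sum} $\sum_{s\in S}x_s^2$ over all ten nodes (Remark \ref{rem:bilinear-form}). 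But Definition \ref{def:d-Lambda-splitting-general} demands, for \emph{each} node $s\in S$ separately, a quadratic form $Q_s=\sum_e a_{se}x_e^2$ on the curve side that restricts to $d\cdot y_s^2$ on $U_\Lambda$ and for which the decomposition is orthogonal. Your third step then appeals to ``the local one-parameter subdegeneration smoothing only the $s$-th node,'' but such subdegenerations live in the ten-dimensional universal deformation $\mathrm{Def}_{Y_0}=\Delta^S$, not on the arc you constructed; you never produce a family of curves, with its splitting, over (a cover of) the punctured polydisc, nor control how that family degenerates along each of the ten branches of the discriminant. This is exactly the point the paper flags: the proof in \cite{EGFS} works with the universal deformation over $\Delta^S$ rather than with one-parameter degenerations as in Corollary \ref{cor:gwena}, and it is this multi-parameter extension that causes the ``serious technical difficulties.'' A one-parameter comparison of the composite forms $\sum_s y_s^2$ and $\sum_e a_e x_e^2$ (which is what your setup actually yields) is strictly weaker than the per-node splitting required, and is essentially only enough for the Clemens--Griffiths/Gwena-type argument where $JY_t\cong JC_t$ as polarized abelian varieties.

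A secondary issue: even granting the multi-parameter family, you assert $d=m$ times ``the ramification factor'' without addressing whether the cover of moduli over which $\mathcal C$ is defined has the \emph{same} ramification index along all ten branches of the discriminant, which is needed to get a single level $d$ valid for every $s$ simultaneously. Your parity bookkeeping in the first step is fine ($\gcd(m_i)=1$ forces some odd $m_i$, and one curve with odd $m$ suffices), and you are right that the argument does not factor through Theorem \ref{thm:coho-dec-diagonal-complex-threefolds}; but as written the proposal does not prove the theorem.
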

 
\begin{theorem} [\cite{EGFS}] \label{thm:splitting->cographic}
A regular matroid $(\underline R,S)$ admits a quadratic $\Z_{(2)}$-splitting of some level $d\ge 1$ into a cographic matroid if and only if $(\underline R,S)$ is cographic.
\end{theorem}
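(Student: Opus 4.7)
The ``if'' direction is essentially tautological: if $(\underline R, S)$ is itself cographic with integral realization $s\mapsto y_s\in U^\ast$, then take $(\underline M,E)\coloneq (\underline R,S)$, $V\coloneq U$, $U'=0$, $d=1$, and $a_{se}\coloneq \delta_{se}$; condition (1) of Definition~\ref{def:d-Lambda-splitting-general} is vacuous and (2) is immediate.

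For the ``only if'' direction, my plan is to argue by contradiction, using Seymour's decomposition theorem (cf.\ Example~\ref{ex:R10}) as the structural backbone. Suppose $(\underline R, S)$ is regular but not cographic, and that it nonetheless admits a quadratic $\Z_{(2)}$-splitting of some level $d$ into a cographic matroid $(M^\ast(G), E)$. By Seymour's theorem, every regular matroid is built from graphic matroids, cographic matroids, and copies of $\underline R_{10}$ via $1$-, $2$-, and $3$-sums, so $\underline R$ must contain a non-cographic building block as a summand: either a copy of $\underline R_{10}$ or a graphic matroid $M(H)$ with $H$ non-planar, for which by Tutte's criterion the minimal excluded minors are $M(K_5)$ and $M(K_{3,3})$. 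The plan then has two steps: (a) show that the quadratic $\Z_{(2)}$-splitting property descends through Seymour's $k$-sum operations---cographic matroids being themselves closed under $k$-sums, so that the target $(M^\ast(G), E)$ inherits a compatible decomposition which can be matched, via the embedding $U_\Lambda\hookrightarrow V_\Lambda$ and the orthogonality of the $Q_s$, with a splitting of each summand (possibly at the cost of enlarging $d$); and (b) rule out the existence of such a splitting for each of the three minimal bad matroids $\underline R_{10}$, $M(K_5)$, and $M(K_{3,3})$. For (b), the rank-one condition $\sum_e a_{se}(x_e|_{U_\Lambda})^2 = d\cdot y_s^2$ in $\operatorname{Sym}^2 U_\Lambda^\ast$ combined with the $s$-wise orthogonality places rigid constraints on the nonnegative integer matrix $(a_{se})$; the symmetries of the matrix $R_{10}$ from Example~\ref{ex:R10} together with its totally unimodular structure should, after a careful case analysis, yield an $\F_2$-linear incompatibility, which is precisely the reason one localizes at the prime $2$.

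The main obstacle I anticipate is step (b): extracting a sharp and uniform $\F_2$-obstruction to expressing $d\cdot y_s^2$ simultaneously for all $s$ as a nonnegative integer combination of squares $(x_e|_{U_\Lambda})^2$, valid uniformly in the choice of target cographic matroid and ambient embedding $U_\Lambda\hookrightarrow V_\Lambda$. A secondary difficulty is the descent along $2$- and $3$-sums: although cographic matroids are stable under Seymour's $k$-sums, ensuring that the splitting data $(U, U', (a_{se}))$ itself splits coherently along such a decomposition of $\underline R$ requires a careful choice of integral realization on each side so that the pairing and orthogonality structures align, which I expect to be technical but essentially formal once the framework is in place. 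Combining (a) and (b) with the existence of a non-cographic summand then produces the desired contradiction.
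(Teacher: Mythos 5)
Your ``if'' direction is fine. But the ``only if'' direction, which is where the entire content of the theorem lies, is a plan with two unclosed gaps, and the reduction strategy is also organized suboptimally. First, routing through Seymour's decomposition theorem is a detour that does not avoid the work you would need anyway: even granting that cographic matroids are closed under $1$-, $2$-, and $3$-sums (which you assert but do not check --- note the $3$-sum is not self-dual, so this is not automatic from the graphic case), your non-cographic building block is either $\underline R_{10}$ or a non-planar graphic matroid $M(H)$, and to get from $M(H)$ down to $M(K_5)$ or $M(K_{3,3})$ you must descend the splitting property along \emph{minors}. Once you have minor-descent, Tutte's excluded-minor characterization of cographic regular matroids applies directly to $\underline R$ and Seymour's theorem is unnecessary; this is in fact how \cite{EGFS} proceeds, and it also disposes of $\underline R_{10}$ for free since $\underline R_{10}$ has $M(K_{3,3})$ as a minor. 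Neither the descent along $k$-sums nor the descent along minors is ``essentially formal'': the splitting datum involves an arbitrary ambient cographic matroid $M^\ast(G)$, an arbitrary embedding $U_\Lambda\hookrightarrow V_\Lambda$, and an orthogonality condition that must be propagated through deletion and contraction; in \cite[\S\S 5--6]{EGFS} this sequence of reductions is carried out using Albanese varieties of graphs and covering-space arguments, and it occupies a substantial part of the proof.

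Second, and more seriously, the core of the theorem --- the nonexistence of a quadratic $\Z_{(2)}$-splitting of $M(K_5)$ and $M(K_{3,3})$ into \emph{any} cographic matroid, for \emph{any} level $d$ and \emph{any} embedding --- is left entirely open. You correctly identify this as the main obstacle, but you offer only the expectation that ``a careful case analysis'' of the identity $\sum_e a_{se}(x_e|_{U_\Lambda})^2 = d\cdot y_s^2$ will produce an $\F_2$-incompatibility. This is not a finite check: the target graph $G$ ranges over all graphs, so one needs a structural obstruction valid for all cographic targets, and no candidate obstruction is exhibited. Until both the descent step and the excluded-minor step are actually proved, the argument does not establish the theorem.
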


To prove Theorem \ref{thm:algebraic->splitting}, we work with the universal deformation $\pi\colon \mathcal Y\to \mathrm{Def}_{Y_0}=\Delta^S$ of the Segre cubic threefold over a 10-dimensional polydisc (and not with $1$-parameter degenerations as in Corollary \ref{cor:gwena}).
The usage of degenerations over higher-dimensional bases distinguishes the argument in \cite{EGFS} from previous degeneration arguments in the subject, see e.g.\ \cite{kollar,voisin,CTP,Sch-duke}.
This leads to serious technical difficulties and we refer to \cite[\S 1.6]{EGFS} for an outline of the proof of both theorems.

Note that Theorem \ref{thm:splitting->cographic} is a purely combinatorial result; it is proven in \cite[§§5–7]{EGFS} via a sequence of reductions, exploiting ideas from algebraic geometry (Albanese varieties, see \cite[\S 5]{EGFS}) and topology (covering spaces, see \cite[\S 6]{EGFS}).
It eventually relies on a theorem of Tutte who showed that a regular matroid is cographic if and only if it does not contain the graphic matroids associated to the utility graph $K_{3,3}$ or the complete graph $K_5$  as a minor.
The case of $\underline R_{10}$ is covered by this, as it has $M(K_{3,3})$ as a minor.

\section*{Acknowledgments}
I am fortunate to work in a subject where many mathematicians have contributed to exciting advancements over the last decade and I thank all of them. 
I am grateful to J.\ Bowden, K.\ Hulek, J.C.\ Ottem, C.\ Voisin and O.\ Wittenberg for comments and discussions related to this text, 
and to J.-L.\ Colliot-Th\'el\`ene, P.\ Engel, O.\ de Gaay Fortman, and J.\ Lange for reading parts of this survey carefully. 
This project has received funding from the European Research Council (ERC) under the European Union’s Horizon 2020 research and innovation programme under grant agreement No 948066 (ERC-StG RationAlgic).

\end{document}